\makeatletter \@addtoreset{equation}{section} \makeatother
\newtheorem{theorem}{Theorem}[section]
\newtheorem{proposition}{Proposition}[section]
\newtheorem{lemma}{Lemma}[section]
\newtheorem{remark}{Remark}[section]
\begin{document}
\title{New type of solutions for the critical Lane-Emden system}

\author{Wenjing Chen\footnote{Corresponding author.}\ \footnote{E-mail address:\, {\tt wjchen@swu.edu.cn} (W. Chen), {\tt hhuangxiaomeng@126.com} (X. Huang).}\  \ and Xiaomeng Huang\\
\footnotesize  School of Mathematics and Statistics, Southwest University,
Chongqing, 400715, P.R. China}

\date{ }
\maketitle

\begin{abstract}
{In this paper, we consider the critical Lane-Emden system
\begin{align*}
\begin{cases}
-\Delta u=K_1(y)v^p,\quad y\in \mathbb{R}^N,&\\
-\Delta v=K_2(y)u^q,\quad  y\in \mathbb{R}^N,&\\
u,v>0,
\end{cases}
\end{align*}
where $N\geq 5$, $p,q\in (1,\infty)$ with $\frac{1}{p+1}+\frac{1}{q+1}=\frac{N-2}{N}$, $K_1(y)$ and $K_2(y)$ are positive radial potentials. Under suitable conditions on $K_1(y)$ and $K_2(y)$, we construct a new family of solutions to this system, which are centred at points lying on the top and the bottom circles of a cylinder.}

\smallskip
\emph{\bf Keywords:} critical Lane-Emden systems; Lyapunov-Schmidt reduction; infinitely many solutions.

\smallskip
\emph{\bf 2020 Mathematics Subject Classification:} 35B33, 35B40, 35J47.

\end{abstract}

\section{Introduction and statement of main result}\label{introduction}

In this work,  we consider the following critical Lane-Emden system 
\begin{align}\label{le}
\begin{cases}
-\Delta u=K_1(y)v^p,\quad   y\in \mathbb{R}^N,&\\
-\Delta v=K_2(y)u^q,\quad   y\in \mathbb{R}^N,&\\
u,v>0,
\end{cases}
\end{align}
where $N\geq 5$, $K_1(y)$ and $K_2(y)$ are positive radial potentials, $p,q\in (1,\infty)$ and $(p, q)$ is a pair of positive numbers lying on the critical hyperbola
\begin{equation}\label{pq}
\frac{1}{p+1}+\frac{1}{q+1}=\frac{N-2}{N}.
\end{equation}
In this paper, without loss of generality, we may assume that $p\leq \frac{N+2}{N-2}\leq q$.

When $u=v$ and $K_1(y)=K_2(y)=K(y)$, system (\ref{le}) is reduced to the following scalar equation
\begin{align}\label{psc}
-\Delta u=K(y)u^{\frac{N+2}{N-2}},\  u>0,\  y\in \mathbb{R}^N,\  u\in D^{1,2}(\mathbb{R}^N),
\end{align}
which can be transformed by the prescribed scalar curvature problem on $\mathbb{S}^N$ through the stereo-graphic projection. There are many works revelent to (\ref{psc}), see \cite{caony, dengly, li, noussaury, yan} and reference therein.
Wei and Yan \cite{weiy} showed that if $K(y)$ is radially symmetric and $K(|y|)$ has a local maximum at $r_0>0$, then (\ref{psc}) has infinitely many non-radial solutions with large number of bubbles lying near the sphere $|y|=r_0$ and the distance between different bubbles can be made arbitrary small.
Subsequently, Guo et al. \cite{guompy} proved a non-degeneracy result for the positive multi-bubbling solutions constructed in \cite{weiy} and used this non-degeneracy result to glue $n$-bubbles, whose centers lie in the circle $|y|=r_0$ in the $(y_3,y_4)$-plane to the $k$-bubbling solution described in \cite{weiy}.
Under more general assumptions on $K(y)$, i.e., $K(y)=K(|y'|,y'')$, where $y=(|y'|,y'')\in \mathbb{R}^2\times \mathbb{R}^{N-2}$, by combining a finite reduction argument and local Pohozaev type of identities, Peng, Wang and Wei\cite{pengww} proved problem (\ref{psc}) has infinitely many solutions.
Recently, Duan, Musso and Wei\cite{duanmw} constructed a family of solutions to problem (\ref{psc}) and the solutions are centred at points lying on the top and the bottom circles of a cylinder.

Consider the standard Lane-Emden system 
\begin{align}\label{le0}
\begin{cases}
-\Delta u=v^p,\quad  \text{ in } \Omega,&\\
-\Delta v=u^q,\quad  \text{ in } \Omega,&\\
u=v=0,       \quad   \text{ on } \partial\Omega,&
\end{cases}
\end{align}
where $\Omega$ is either a smooth bounded domain or the whole space $\mathbb{R}^N$ with $N\geq 3$ and $p,q\in (1,\infty)$. Such
problems have been intensively studied.
For $\Omega=\mathbb{R}^N$, by the concentration compactness principle, Lions \cite{lions} obtained a positive least energy solution to (\ref{le0}) in $X_{p,q}(\mathbb{R}^N):=\dot{W}^{2,\frac{p+1}{p}}(\mathbb{R}^N)\times \dot{W}^{2,\frac{q+1}{q}}(\mathbb{R}^N)$ when $p,q$ satisfy condition (\ref{pq}).
Later, Alvino et al.\cite{alvinolt}, Wang\cite{wang}, Hulshof, van der Vorst\cite{hulshofv} respectively proved the radially symmetry, monotonicity(i.e, decreasing in the radial variable), uniqueness of this positive least energy solution.
Moreover, Frank, Kim and Pistoia\cite{frankkp} deduced that all least energy solutions to (\ref{le0}) satisfying (\ref{pq}) are non-degenerate. For more result about the positive or sign-changing solutions to (\ref{le0}) on $\mathbb{R}^N$, we can refer the
readers to \cite{chenlo, clapps} and references therein.

If $\Omega\subset\mathbb{R}^N$ is a smooth bounded domain, some efforts have been devoted to the study of (\ref{le0}). We
refer the readers to \cite{alvescm, bonheuredr, clementfm, figueiredo, figueiredof, sirakov} and the references therein for the existence, positivity and uniqueness of solutions for (\ref{le0}).
Kim and Moon\cite{kimm} examined asymptotic behavior of solutions of (\ref{le0}) on a smooth bounded convex domain $\Omega$ in $\mathbb{R}^N$ provided that the pair $(p,q)$ satisfies $p\leq q$, and $(p, q)$ in the subcritical region is close to the critical hyperbola.
Based the non-degeneracy result obtained in \cite{frankkp}, Kim and Pistoia\cite{kimp} proved the existence of a one-point blowing-up solution to (\ref{le0}) in general domains and multiple blowing-up solutions to (\ref{le0}) in contractible domains.
Subsequently, Guo, Hu and Peng\cite{guohp} revisited the multiple blowing-up solutions constructed in \cite{kimp} and proved its non-degeneracy.
Recently, Jin and Kim\cite{jink} were interested in (\ref{le0}), when $\Omega$ is a
smooth bounded domain with a small hole, they proved that the system admits a family of positive
solutions that concentrate around the center of the hole as $\epsilon\rightarrow 0$, obtaining a concrete qualitative
description of the solutions as well.
Guo, Liu and Peng\cite{guolpboun} considered the supercritical problem for the Lane-Emden system (\ref{le0}) and  proved that for some suitable bounded smooth domains $\Omega\subset\mathbb{R}^N$, there exist positive solutions with layers concentrating along one or several $k$-dimensional sub-manifolds of $\partial\Omega$ as $(p, q)$ is close to the critical hyperbola.

In contrast to the Dirichlet problem (\ref{le0}), there have been very few results to the Lane-Emden system with Neumann conditions. Using dual method, Salda\~{n}a and Tavares\cite{saldanat} obtained the existence of least energy nodal solutions in the subcritical regime. For critical case, Pistoia, Schiera and Tavares\cite{pistoiast} showed that, under suitable conditions on $(p, q)$, least-energy (sign-changing) solutions exist, and they are classical.
By the Lyapunov-Schmidt reduction argument, Guo and Peng\cite{guop} studied the Lane-Emden system with Neumann conditions, and investigated the existence of multiple concentrated solutions in the unit ball for $(p, q)$ in supercritical regime.

In this paper, we study (\ref{le}). As far as we know, inspired by \cite{weiy}, under suitable conditions on $K_1, K_2$ and the certain range of the exponents $p, q$, Guo, Liu and Peng\cite{guolp} constructed an unbounded sequence of non-radial positive vector solutions, whose energy can be made arbitrarily large. However, no existence result with solutions for (\ref{le}) centring at points lying on the top and the bottom circles of a cylinder  is known in the literature. Therefore, our objective is to  construct such type solution.

We first assume that $N\geq 5$, $K_1$ and $K_2$ are positive and radial functions satisfying the following conditions:

($K_1$) There exists a constant $r_0>0$ such that for $r\in (r_0-\delta,r_0+\delta)$,
\begin{equation*}
K_1(r)=1-c_1|r-r_0|^{m_1}+O(|r-r_0|^{m_1+\theta_1}).
\end{equation*}

($K_2$)There exists a constant $r_0>0$ such that for $r\in (r_0-\delta,r_0+\delta)$,
\begin{equation*}
K_2(r)=1-c_2|r-r_0|^{m_2}+O(|r-r_0|^{m_2+\theta_2}),
\end{equation*}
where $\delta$, $\theta_1$, $\theta_2>0$ are small constants, $c_1$, $c_2>0$, and $m_1, m_2\in [2,N-2)$.
We define $m=\min\{m_1,m_2\}$ and $m$ satisfies
\begin{align}\label{m0}
m\in
\begin{cases}
[2,N-2), & \text{ if } N=5,6,\\[2mm]
\left(\frac{2(N-2)^2}{2N-3},N-2\right), & \text{ if } N\geq 7.
\end{cases}
\end{align}
Moreover, $p$, $q$, $m$ should further satisfy
\begin{align}\label{p+}
&\max\Bigg\{\frac{N+1}{N-2},\frac{N(N-2)}{(N-2)^2-(N-2-m)},
\frac{m+1}{2m}+\frac{\tau}{N-2}+\frac{N+2}{2(N-2)}+\frac{(N-3)(N-2-m)}{2m(N-1)(N-2)}
\Bigg\}
\nonumber\\
&<p\leq \frac{N+2}{N-2}.
\end{align}

\begin{align}\label{q+}
q>\max\Bigg\{\frac{N+2}{N-2}+\frac{N-2-m}{(N-2)^2},
\frac{N+2}{N-2}+\frac{2(N-2-m)}{m(N-1)(N-2)}
\Bigg\}.
\end{align}

\begin{align}\label{m+}
\max\left\{\frac{2N^2-4N+4}{5N-7-2(N-1)\tau}, \frac{2(N-2)^2}{N^2-2N-1}\right\}
<m<\frac{(N-2)[N^2-N+2-2(N-1)\tau]}{N^2-4N+5},
\end{align}
where $\tau$ is defined in (\ref{tau}) below.

\begin{remark}\label{remark1}
We make some explanations for the extra condition on $p$, $q$, $m$.
In (\ref{p+}), the reason for $p>\max\Bigg\{\frac{N+1}{N-2},\frac{N(N-2)}{(N-2)^2-(N-2-m)}\Bigg\}$ is the same to \cite{guolp}.
The condition $p>\frac{m+1}{2m}+\frac{\tau}{N-2}+\frac{N+2}{2(N-2)}+\frac{(N-3)(N-2-m)}{2m(N-1)(N-2)}$ is due to the fact that we want the term $kO\left(r\|\phi_1\|_{*}^2\right)$ controlled by $k\frac{1}{k^{\frac{m(N-2)}{N-2-m}+\frac{N-3}{N-1}+\theta}}$ in Lemma \ref{fh}.
(\ref{q+}) is determined by the compatibility of $\frac{\partial I(W_1,W_2)}{\partial h}$ (see Lemma \ref{iwh}).
(\ref{m+}) is derived by the condition on $p$, which is not only (\ref{p+}), since (\ref{p+}) was simplified.
\end{remark}

Our main result in this paper can be stated as follows:
\begin{theorem}\label{thm1.10}
Suppose that $N\geq 5$ and $p$, $q$, $m$ satisfy (\ref{pq}), (\ref{p+}), (\ref{q+}), (\ref{m0}) and (\ref{m+}) respectively. If $K_i(r)$ satisfies $(K_i)$, then problem (\ref{le}) has infinitely many non-radial solutions.
\end{theorem}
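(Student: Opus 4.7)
The plan is to apply a Lyapunov--Schmidt finite-dimensional reduction in the spirit of Duan--Musso--Wei \cite{duanmw}, adapted to the asymmetric system (\ref{le}). Fix a large integer $k$ and place $2k$ bubble centres on the top and bottom circles of a cylinder:
\[
x_j^{\pm}=\bigl(\sqrt{r^2-h^2}\,\cos\tfrac{2\pi(j-1)}{k},\;\sqrt{r^2-h^2}\,\sin\tfrac{2\pi(j-1)}{k},\;\pm h,\;0,\ldots,0\bigr),\quad j=1,\ldots,k,
\]
where $r$ is close to $r_0$, $h$ is a small parameter controlling the top--bottom separation, and the scaling $\mu$ is chosen of order $k^{-(N-2)/(N-2-m)}$. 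The approximate solution is taken to be $(W_1,W_2)=\bigl(\sum_{j,\pm}U_{\mu,x_j^{\pm}},\;\sum_{j,\pm}V_{\mu,x_j^{\pm}}\bigr)$, where $(U_{\mu,\xi},V_{\mu,\xi})$ denotes the unique \cite{hulshofv} and non-degenerate \cite{frankkp} ground-state pair of the unweighted critical Lane--Emden system centred at $\xi$ with concentration rate $\mu$. The whole analysis is carried out in the closed subspace of pairs invariant under the cyclic rotations of order $k$ acting on $(y_1,y_2)$, under the reflection $y_3\mapsto-y_3$, and under $O(N-3)$ acting on $(y_4,\ldots,y_N)$; this couples the $2k$ bubbles and reduces the free parameters to the triple $(r,h,\mu)$.

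Write $(u,v)=(W_1+\phi_1,W_2+\phi_2)$ and substitute into (\ref{le}). The linearised system couples $\phi_1$ and $\phi_2$ via $pK_1W_2^{p-1}$ and $qK_2W_1^{q-1}$; since $p\leq\tfrac{N+2}{N-2}\leq q$ the two components decay at different rates, so one introduces two weighted $L^{\infty}$-norms $\|\cdot\|_*$ and $\|\cdot\|_{**}$ tailored to the polynomial decay of $U_{\mu,\xi}$ and $V_{\mu,\xi}$ respectively. Project out the three-dimensional approximate kernel spanned, in the symmetric class, by the scaling and translation derivatives $(\partial_\mu W_1,\partial_\mu W_2)$, $(\partial_r W_1,\partial_r W_2)$, $(\partial_h W_1,\partial_h W_2)$. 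Uniform-in-$k$ invertibility of the projected linearised operator follows from the non-degeneracy of the ground-state pair via a standard blow-up--contradiction argument; the projected nonlinear problem is then solved by contraction mapping, yielding $(\phi_1,\phi_2)$ with $\|\phi_1\|_*+\|\phi_2\|_{**}$ bounded by an explicit negative power of $k$. The ranges (\ref{p+})--(\ref{m+}) are arranged so that the nonlinear remainder is strictly subleading, which is what makes the contraction close; this is the content of Lemma~\ref{fh}.

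The parameters $(r,h,\mu)$ are then fixed by finding a critical point of the reduced energy $\Psi(r,h,\mu)=I(W_1+\phi_1,W_2+\phi_2)$, where $I$ is the (dual) variational functional associated with (\ref{le}). A careful expansion of $\Psi$ produces, to leading order, $2k$ times the sum of the pure-bubble energy $A_0$, a potential-cost term of order $c_1\mu^{m_1}+c_2\mu^{m_2}$ coming from $(K_1),(K_2)$, a top--bottom interaction of order $(\mu/h)^{N-2}$, and an intra-circle interaction of order $(k\mu/r_0)^{N-2}$, plus strictly smaller remainders. The ranges (\ref{p+}), (\ref{q+}), (\ref{m0}), (\ref{m+}) are chosen so that these leading terms dominate all corrections and so that the resulting algebraic system admits a strict interior critical point $(r_*,h_*,\mu_*)$ for every $k$ large; the delicate $h$-derivative is handled by Lemma~\ref{iwh}, which is precisely where (\ref{q+}) enters. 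Each admissible $k$ yields a geometrically distinct solution, so (\ref{le}) has infinitely many non-radial solutions.

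The main obstacle will be the asymmetric linear theory forced by $q>\tfrac{N+2}{N-2}\geq p$: since $V_{\mu,\xi}$ decays more slowly than $U_{\mu,\xi}$, intra-$V$ interactions are significantly larger than intra-$U$ ones, and the two weighted norms behave very differently. Keeping the mixed contributions $\int K_1(W_2+\phi_2)^{p-1}\phi_2\phi_1$ and $\int K_2(W_1+\phi_1)^{q-1}\phi_1^2$, together with their first derivatives in $(r,h,\mu)$, uniformly controlled in $k$ is exactly where the sharp thresholds in (\ref{p+}), (\ref{q+}) and (\ref{m+}) are needed, as foreshadowed by Remark~\ref{remark1}.
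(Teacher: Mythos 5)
Your strategy is the same as the paper's: rescale so the bubbles have width of order $k^{-\frac{N-2}{N-2-m}}$ (the paper does this via (\ref{muk})--(\ref{lemu}) and keeps a bounded dilation parameter $\Lambda$ in place of your free $\mu$, which is only a reparametrization), build $W_1,W_2$ from $2k$ copies of the unique, non-degenerate ground state centred on the top and bottom circles, prove uniform invertibility of the projected linearized operator in the weighted norms (\ref{norm1})--(\ref{norm2}) (Lemma \ref{lpL}, Proposition \ref{lpLc}), solve the projected nonlinear problem by contraction (Proposition \ref{cmphi}), and then expand the reduced energy $F(r,h,\Lambda)$ and its $\Lambda$- and $h$-derivatives (Propositions \ref{energyex}, \ref{flamda}, \ref{fh}, \ref{iw}--\ref{iwh}) to fix the parameters. (A minor remark: under (\ref{p+}) both components of the ground state decay like $|y|^{2-N}$, so the paper measures $\phi_1$ and $\phi_2$ in the same norm $\|\cdot\|_*$; your component-dependent norms are not needed.)

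Two points in your sketch would have to be repaired. First, the top--bottom interaction is not of order $\left(\mu/h\right)^{N-2}$ per bubble: at the optimal scale one has $kh\to\infty$, so each top bubble interacts at comparable strength with roughly $kh$ bubbles of the bottom circle, and the correct per-bubble contribution is of order $k\mu^{N-2}/h^{N-3}$; this is exactly the term $\frac{B_5k}{r^{N-2}h^{N-3}\sqrt{1-h^2}}$ in Proposition \ref{iw}, and balancing it against the $h$-dependence of the intra-circle term is what forces $h\approx B' k^{-\frac{N-3}{N-1}}$ in (\ref{hb}) and the choice of $\wp_k$ in (\ref{wpk}). With your nearest-neighbour order the $h$-balance (and hence the configuration set and constant $B'$) would come out wrong, and the ansatz would not be self-consistent since the resulting $h$ still satisfies $kh\to\infty$. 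Second, ``admits a strict interior critical point'' glosses over the actual structure: to leading order $F$ is maximized in $h$ and in the dilation parameter but minimized in $r$ (through the positive $(\mu-r)^2$ terms), so the reduced critical point is a saddle and cannot be found by straight maximization or minimization. The paper obtains it through a min--max over maps equal to the identity on the boundary $|\mu-r|=k^{-\vartheta_2}$, together with a gradient-flow invariance argument for $S_k$ (Proposition \ref{barf} and the proof of Theorem \ref{thm1.1}); this is precisely why the $C^1$ expansions of $\partial F/\partial\Lambda$ and $\partial F/\partial h$ are needed. You do note that derivative estimates are required, but your plan should specify such a topological (min--max or degree) argument rather than assert the existence of the critical point.
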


Before we close this introduction, let us outline the main idea in the proof of Theorem \ref{thm1.10}.

For any integer $k$, we denote
\begin{equation}\label{muk}
\mu=k^{\frac{N-2}{N-2-m}}.
\end{equation}
Using the transformation
\begin{equation*}
u(y)\mapsto \mu^{-\frac{N}{q+1}}u\left(\frac{y}{\mu}\right),\quad
v(y)\mapsto \mu^{-\frac{N}{p+1}}v\left(\frac{y}{\mu}\right),
\end{equation*}
system (\ref{le}) becomes
\begin{align}\label{lemu}
\begin{cases}
-\Delta u=K_1\left(\frac{y}{\mu}\right)v^p,\quad  y\in \mathbb{R}^N,&\\
-\Delta v=K_2\left(\frac{y}{\mu}\right)u^q,\quad  y\in \mathbb{R}^N,&\\
u,v>0, \quad
(u,v)\in \dot{W}^{2,\frac{p+1}{p}}(\mathbb{R}^N)\times \dot{W}^{2,\frac{q+1}{q}}(\mathbb{R}^N).
\end{cases}
\end{align}

It is well known that a positive ground state $(U,V)$ to the following system was found in \cite{lions}
\begin{align}\label{limiteq}
\begin{cases}
-\Delta U=|V|^{p-1}V,\text{ in } \mathbb{R}^N,&\\
-\Delta V=|U|^{q-1}U, \text{ in } \mathbb{R}^N,&\\
(U,V)\in \dot{W}^{2,\frac{p+1}{p}}(\mathbb{R}^N)\times \dot{W}^{2,\frac{q+1}{q}}(\mathbb{R}^N),
\end{cases}
\end{align}
where $N\geq 3$ and $(p,q)$ satisfy (\ref{pq}).

In view of \cite{hulshofv} and \cite{wang}, the positive ground state $(U_{0,1}, V_{0,1})$ of (\ref{limiteq}) is unique with $U_{0,1}(0)=1$ and the family of functions

\begin{align*}
(U_{\xi,\lambda}(y),V_{\xi,\lambda}(y))
=\left(\lambda^{\frac{N}{q+1}}U_{0,1}(\lambda(y-\xi)), \lambda^{\frac{N}{p+1}}V_{0,1}(\lambda(y-\xi))\right)
\end{align*}
for any $\lambda>0$, $\xi\in \mathbb{R}^N$ also solves system (\ref{limiteq}).

Define the symmetric Sobolev space:
\begin{align*}
H_s=\Bigg\{&(u,v)\in \dot{W}^{2,\frac{p+1}{p}}(\mathbb{R}^N)\times \dot{W}^{2,\frac{q+1}{q}}(\mathbb{R}^N), u,v \text{ are even in } y_h, h=2, \cdots, N,
\nonumber\\
&\quad u\left(\sqrt{y_1^2+y_2^2}\cos\theta,\sqrt{y_1^2+y_2^2}\sin \theta,y_3,y''\right)
\nonumber\\
&\quad=u\left(\sqrt{y_1^2+y_2^2}\cos\left(\theta+\frac{2j\pi}{k}\right),\sqrt{y_1^2+y_2^2}\sin \left(\theta+\frac{2j\pi}{k}\right),y_3,y''\right),
\nonumber\\
&\quad v\left(\sqrt{y_1^2+y_2^2}\cos\theta,\sqrt{y_1^2+y_2^2}\sin \theta,y_3,y''\right)
\nonumber\\
&\quad=v\left(\sqrt{y_1^2+y_2^2}\cos\left(\theta+\frac{2j\pi}{k}\right),\sqrt{y_1^2+y_2^2}\sin \left(\theta+\frac{2j\pi}{k}\right),y_3,y''\right)
\Bigg\},
\end{align*}
where $y''\in \mathbb{R}^{N-3}$, $\theta=\arctan \frac{y_2}{y_1}$.

For any large integer $k>0$, let
\begin{align*}
\begin{cases}
\overline{x}_j=r\left(\sqrt{1-h^2}\cos\frac{2(j-1)\pi}{k},\sqrt{1-h^2}\sin\frac{2(j-1)\pi}{k},
h,\mathbf{0}\right),\quad j=1, \cdots, k,&\\
\underline{x}_j=r\left(\sqrt{1-h^2}\cos\frac{2(j-1)\pi}{k},\sqrt{1-h^2}\sin\frac{2(j-1)\pi}{k},
-h,\mathbf{0}\right),\quad j=1, \cdots, k,
\end{cases}
\end{align*}
where $\mathbf{0}$ is the zero vector in $\mathbb{R}^{N-3}$ and $h$, $r$ are positive parameters. We define
\begin{align*}
W_1(y):=W_{1,r,h,\Lambda}(y)=\sum_{j=1}^kU_{\overline{x}_j,\Lambda}+\sum_{j=1}^kU_{\underline{x}_j,\Lambda},\\
W_2(y):=W_{2,r,h,\Lambda}(y)=\sum_{j=1}^kV_{\overline{x}_j,\Lambda}+\sum_{j=1}^kV_{\underline{x}_j,\Lambda}.
\end{align*}

In this paper, we assume $N\geq 5$ and $(r,h,\Lambda)\in \wp_k$, where $\wp_k$ is defined by
\begin{align}\label{wpk}
\wp_k=\Bigg\{(r,h,\Lambda)|\ &r\in \left[k^{\frac{N-2}{N-2-m}}-\hat{\sigma},k^{\frac{N-2}{N-2-m}}+\hat{\sigma}\right], h\in \left[\frac{B'}{k^{\frac{N-3}{N-1}}}\left(1-\hat{\sigma}\right),
\frac{B'}{k^{\frac{N-3}{N-1}}}\left(1+\hat{\sigma}\right)\right],
\nonumber\\
&\Lambda\in [\Lambda_0-\hat{\sigma},\Lambda_0+\hat{\sigma}]
\Bigg\}.
\end{align}
Here $\Lambda_0$, $B'$ are the constants in (\ref{lamda0}), (\ref{hb}) and
$\hat{\sigma}$ is a fixed small number  independent of $k$.

We will prove Theorem \ref{thm1.10} by proving the following result:
\begin{theorem}\label{thm1.1}
Let $N\geq 5$ and $p$, $q$, $m$ satisfy (\ref{pq}), (\ref{p+}), (\ref{q+}), (\ref{m0}) and (\ref{m+}) respectively. Suppose that $K_i(|y|)$ satisfies $(K_i)$ for $i=1,2$. Then there exists a large integer $k_0$, such that for each integer $k\geq k_0$, problem (\ref{lemu}) has a solution $(u_k,v_k)$ of the form
\begin{align*}
(u_k,v_k)=\left(W_{1,r_k,h_k,\Lambda_k}(y)+\phi_1,W_{2,r_k,h_k,\Lambda_k}(y)+\phi_2\right),
\end{align*}
where $(\phi_1,\phi_2)\in H_s$ and $(r_k,h_k,\Lambda_k)\in \wp_k$,
\begin{align*}
\|(\phi_1,\phi_2)\|_{*}=o(1) \ \text{ as } k\rightarrow \infty.
\end{align*}
\end{theorem}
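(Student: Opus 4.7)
The plan is to implement a Lyapunov--Schmidt reduction scheme on the symmetric space $H_s$. Writing $u = W_1 + \phi_1$, $v = W_2 + \phi_2$ and plugging into (\ref{lemu}) yields a coupled linear system for $(\phi_1, \phi_2)$ with source terms measuring the failure of $(W_1, W_2)$ to solve (\ref{lemu}) plus higher-order nonlinear interactions. Working in the weighted norm $\|\cdot\|_*$ (designed to capture the bubble decay rate), I would first solve the projected problem where $(\phi_1, \phi_2)$ is constrained to be orthogonal to the kernel directions of the linearised limit operator at each bubble, which in the symmetric class $H_s$ reduce to the three families associated with the parameters $\Lambda$, $r$, $h$. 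The non-degeneracy of the ground state $(U_{0,1}, V_{0,1})$ proved by Frank, Kim and Pistoia \cite{frankkp}, together with a standard contradiction argument excluding loss of compactness at more than one bubble, gives uniform invertibility of the projected linear operator.

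Once the linear theory is in place, a contraction mapping on a small ball in $\|\cdot\|_*$ produces, for every $(r,h,\Lambda) \in \wp_k$, a unique pair $(\phi_1(r,h,\Lambda), \phi_2(r,h,\Lambda))$ solving the projected equation, with a quantitative bound $\|(\phi_1,\phi_2)\|_* \to 0$ of sharp polynomial rate in $k$. The sharpness is essential: the exponents are dictated by balancing the size of the error $\|\ell\|_{**}$ against the quadratic remainder, and it is precisely this balance that motivates the technical assumptions (\ref{p+}), (\ref{q+}) and (\ref{m+}), as previewed in Remark \ref{remark1}. The small exponent $\tau$ appearing in (\ref{p+}) and (\ref{m+}) is exactly the slack needed to absorb cross terms of the type $W_2^{p-2}\phi_2^2$ when $p$ is close to the critical value.

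The solution of (\ref{lemu}) is then obtained by selecting $(r_k, h_k, \Lambda_k) \in \wp_k$ so that the Lagrange multipliers from the reduction vanish. This is equivalent to finding a critical point of the reduced functional
\[
F_k(r,h,\Lambda) := I\bigl(W_{1,r,h,\Lambda} + \phi_1,\; W_{2,r,h,\Lambda} + \phi_2\bigr).
\]
I would carry out an asymptotic expansion of $F_k$ in powers of $k^{-1}$: the leading term is the $2k$-fold bubble self-energy; the first correction splits into the potential terms $c_i |r-r_0|^{m_i}$ transferred by $K_i(y/\mu)$, the intra-circle nearest-neighbour interactions (whose size forces the scale $r \sim k^{(N-2)/(N-2-m)}$), and the inter-circle interaction between the top bubbles at $\overline{x}_j$ and the bottom bubbles at $\underline{x}_j$ (introducing the $h$-dependence and locking $h \sim B' k^{-(N-3)/(N-1)}$). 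Matching the potential correction against the bubble interactions in the dilation variable fixes $\Lambda$ at $\Lambda_0$. Given the expansion and its derivatives, a min-max or direct minimisation argument on the compact set $\wp_k$ yields an interior critical point $(r_k, h_k, \Lambda_k)$, and the corresponding $(u_k, v_k)$ is the desired solution of (\ref{lemu}).

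The hardest step is expected to be the precise asymptotic expansion of the inter-circle interaction and the proof that its $h$- and $\Lambda$-derivatives carry non-degenerate, non-vanishing leading coefficients. Because the inter-circle distance $|\overline{x}_j - \underline{x}_j| = 2rh$ lives on a different order than the intra-circle spacing, the two families of interactions contribute at different scales, and one must track them simultaneously with enough accuracy to certify that the critical point of $F_k$ lies in the interior of $\wp_k$ rather than on its boundary. The choice of $\wp_k$ together with the joint conditions (\ref{p+})--(\ref{m+}) is engineered precisely so that this balance closes and the reduction produces a genuine solution.
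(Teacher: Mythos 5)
Your proposal follows essentially the same route as the paper: a Lyapunov--Schmidt reduction in the weighted $\|\cdot\|_*$, $\|\cdot\|_{**}$ norms (invertibility of the projected linear operator by a blow-up/contradiction argument using the non-degeneracy of the ground state, then a contraction mapping giving $(\phi_1,\phi_2)$ with the quantitative smallness bound), followed by an expansion of the reduced energy $F(r,h,\Lambda)$ whose leading terms balance the potential corrections from $K_1,K_2$ against the intra- and inter-circle bubble interactions, fixing $r\sim k^{(N-2)/(N-2-m)}$, $h\sim B'k^{-(N-3)/(N-1)}$ and $\Lambda\sim\Lambda_0$. The only caveat is that the last step cannot be a direct minimisation over $\wp_k$, since the reduced energy is a saddle (after the sign change $\bar F=-F$ it is maximised in $r$ and minimised in $h,\Lambda$); the paper therefore uses the min-max/linking variant you mention, with a class of maps fixed on the $r$-boundary of $S_k$ and a gradient-flow deformation argument showing the flow does not leave $S_k$, which yields the interior critical point and hence the solution.
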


We will use the techniques in the singularly perturbed elliptic problems to prove Theorem \ref{thm1.1}. Inspired by \cite{weiy}, we use $k$, the number of the bumps of the solutions, as the parameter in the construction of spike solutions for (\ref{lemu}).

The paper is organized as follows. In Section \ref{Finite-dimensional reduction}, we will carry out the reduction.
Section \ref{existence} is devoted to prove Theorem \ref{thm1.1}, we obtain the existence of solution to problem \eqref{lemu} by reduction method. In Section \ref{appendix}, we will give some useful lemmas, propositions and the details for the energy of approximate solution expansion.

\section{Finite-dimensional reduction}\label{Finite-dimensional reduction}

According to the decay properties of the asymptotic solutions in (\ref{limiteq}), we define the following norms for $p>\frac{N}{N-2}$,
\begin{align}\label{norm1}
\|u\|_{*}=\sup_{y\in \mathbb{R}^N}\left(\sum_{j=1}^k\left[\frac{1}{(1+|y-\overline{x}_j|)^{\frac{N-2}{2}+\tau}}
+\frac{1}{(1+|y-\underline{x}_j|)^{\frac{N-2}{2}+\tau}}
\right]\right)^{-1}|u(y)|,
\end{align}
and
\begin{align}\label{norm2}
\|f\|_{**}=\sup_{y\in \mathbb{R}^N}\left(\sum_{j=1}^k\left[\frac{1}{(1+|y-\overline{x}_j|)^{\frac{N+2}{2}+\tau}}
+\frac{1}{(1+|y-\underline{x}_j|)^{\frac{N+2}{2}+\tau}}
\right]\right)^{-1}|f(y)|,
\end{align}
where
\begin{align}\label{tau}
\tau\geq \frac{N-2-m}{N-2}+\frac{2}{p-1}-\frac{N-2}{2}.
\end{align}
Denote also $\|(u,v)\|_{*}=\|u\|_{*}+\|v\|_{*}$ and $\|(u,v)\|_{**}=\|u\|_{**}+\|v\|_{**}$.

For $j=1,\cdots, k$, we divide $\mathbb{R}^N$ into $k$ parts:
\begin{align*}
\Omega_j:=\Bigg\{&y=(y_1,y_2,y_3,y'')\in \mathbb{R}^3\times \mathbb{R}^{N-3}:
\nonumber\\
&\qquad \left\langle \frac{(y_1,y_2)}{|(y_1,y_2)|},\left(\cos\frac{2(j-1)\pi}{k},\sin\frac{2(j-1)\pi}{k}\right)
\right\rangle_{\mathbb{R}^2}
\geq \cos \frac{\pi}{k}
\Bigg\},
\end{align*}
where $\langle,\rangle_{\mathbb{R}^2}$ denote the dot product in ${\mathbb{R}^2}$. Set
\begin{align*}
\Omega_j^+:=\{y: y=(y_1,y_2,y_3,y'')\in \Omega_j, y_3\geq 0\},
\end{align*}
and
\begin{align*}
\Omega_j^-:=\{y: y=(y_1,y_2,y_3,y'')\in \Omega_j, y_3\leq 0\}.
\end{align*}
Then
\begin{align*}
\mathbb{R}^N=\cup_{j=1}^k\Omega_j,\quad
\Omega_j=\Omega_j^+\cup \Omega_j^-
\end{align*}
and for $i\neq j$,
\begin{align*}
\Omega_j\cap \Omega_i=\emptyset,\quad
\Omega_j^+\cap \Omega_j^-=\emptyset.
\end{align*}
Define
\begin{align*}
\overline{Z}_{1j}=\frac{\partial U_{\overline{x}_j,\Lambda}}{\partial r},\quad
\ \ \ \overline{Z}_{2j}=\frac{\partial U_{\overline{x}_j,\Lambda}}{\partial h},\quad
\overline{Z}_{3j}=\frac{\partial U_{\overline{x}_j,\Lambda}}{\partial \Lambda},
\end{align*}

\begin{align*}
\underline{Z}_{1j}=\frac{\partial U_{\underline{x}_j,\Lambda}}{\partial r},\quad
\underline{Z}_{2j}=\frac{\partial U_{\underline{x}_j,\Lambda}}{\partial h},\quad
\underline{Z}_{3j}=\frac{\partial U_{\underline{x}_j,\Lambda}}{\partial \Lambda},
\end{align*}

\begin{align*}
\overline{Y}_{1j}=\frac{\partial V_{\overline{x}_j,\Lambda}}{\partial r},\quad
\overline{Y}_{2j}=\frac{\partial V_{\overline{x}_j,\Lambda}}{\partial h},\quad
\overline{Y}_{3j}=\frac{\partial V_{\overline{x}_j,\Lambda}}{\partial \Lambda},
\end{align*}

\begin{align*}
\underline{Y}_{1j}=\frac{\partial V_{\underline{x}_j,\Lambda}}{\partial r},\quad
\underline{Y}_{2j}=\frac{\partial V_{\underline{x}_j,\Lambda}}{\partial h},\quad
\underline{Y}_{3j}=\frac{\partial V_{\underline{x}_j,\Lambda}}{\partial \Lambda},
\end{align*}
for $j=1, \cdots, k$.

We first consider the following linear problem
\begin{equation}\label{linearp}
\begin{cases}
-\Delta\phi_1-pK_1\left(\frac{|y|}{\mu}\right)W_2^{p-1}\phi_2=f_{1}
+\sum_{l=1}^3\sum_{j=1}^kc_l\left(V_{\overline{x}_j,\Lambda}^{p-1}\overline{Y}_{lj}
+V_{\underline{x}_j,\Lambda}^{p-1}\underline{Y}_{lj}
\right),\  \text{ in } \mathbb{R}^N, \\
-\Delta\phi_2-qK_2\left(\frac{|y|}{\mu}\right)W_1^{q-1}\phi_1=f_{2}
+\sum_{l=1}^3\sum_{j=1}^kc_l\left(U_{\overline{x}_j,\Lambda}^{q-1}\overline{Z}_{lj}
+U_{\underline{x}_j,\Lambda}^{q-1}\underline{Z}_{lj}
\right),\  \text{ in } \mathbb{R}^N, \\
(\phi_1, \phi_2)\in \mathbb{E},
\end{cases}
\end{equation}
for some constants $c_l$ for $l=1, 2, 3$ and $\mathbb{E}$ is defined by
\begin{align}\label{E}
\mathbb{E}:=\Bigg\{(\phi_1, \phi_2)\in H_s,
&\int_{\mathbb{R}^N}V_{\overline{x}_j,\Lambda}^{p-1}\overline{Y}_{lj}\phi_1=0 \text{ and }
\int_{\mathbb{R}^N}V_{\underline{x}_j,\Lambda}^{p-1}\underline{Y}_{lj}\phi_1=0,
\nonumber\\
&\int_{\mathbb{R}^N}U_{\overline{x}_j,\Lambda}^{q-1}\overline{Z}_{lj}\phi_2=0
\text{ and }
\int_{\mathbb{R}^N}U_{\underline{x}_j,\Lambda}^{q-1}\underline{Z}_{lj}\phi_2=0,
j=1, \cdots, k, \ l=1, 2, 3
\Bigg\}.
\end{align}
Define
\begin{align}\label{lk}
L_k(\phi_1, \phi_2)=\left(-\Delta \phi_1-pK_1\left(\frac{|y|}{\mu}\right)W_2^{p-1}\phi_2,
-\Delta \phi_2-qK_2\left(\frac{|y|}{\mu}\right)W_1^{q-1}\phi_1\right).
\end{align}

\begin{lemma}\label{lpL}
Assume that $(\phi_{1,k}, \phi_{2,k})$ solves (\ref{linearp}) for $(f_1,f_2)=(f_{1,k},f_{2,k})$.
If $\|(f_{1,k},f_{2,k})\|_{**}$ goes to zero as $k$ goes to infinity, so does $\|(\phi_{1,k}, \phi_{2,k})\|_{*}$.
\end{lemma}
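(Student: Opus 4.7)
\smallskip
\noindent\textbf{Proof proposal.}
The plan is a standard contradiction-and-blow-up argument adapted to the coupled elliptic system in $(\phi_1,\phi_2)$. I would assume the opposite: that there exist sequences $k_n\to\infty$, $(r_{k_n},h_{k_n},\Lambda_{k_n})\in\wp_{k_n}$, right-hand sides $(f_{1,k_n},f_{2,k_n})$ with $\|(f_{1,k_n},f_{2,k_n})\|_{**}\to 0$, and solutions $(\phi_{1,k_n},\phi_{2,k_n})$ satisfying the orthogonality conditions \eqref{E} for which $\|(\phi_{1,k_n},\phi_{2,k_n})\|_{*}\geq c_0>0$. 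After normalizing to $\|(\phi_{1,k_n},\phi_{2,k_n})\|_{*}=1$, I want to derive a contradiction by showing that the two bumps near, say, $\overline{x}_1$ must vanish in the limit, and then use the symmetry in $H_s$ to propagate the vanishing to every $\overline{x}_j,\underline{x}_j$.

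The first step is to derive a pointwise representation for each component by inverting $-\Delta$. Writing
\begin{equation*}
\phi_i(y)=c_N\int_{\R^N}\frac{1}{|y-z|^{N-2}}\Big(\text{RHS of the $i$-th equation}\Big)(z)\,dz,
\end{equation*}
and using the standard convolution estimates (provided in the Appendix of the paper) of the form
\begin{equation*}
\int_{\R^N}\frac{1}{|y-z|^{N-2}}\frac{W_\ell^{p_\ell-1}(z)}{(1+|z-\overline{x}_j|)^{\frac{N-2}{2}+\tau}}\,dz
\;\lesssim\; \frac{1}{(1+|y-\overline{x}_j|)^{\frac{N-2}{2}+\tau+\sigma}}
\end{equation*}
for some small $\sigma>0$, together with the bound $|\phi_i(y)|\leq\|\phi_i\|_{*}\sum_j[\cdots]$, I obtain
\begin{equation*}
|\phi_i(y)|\;\leq\;\big(o(1)\,\|(\phi_1,\phi_2)\|_{*}+\|f_i\|_{**}+\text{Lagrange terms}\big)\sum_{j=1}^k\Big[\tfrac{1}{(1+|y-\overline{x}_j|)^{(N-2)/2+\tau}}+\tfrac{1}{(1+|y-\underline{x}_j|)^{(N-2)/2+\tau}}\Big],
\end{equation*}
where the $o(1)$ factor reflects the decay of $W^{p-1}$ away from the centers and the assumption $\tau\geq\frac{N-2-m}{N-2}+\frac{2}{p-1}-\frac{N-2}{2}$ in \eqref{tau}. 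The Lagrange multipliers $c_l$ are controlled by testing \eqref{linearp} against the dual kernel functions $\overline{Z}_{lj},\overline{Y}_{lj}$ and using the (almost-)orthogonality of these kernels, yielding $|c_l|=o(1)+O(\|f\|_{**})$; combined with the decay of $V_{\overline{x}_j,\Lambda}^{p-1}\overline{Y}_{lj}$ the Lagrange contribution to the $\|\cdot\|_{*}$ norm is also $o(1)$.

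Plugging these ingredients into the pointwise bound and dividing by $\|(\phi_1,\phi_2)\|_*=1$ forces at least one of the sequences of rescaled functions
\begin{equation*}
\widetilde{\phi}_{1,k_n}(y):=\Lambda_{k_n}^{-\frac{N}{q+1}}\phi_{1,k_n}\big(\Lambda_{k_n}^{-1}y+\overline{x}_1\big),\qquad
\widetilde{\phi}_{2,k_n}(y):=\Lambda_{k_n}^{-\frac{N}{p+1}}\phi_{2,k_n}\big(\Lambda_{k_n}^{-1}y+\overline{x}_1\big)
\end{equation*}
(or the analogous one centered at $\underline{x}_1$) to be bounded and not identically $o(1)$. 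By elliptic regularity the pair $(\widetilde{\phi}_{1,k_n},\widetilde{\phi}_{2,k_n})$ is locally uniformly bounded, so along a subsequence it converges to a limit $(\phi_1^\infty,\phi_2^\infty)$ which, because $K_i(y/\mu)\to 1$ near the bump location and the far-away bumps contribute $o(1)$ in the linearized coefficients $W_1^{q-1}$, $W_2^{p-1}$, solves the linearized limit system
\begin{equation*}
-\Delta\phi_1^\infty=pV_{0,1}^{p-1}\phi_2^\infty,\qquad -\Delta\phi_2^\infty=qU_{0,1}^{q-1}\phi_1^\infty\quad\text{in }\R^N.
\end{equation*}
The limit pair has the decay of the $\|\cdot\|_*$ norm, hence lies in $\dot W^{2,(p+1)/p}\times\dot W^{2,(q+1)/q}$. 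By the non-degeneracy of $(U_{0,1},V_{0,1})$ proved by Frank--Kim--Pistoia \cite{frankkp}, $(\phi_1^\infty,\phi_2^\infty)$ is a linear combination of the three kernel directions $\partial_\xi(U,V)$, $\partial_\lambda(U,V)$ (after respecting the symmetry in $H_s$, only the radial dilation and the two relevant translation modes survive). But these three modes are exactly the ones eliminated by the orthogonality conditions in \eqref{E}; passing the integrals to the limit shows $(\phi_1^\infty,\phi_2^\infty)\equiv 0$, contradicting the nontriviality extracted from the normalization.

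\smallskip
The main obstacle is the integral/convolution estimate that turns the convolution of $|y-z|^{2-N}$ with $W^{p-1}\cdot(1+|z-\overline{x}_j|)^{-(N-2)/2-\tau}$ back into a multiple of the same weight with a small gain. Because $(p,q)$ sits on the critical hyperbola with $p$ possibly close to $N/(N-2)$, the exponent $p-1$ is small and the cross-term $W_2^{p-1}\phi_2$ is the most delicate; this is precisely why the lower bound on $p$ in \eqref{p+} and on $\tau$ in \eqref{tau} are imposed. The second obstacle is handling the interaction between bumps on the top and bottom circles when one rescales around $\overline{x}_1$: one must verify that the contributions of the distant bumps to $W_1^{q-1}(z),W_2^{p-1}(z)$ near $\overline{x}_1$ are negligible using the geometric configuration described by $\wp_k$ (in particular $h_k\sim k^{-(N-3)/(N-1)}$ and $r_k\sim k^{(N-2)/(N-2-m)}$), and that the remaining interaction terms from the sum over $j\neq 1$ and from $\underline{x}_\ell$ are accounted for in the $o(1)$ factor. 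Once these two technical estimates are in place the blow-up/non-degeneracy contradiction closes the argument.
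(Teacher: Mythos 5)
Your proposal is correct and follows essentially the same route as the paper's proof: contradiction with normalization $\|(\phi_1,\phi_2)\|_*=1$, Green's representation with weighted convolution estimates to gain decay in the $W^{p-1}$ term, estimation of the multipliers $c_l$ by testing against the kernel functions, then concentration near a bubble, blow-up to the linearized limit system, and elimination of the limit via non-degeneracy together with the orthogonality conditions in $\mathbb{E}$. The only cosmetic difference is that you invoke the Frank--Kim--Pistoia non-degeneracy explicitly and include a $\Lambda$-rescaling, while the paper simply translates by $\overline{x}_j$ (legitimate since $\Lambda_k$ stays in a compact interval) and uses the kernel characterization implicitly.
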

\begin{proof}
Assume by contradiction, suppose that there exists a sequence $(r_k,h_k,\Lambda_k)\in \wp_k$ and $(\phi_{1,k}, \phi_{2,k})$ satisfies (\ref{linearp}) with $(f_1,f_2)=(f_{1,k},f_{2,k})$, $r=r_k$, $h=h_k$, $\Lambda=\Lambda_k$. Without loss of generality, we assume that $\|(f_{1,k},f_{2,k})\|_{**}\rightarrow 0$ and $\|(\phi_{1,k}, \phi_{2,k})\|_{*}=1$. For convenience, we drop the subscript $k$.

From (\ref{linearp}), we have
\begin{align}\label{lpLphi1}
\phi_1(y)=\int_{\mathbb{R}^N}\frac{1}{|y-z|^{N-2}}\left(pK_1\left(\frac{|z|}{\mu}\right)W_2^{p-1}\phi_2(z)+f_{1}(z)
+\sum_{l=1}^3\sum_{j=1}^kc_l\left(V_{\overline{x}_j,\Lambda}^{p-1}\overline{Y}_{lj}
+V_{\underline{x}_j,\Lambda}^{p-1}\underline{Y}_{lj}
\right)\right)dz,
\end{align}
\begin{align}\label{lpLphi2}
\phi_2(y)=\int_{\mathbb{R}^N}\frac{1}{|y-z|^{N-2}}\left(qK_2\left(\frac{|z|}{\mu}\right)W_1^{q-1}\phi_1(z)+f_{2}(z)
+\sum_{l=1}^3\sum_{j=1}^kc_l\left(U_{\overline{x}_j,\Lambda}^{q-1}\overline{Z}_{lj}
+U_{\underline{x}_j,\Lambda}^{q-1}\underline{Z}_{lj}
\right)\right)dz.
\end{align}
We mainly deal with $\phi_1$, since $\phi_2$ can be estimated in the same way. It holds
\begin{align}\label{lpLphi11}
\phi_1(y)=&p\int_{\mathbb{R}^N}\frac{K_1\left(\frac{|z|}{\mu}\right)W_2^{p-1}\phi_2(z)}{|y-z|^{N-2}}dz
+\int_{\mathbb{R}^N}\frac{f_{1}(z)}{|y-z|^{N-2}}dz
\nonumber\\
&+\int_{\mathbb{R}^N}\frac{1}{|y-z|^{N-2}}\sum_{l=1}^3\sum_{j=1}^kc_l\left(V_{\overline{x}_j,\Lambda}^{p-1}\overline{Y}_{lj}
+V_{\underline{x}_j,\Lambda}^{p-1}\underline{Y}_{lj}
\right)dz
:=M_1+M_2+M_3.
\end{align}
We first estimate $M_1$. By \cite[Lemma B.5]{duanmw}, there exists some $\theta>0$ such that
\begin{align*}
M_1\leq &C\|\phi_2\|_{*}\int_{\mathbb{R}^N}\frac{K_1\left(\frac{|z|}{\mu}\right)}{|y-z|^{N-2}}W_2^{p-1}
\left(\sum_{j=1}^k\left[\frac{1}{(1+|y-\overline{x}_j|)^{\frac{N-2}{2}+\tau}}
+\frac{1}{(1+|y-\underline{x}_j|)^{\frac{N-2}{2}+\tau}}
\right]\right)dz
\nonumber\\
\leq &C\|\phi_2\|_{*}\int_{\mathbb{R}^N}\frac{1}{|y-z|^{N-2}}
\left(\sum_{j=1}^k\left[\frac{1}{(1+|y-\overline{x}_j|)^{N-2}}
+\frac{1}{(1+|y-\underline{x}_j|)^{N-2}}
\right]\right)^{p-1}
\nonumber\\
&\times\left(\sum_{j=1}^k\left[\frac{1}{(1+|y-\overline{x}_j|)^{\frac{N-2}{2}+\tau}}
+\frac{1}{(1+|y-\underline{x}_j|)^{\frac{N-2}{2}+\tau}}
\right]\right)dz
\nonumber\\
\leq &C\|\phi_2\|_{*}\sum_{j=1}^k\left[\frac{1}{(1+|y-\overline{x}_j|)^{\frac{N-2}{2}+\tau+\theta}}
+\frac{1}{(1+|y-\underline{x}_j|)^{\frac{N-2}{2}+\tau+\theta}}
\right],
\end{align*}
where $p>\max\left\{\frac{N+1}{N-2},\frac{N(N-2)}{(N-2)^2-(N-2-m)}\right\}$, this condition is similar to \cite{guolp}.

Moreover,
\begin{align*}
M_2=&\int_{\mathbb{R}^N}\frac{f_{1}(z)}{|y-z|^{N-2}}dz
\nonumber\\
\leq &C
\|f_{1}(z)\|_{**}\int_{\mathbb{R}^N}\frac{1}{|y-z|^{N-2}}
\left(\sum_{j=1}^k\left[\frac{1}{(1+|y-\overline{x}_j|)^{\frac{N+2}{2}+\tau}}
+\frac{1}{(1+|y-\underline{x}_j|)^{\frac{N+2}{2}+\tau}}
\right]\right)dz
\nonumber\\
\leq &C
\|f_{1}(z)\|_{**}
\sum_{j=1}^k\left[\frac{1}{(1+|y-\overline{x}_j|)^{\frac{N-2}{2}+\tau}}
+\frac{1}{(1+|y-\underline{x}_j|)^{\frac{N-2}{2}+\tau}}
\right]
\end{align*}
and since
\begin{align*}
|\overline{Z}_{1j}|\leq \frac{C}{(1+|y-\overline{x}_j|)^{N-1}},\
|\overline{Z}_{2j}|\leq \frac{Cr}{(1+|y-\overline{x}_j|)^{N-1}},\
|\overline{Z}_{3j}|\leq \frac{C}{(1+|y-\overline{x}_j|)^{N-2}},\
\end{align*}
\begin{align*}
|\overline{Y}_{1j}|\leq \frac{C}{(1+|y-\underline{x}_j|)^{N-1}},\
|\overline{Y}_{2j}|\leq \frac{Cr}{(1+|y-\underline{x}_j|)^{N-1}},\
|\overline{Y}_{3j}|\leq \frac{C}{(1+|y-\underline{x}_j|)^{N-2}},\
\end{align*}
we obtain that for $l=1,2,3$,
\begin{align*}
\left|\int_{\mathbb{R}^N}\frac{1}{|y-z|^{N-2}}
V_{\overline{x}_j,\Lambda}^{p-1}\overline{Y}_{lj}dz\right|
\leq &C
\sum_{j=1}^k\int_{\mathbb{R}^N}\frac{1}{|y-z|^{N-2}}
\frac{1+r\delta_{l2}}{(1+|z-\overline{x}_j|)^{p(N-2)}}dz
\nonumber\\
\leq &C
\sum_{j=1}^k\frac{1+r\delta_{l2}}{(1+|y-\overline{x}_j|)^{\frac{N-2}{2}+\tau}}.
\end{align*}
Similarly, we have
\begin{align*}
&\left|\int_{\mathbb{R}^N}\frac{1}{|y-z|^{N-2}}
V_{\underline{x}_j,\Lambda}^{p-1}\underline{Y}_{lj}dz\right|
\leq C
\sum_{j=1}^k\frac{1+r\delta_{l2}}{(1+|y-\underline{x}_j|)^{\frac{N-2}{2}+\tau}}.
\end{align*}

Next, we estimate $c_i$ for $i=1,2,3$. Multiplying the two equation in (\ref{linearp}) by $\overline{Y}_{l1}$ and $\overline{Z}_{l1}$ respectively. Then
\begin{align*}
&\sum_{i=1}^3\sum_{j=1}^kc_i\langle (V_{\overline{x}_j,\Lambda}^{p-1}\overline{Y}_{ij}
+V_{\underline{x}_j,\Lambda}^{p-1}\underline{Y}_{ij}, U_{\overline{x}_j,\Lambda}^{q-1}\overline{Z}_{ij}
+U_{\underline{x}_j,\Lambda}^{q-1}\underline{Z}_{ij} ),(\overline{Y}_{l1}, \overline{Z}_{l1}) \rangle
\nonumber\\
=&\langle L_k(\phi_1, \phi_2), (\overline{Y}_{l1}, \overline{Z}_{l1})\rangle
-\langle (f_1, f_2),(\overline{Y}_{l1}, \overline{Z}_{l1}) \rangle.
\end{align*}
For $l=1,2,3$, by \cite[Lemma B.3]{duanmw},
\begin{align*}
&\left|\langle (f_1, f_2),(\overline{Y}_{l1}, \overline{Z}_{l1}) \rangle\right|
\nonumber\\
\leq
&C\|(f_1, f_2)\|_{**}\int_{\mathbb{R}^N}\frac{1+r\delta_{l2}}{(1+|y-\overline{x}_1|)^{N-2}}
\sum_{j=1}^k\left[\frac{1}{(1+|y-\overline{x}_j|)^{\frac{N+2}{2}+\tau}}
+\frac{1}{(1+|y-\underline{x}_j|)^{\frac{N+2}{2}+\tau}}
\right]
\nonumber\\
\leq
&C(1+r\delta_{l2})\|(f_1, f_2)\|_{**}.
\end{align*}
Because of
\begin{align*}
\langle L_k(\phi_1, \phi_2), (\overline{Y}_{l1}, \overline{Z}_{l1})\rangle
=&\langle L_k(\overline{Y}_{l1}, \overline{Z}_{l1}), (\phi_1, \phi_2)\rangle
\nonumber\\
=&\int_{\mathbb{R}^N}p\left(1-K_1\left(\frac{|y|}{\mu}\right)\right)W_2^{p-1}\phi_2\overline{Y}_{l1}
+p\left(V_{\overline{x}_1,\Lambda}^{p-1}-W_2^{p-1}\right)\phi_2\overline{Y}_{l1}
\nonumber\\
&+\int_{\mathbb{R}^N}q\left(1-K_2\left(\frac{|y|}{\mu}\right)\right)W_1^{q-1}\phi_1\overline{Z}_{l1}
+q\left(U_{\overline{x}_1,\Lambda}^{q-1}-W_1^{q-1}\right)\phi_1\overline{Z}_{l1}
\nonumber\\
:=&J_1+J_2.
\end{align*}
We mainly estimate the term $J_2$, since $J_1$ can be got in the same way.
In fact
\begin{align*}
J_2=\int_{\mathbb{R}^N}q\left(1-K_2\left(\frac{|y|}{\mu}\right)\right)W_1^{q-1}\phi_1\overline{Z}_{l1}
+q\left(U_{\overline{x}_1,\Lambda}^{q-1}-W_1^{q-1}\right)\phi_1\overline{Z}_{l1}
:=J_{21}+J_{22}.
\end{align*}
For the term $J_{21}$, we have
\begin{align*}
J_{21}\leq &C\|\phi_1\|_{*}
\int_{\mathbb{R}^N}\left|1-K_2\left(\frac{|y|}{\mu}\right)\right|W_1^{q-1}\overline{Z}_{l1}
\sum_{j=1}^k\left[\frac{1}{(1+|y-\overline{x}_j|)^{\frac{N-2}{2}+\tau}}
+\frac{1}{(1+|y-\underline{x}_j|)^{\frac{N-2}{2}+\tau}}
\right]
\nonumber\\
=&C\|\phi_1\|_{*}\left\{\int_{||y|-\mu r_0|\leq \sqrt{\mu}}+\int_{||y|-\mu r_0|\geq \sqrt{\mu}}\right\}
\left|1-K_2\left(\frac{|y|}{\mu}\right)\right|W_1^{q-1}\overline{Z}_{l1}
\nonumber\\
&\times \sum_{j=1}^k\left[\frac{1}{(1+|y-\overline{x}_j|)^{\frac{N-2}{2}+\tau}}
+\frac{1}{(1+|y-\underline{x}_j|)^{\frac{N-2}{2}+\tau}}
\right],
\end{align*}
where
\begin{align*}
&\int_{||y|-\mu r_0|\leq \sqrt{\mu}}\left|1-K_2\left(\frac{|y|}{\mu}\right)\right|W_1^{q-1}\overline{Z}_{l1}
\sum_{j=1}^k\left[\frac{1}{(1+|y-\overline{x}_j|)^{\frac{N-2}{2}+\tau}}
+\frac{1}{(1+|y-\underline{x}_j|)^{\frac{N-2}{2}+\tau}}
\right]
\nonumber\\
\leq &\frac{C}{\sqrt{\mu}}\int_{\mathbb{R}^N}W_1^{q-1}\overline{Z}_{l1}
\sum_{j=1}^k\left[\frac{1}{(1+|y-\overline{x}_j|)^{\frac{N-2}{2}+\tau}}
+\frac{1}{(1+|y-\underline{x}_j|)^{\frac{N-2}{2}+\tau}}
\right]
\nonumber\\
\leq &\frac{C}{\sqrt{\mu}}\int_{\mathbb{R}^N}W_1^{q-1}\frac{1+r\delta_{l2}}{(1+|y-\overline{x}_1|)^{N-2}}
\sum_{j=1}^k\left[\frac{1}{(1+|y-\overline{x}_j|)^{\frac{N-2}{2}+\tau}}
+\frac{1}{(1+|y-\underline{x}_j|)^{\frac{N-2}{2}+\tau}}
\right]
\nonumber\\
\leq &\frac{C(1+r\delta_{l2})}{\sqrt{\mu}}.
\end{align*}
When $||y|-\mu r_0|\geq \sqrt{\mu}$,
\begin{align*}
|y-\overline{x}_1|
\geq ||y|-\mu r_0|-||\overline{x}_1|-\mu r_0|
\geq \sqrt{\mu}-\frac{1}{\mu^{\overline{\theta}}}
\geq \frac{1}{2}\sqrt{\mu},
\end{align*}
then
\begin{align*}
&\int_{||y|-\mu r_0|\geq \sqrt{\mu}}\left|1-K_2\left(\frac{|y|}{\mu}\right)\right|W_1^{q-1}\overline{Z}_{l1}
\sum_{j=1}^k\left[\frac{1}{(1+|y-\overline{x}_j|)^{\frac{N-2}{2}+\tau}}
+\frac{1}{(1+|y-\underline{x}_j|)^{\frac{N-2}{2}+\tau}}
\right]
\nonumber\\
\leq &\frac{C}{{\mu}^{\sigma}}\int_{\mathbb{R}^N}W_1^{q-1}\frac{1+r\delta_{l2}}{(1+|y-\overline{x}_1|)^{N-2}}
\sum_{j=1}^k\left[\frac{1}{(1+|y-\overline{x}_j|)^{\frac{N-2}{2}+\tau-2\sigma}}
+\frac{1}{(1+|y-\underline{x}_j|)^{\frac{N-2}{2}+\tau-2\sigma}}
\right]
\nonumber\\
\leq &\frac{C(1+r\delta_{l2})}{{\mu}^{\sigma}}.
\end{align*}
Therefore,
\begin{align*}
J_{21}=O\left(\frac{1+r\delta_{l2}}{{\mu}^{\sigma}} \|\phi_1\|_{*}\right) \ \text{ for some } \sigma>0.
\end{align*}
As to $J_{22}$. For $q-1\leq 1$,
\begin{align*}
\left|U_{\overline{x}_1,\Lambda}^{q-1}-W_1^{q-1}\right|
\leq \left(\sum_{j=2}^kU_{\overline{x}_j,\Lambda}+\sum_{j=1}^kU_{\underline{x}_j,\Lambda}\right)^{q-1},
\end{align*}
then we have
\begin{align*}
|J_{22}|\leq &C\|\phi_1\|_{*}
\int_{\mathbb{R}^N}\left(\sum_{j=2}^kU_{\overline{x}_j,\Lambda}+\sum_{j=1}^kU_{\underline{x}_j,\Lambda}\right)^{q-1}
\frac{1+r\delta_{l2}}{(1+|y-\overline{x}_1|)^{N-2}}
\nonumber\\
&\times \sum_{j=1}^k\left[\frac{1}{(1+|y-\overline{x}_j|)^{\frac{N-2}{2}+\tau}}
+\frac{1}{(1+|y-\underline{x}_j|)^{\frac{N-2}{2}+\tau}}
\right]
\nonumber\\
\leq &C\|\phi_1\|_{*}\Bigg\{\int_{\mathbb{R}^N}\sum_{j=2}^kU_{\overline{x}_j,\Lambda}^{q-1}
\frac{1+r\delta_{l2}}{(1+|y-\overline{x}_1|)^{N-2+\frac{N-2}{2}+\tau}}
\nonumber\\
&+\int_{\mathbb{R}^N}\sum_{j=2}^kU_{\overline{x}_j,\Lambda}^{q-1}
\frac{1+r\delta_{l2}}{(1+|y-\overline{x}_1|)^{N-2}}\sum_{j=1}^k\frac{1}{(1+|y-\underline{x}_j|)^{\frac{N-2}{2}+\tau}}
\nonumber\\
&+\int_{\mathbb{R}^N}\left(\sum_{j=2}^k\frac{1}{(1+|y-\overline{x}_j|)^{\frac{N-2}{2}+\tau}}\right)^q
\frac{1+r\delta_{l2}}{(1+|y-\overline{x}_1|)^{N-2}}
\nonumber\\
&+\int_{\mathbb{R}^N}\left(\sum_{j=1}^kU_{\underline{x}_j,\Lambda}\right)^{q-1}
\frac{1+r\delta_{l2}}{(1+|y-\overline{x}_1|)^{N-2}}
\sum_{j=1}^k\left[\frac{1}{(1+|y-\overline{x}_j|)^{\frac{N-2}{2}+\tau}}
+\frac{1}{(1+|y-\underline{x}_j|)^{\frac{N-2}{2}+\tau}}
\right]
\Bigg\}.
\end{align*}
Since $p,q>\frac{N+1}{N-2}$, we have $q(N-2)+\frac{N-2}{2}+\tau>N+1+\frac{N-2}{2}+\tau$. Then there exists $\tilde{\theta}>0$ such that for $\frac{N-2-m}{N-2}\leq \alpha\leq q(N-2)-\frac{N+2}{2}+\tau-\tilde{\theta}$, we have
\begin{align*}
q(N-2)+\frac{N-2}{2}+\tau-\alpha\geq N+\tilde{\theta}.
\end{align*}
Thus,
\begin{align*}
&\int_{\mathbb{R}^N}\sum_{j=2}^kU_{\overline{x}_j,\Lambda}^{q-1}
\frac{1+r\delta_{l2}}{(1+|y-\overline{x}_1|)^{N-2+\frac{N-2}{2}+\tau}}
\nonumber\\
\leq &C\int_{\mathbb{R}^N}\sum_{j=2}^k\frac{1}{(1+|y-\overline{x}_j|)^{(N-2)(q-1)}}
\frac{1+r\delta_{l2}}{(1+|y-\overline{x}_1|)^{N-2+\frac{N-2}{2}+\tau}}
\nonumber\\
\leq &C\sum_{j=2}^k\frac{1+r\delta_{l2}}{|\overline{x}_1-\overline{x}_j|^{\alpha}}
\Bigg\{\int_{\mathbb{R}^N}\frac{1}{(1+|y-\overline{x}_1|)^{(N-2)q+\frac{N-2}{2}+\tau-\alpha}}
+\int_{\mathbb{R}^N}\frac{1}{(1+|y-\overline{x}_j|)^{(N-2)q+\frac{N-2}{2}+\tau-\alpha}}
\Bigg\}
\nonumber\\
\leq &C\sum_{j=2}^k\frac{1+r\delta_{l2}}{|\overline{x}_1-\overline{x}_j|^{\alpha}}
\int_{\mathbb{R}^N}\left(\frac{1}{(1+|y-\overline{x}_1|)^{N+\tilde{\theta}}}
+\frac{1}{(1+|y-\overline{x}_j|)^{N+\tilde{\theta}}}\right).
\end{align*}
Moreover
\begin{align*}
&\int_{\mathbb{R}^N}\sum_{j=2}^kU_{\overline{x}_j,\Lambda}^{q-1}
\frac{1+r\delta_{l2}}{(1+|y-\overline{x}_1|)^{N-2}}\sum_{j=1}^k\frac{1}{(1+|y-\underline{x}_j|)^{\frac{N-2}{2}+\tau}}
\nonumber\\
\leq &C\int_{\mathbb{R}^N}\sum_{j=2}^k\left(\frac{1}{(1+|y-\overline{x}_j|)}\right)^{(N-2)(q-1)}
\frac{1+r\delta_{l2}}{(1+|y-\overline{x}_1|)^{N-2}}\sum_{j=1}^k\frac{1}{(1+|y-\underline{x}_j|)^{\frac{N-2}{2}+\tau}}
\nonumber\\
\leq &C\int_{\mathbb{R}^N}\sum_{j=2}^k\left(\frac{1}{(1+|y-\overline{x}_j|)}\right)^{(N-2)(q-1)}
\sum_{j=1}^k\frac{(1+r\delta_{l2})}{|\overline{x}_1-\underline{x}_j|^{\alpha}}
\nonumber\\
&\times\Bigg\{\frac{1}{(1+|y-\overline{x}_1|)^{N-2+\frac{N-2}{2}+\tau-\alpha}}+
\frac{1}{(1+|y-\underline{x}_j|)^{N-2+\frac{N-2}{2}+\tau-\alpha}}\Bigg\}
\nonumber\\
\leq &C\int_{\mathbb{R}^N}\sum_{j=2}^k\frac{1}{|\overline{x}_1-\overline{x}_j|^{\beta}}
\Bigg\{\frac{1}{(1+|y-\overline{x}_1|)^{q(N-2)+\frac{N-2}{2}+\tau-\alpha-\beta}}+
\frac{1}{(1+|y-\overline{x}_j|)^{q(N-2)+\frac{N-2}{2}+\tau-\alpha-\beta}}
\Bigg\}
\nonumber\\
&\times \sum_{i=1}^k\frac{(1+r\delta_{l2})}{|\overline{x}_1-\underline{x}_i|^{\alpha}}
+C\int_{\mathbb{R}^N}\sum_{j=2}^k\frac{1}{|\overline{x}_j-\underline{x}_i|^{\beta}}
\Bigg\{\frac{1}{(1+|y-\overline{x}_j|)^{q(N-2)+\frac{N-2}{2}+\tau-\alpha-\beta}}
\nonumber\\
&+
\frac{1}{(1+|y-\underline{x}_i|)^{q(N-2)+\frac{N-2}{2}+\tau-\alpha-\beta}}
\Bigg\}
\sum_{i=1}^k\frac{(1+r\delta_{l2})}{|\overline{x}_1-\underline{x}_i|^{\alpha}}
\nonumber\\
=&O\left(\frac{1+r\delta_{l2}}{\mu^{\sigma}}\right),
\end{align*}
where $\alpha$, $\beta$ satisfy $0<\alpha<\frac{N-2}{2}+\tau$, $0<\beta<\min\{(N-2)(q-1), N-2+\frac{N-2}{2}+\tau-\alpha\}$ and
\begin{align*}
q(N-2)+\frac{N-2}{2}+\tau-\alpha-\beta>N.
\end{align*}
Similarly, for $\gamma\in [\frac{N-2-m}{N-2}, q(\frac{N-2}{2}+\tau)-2]$, we get
\begin{align*}
&\int_{\mathbb{R}^N}\left(\sum_{j=2}^k\frac{1}{(1+|y-\overline{x}_j|)^{\frac{N-2}{2}+\tau}}\right)^q
\frac{1+r\delta_{l2}}{(1+|y-\overline{x}_1|)^{N-2}}
\nonumber\\
\leq &C\int_{\mathbb{R}^N}\sum_{j=2}^k\frac{1+r\delta_{l2}}{|\overline{x}_1-\overline{x}_j|^{\gamma}}
\Bigg\{\frac{1}{(1+|y-\overline{x}_1|)^{N-2+q(\frac{N-2}{2}+\tau)-\gamma}}
+\frac{1}{(1+|y-\overline{x}_j|)^{N-2+q(\frac{N-2}{2}+\tau)-\gamma}}\Bigg\}
\nonumber\\
=&O\left(\frac{1+r\delta_{l2}}{\mu^{\sigma}}\right),
\end{align*}
and by the same way, we have
\begin{align*}
&\int_{\mathbb{R}^N}\left(\sum_{j=1}^kU_{\underline{x}_j,\Lambda}\right)^{q-1}
\frac{1+r\delta_{l2}}{(1+|y-\overline{x}_1|)^{N-2}}
\sum_{j=1}^k\left[\frac{1}{(1+|y-\overline{x}_j|)^{\frac{N-2}{2}+\tau}}
+\frac{1}{(1+|y-\underline{x}_j|)^{\frac{N-2}{2}+\tau}}
\right]
=O\left(\frac{1+r\delta_{l2}}{\mu^{\sigma}}\right).
\end{align*}
Above all,
\begin{align*}
|J_{22}|=O\left(\frac{1+r\delta_{l2}}{\mu^{\sigma}}\|\phi_1\|_{*}\right) \ \text{ for } q\leq 2.
\end{align*}
For $q\geq 2$, we have
\begin{align*}
\left|U_{\overline{x}_1,\Lambda}^{q-1}-W_1^{q-1}\right|
\leq & U_{\overline{x}_1,\Lambda}^{q-2}\left(\sum_{j=2}^kU_{\overline{x}_j,\Lambda}+\sum_{j=1}^kU_{\underline{x}_j,\Lambda}\right)
+U_{\overline{x}_1,\Lambda}\left(\sum_{j=2}^kU_{\overline{x}_j,\Lambda}+\sum_{j=1}^kU_{\underline{x}_j,\Lambda}\right)^{q-2}
\nonumber\\
&+\left(\sum_{j=2}^kU_{\overline{x}_j,\Lambda}+\sum_{j=1}^kU_{\underline{x}_j,\Lambda}\right)^{q-1}.
\end{align*}
Similar to the case $q\leq 2$, we obtain that
\begin{align*}
|J_{22}|=O\left(\frac{1+r\delta_{l2}}{\mu^{\sigma}}\|\phi_1\|_{*}\right) \ \text{ for } q-1\geq 1.
\end{align*}
Therefore,
\begin{align*}
|J_{22}|=O\left(\frac{1+r\delta_{l2}}{\mu^{\sigma}}\|\phi_1\|_{*}\right).
\end{align*}
Moreover,
\begin{align*}
|J_{2}|=O\left(\frac{1+r\delta_{l2}}{\mu^{\sigma}}\|\phi_1\|_{*}\right).
\end{align*}
Similarly,
\begin{align*}
|J_{1}|=O\left(\frac{1+r\delta_{l2}}{\mu^{\sigma}}\|\phi_2\|_{*}\right).
\end{align*}
In conclusion,
\begin{align*}
\left|\langle L_k(\phi_1, \phi_2), (\overline{Y}_{l1}, \overline{Z}_{l1})\rangle\right|
=O\left(\frac{1+r\delta_{l2}}{\mu^{\sigma}}\|(\phi_1, \phi_2)\|_{*}\right).
\end{align*}
On the other hand, since
\begin{align*}
\int_{\mathbb{R}^N}U_{\overline{x}_1,\Lambda}^{q-1}\overline{Z}_{l1}\overline{Z}_{i1}=
\begin{cases}
0, & \text{if }l\neq i,\\
\bar{c}_l(1+\delta_{l2}r^2), & \text{if }l=i,
\end{cases}
\end{align*}
and
\begin{align*}
\int_{\mathbb{R}^N}U_{\underline{x}_1,\Lambda}^{q-1}\underline{Z}_{l1}\underline{Z}_{i1}=
\begin{cases}
0, & \text{if }l\neq i,\\
\underline{c}_l(1+\delta_{l2}r^2), & \text{if }l=i,
\end{cases}
\end{align*}
for some constant $\overline{c}_l>0$  and  $\underline{c}_l>0$.
We get
\begin{align*}
&\sum_{j=1}^k\langle (V_{\overline{x}_j,\Lambda}^{p-1}\overline{Y}_{ij}
+V_{\underline{x}_j,\Lambda}^{p-1}\underline{Y}_{ij}, U_{\overline{x}_j,\Lambda}^{q-1}\overline{Z}_{ij}
+U_{\underline{x}_j,\Lambda}^{q-1}\underline{Z}_{ij} ),(\overline{Y}_{l1}, \overline{Z}_{l1}) \rangle
\nonumber\\
=&\left(\bar{c}_l(1+\delta_{l2}r^2)+o(1),\underline{c}_l(1+\delta_{l2}r^2)+o(1)\right)\  \text{as } k\rightarrow \infty.
\end{align*}
Then for $l=1,2,3$,
\begin{align*}
c_l=\frac{1+\delta_{l2}r}{1+\delta_{l2}r^2}O\left(\frac{1}{\mu^{\sigma}}\|(\phi_1, \phi_2)\|_{**}
+\|(f_1, f_2)\|_{*}
\right)
=o(1)\  \text{as } k\rightarrow \infty.
\end{align*}
Therefore,
\begin{align*}
M_3=O\left(\sum_{j=1}^k\left[\frac{1}{(1+|y-\overline{x}_j|)^{\frac{N-2}{2}+\tau}}
+\frac{1}{(1+|y-\underline{x}_j|)^{\frac{N-2}{2}+\tau}}
\right]\left(\frac{1}{\mu^{\sigma}}\|(\phi_1, \phi_2)\|_{*}+\|(f_1, f_2)\|_{**}\right)\right).
\end{align*}
Then by (\ref{lpLphi1}) and (\ref{lpLphi2}),
\begin{align*}
\|(\phi_1, \phi_2)\|_{*}\leq &
\Bigg\{\|(f_1, f_2)\|_{**}\sum_{j=1}^k\left[\frac{1}{(1+|y-\overline{x}_j|)^{\frac{N-2}{2}+\tau}}
+\frac{1}{(1+|y-\underline{x}_j|)^{\frac{N-2}{2}+\tau}}
\right]
\nonumber\\
&+\sum_{j=1}^k\left[\frac{1}{(1+|y-\overline{x}_j|)^{\frac{N-2}{2}+\tau+\theta}}
+\frac{1}{(1+|y-\underline{x}_j|)^{\frac{N-2}{2}+\tau+\theta}}
\right]
\Bigg\}.
\end{align*}
In terms of the assumption that $\|(\phi_1, \phi_2)\|_{*}=1$, there exist $R, a>0$ such that for some $j$,
\begin{align}\label{phi12}
\|(\phi_1, \phi_2)\|_{L^{\infty}B_R(\overline{x}_j)}\geq a>0.
\end{align}
Let $(\widetilde{\phi}_1(y), \widetilde{\phi}_2(y)):=(\phi_1(y-\overline{x}_j), \phi_2(y-\overline{x}_j))\rightarrow (u,v)$ satisfies the following equation
\begin{align*}
\begin{cases}
-\Delta u-pV_{0,\Lambda}^{p-1}v=0\\
-\Delta v-qU_{0,\Lambda}^{q-1}u=0
\end{cases}
\text{ for some } \lambda\in [L_1,L_2].
\end{align*}
Since $\phi_i(i=1,2)$ is even in $y_d$, $d=2,4,\cdots, N$, we know that $(u,v)$ is even in $y_d$, $d=2,4,\cdots, N$.
Then $(u,v)$ must be a linear combination of the functions
\begin{align*}
\left(\frac{\partial U_{0,\Lambda}}{\partial y_1}, \frac{\partial V_{0,\Lambda}}{\partial y_1}\right),\ \
\left(\frac{\partial U_{0,\Lambda}}{\partial y_3}, \frac{\partial V_{0,\Lambda}}{\partial y_3}\right),\ \
\left(y\cdot \nabla U_{0,\Lambda}+(N-2)U_{0,\Lambda}, y\cdot \nabla V_{0,\Lambda}+(N-2)V_{0,\Lambda}\right).
\end{align*}
According to the fact that for $l=1,2,3$ and fixed $j$,
\begin{align*}
\int_{\mathbb{R}^N}U_{\overline{x}_j,\Lambda}^{q-1}\overline{Z}_{lj}\widetilde{\phi}_2
=0,
\end{align*}
we have that for $l=1$,
\begin{align}\label{phi2}
\sqrt{1-h^2}\int_{\mathbb{R}^N}U_{0,\Lambda}^{q-1}\frac{\partial U_{0,\Lambda}}{\partial y_1}\widetilde{\phi}_2
+h\int_{\mathbb{R}^N}U_{0,\Lambda}^{q-1}\frac{\partial U_{0,\Lambda}}{\partial y_3}\widetilde{\phi}_2
=0,
\end{align}
for $l=2$,
\begin{align}\label{phi2-}
\sqrt{1-h^2}\int_{\mathbb{R}^N}U_{0,\Lambda}^{q-1}\frac{\partial U_{0,\Lambda}}{\partial y_1}\widetilde{\phi}_2
-h\int_{\mathbb{R}^N}U_{0,\Lambda}^{q-1}\frac{\partial U_{0,\Lambda}}{\partial y_3}\widetilde{\phi}_2
=0,
\end{align}
and for $l=3$,
\begin{align}\label{phi3}
\int_{\mathbb{R}^N}U_{0,\Lambda}^{q-1}\frac{\partial U_{0,\Lambda}}{\partial \Lambda}\widetilde{\phi}_2=0.
\end{align}
By taking limit, we have
\begin{align*}
\int_{\mathbb{R}^N}U_{0,\Lambda}^{q-1}\frac{\partial U_{0,\Lambda}}{\partial y_1}u
=\int_{\mathbb{R}^N}U_{0,\Lambda}^{q-1}\frac{\partial U_{0,\Lambda}}{\partial y_3}u
=\int_{\mathbb{R}^N}U_{0,\Lambda}^{q-1}\left(y\cdot \nabla U_{0,\Lambda}+(N-2)U_{0,\Lambda}\right)u
=0.
\end{align*}
Then $u=0$. And by the same token, $v=0$, which is a contradiction to (\ref{phi12}). That concludes the proof.
\end{proof}

Based on the Lemma \ref{lpL}, we have the following existence, uniqueness results related to (\ref{linearp}) and the following estimates of $(\phi_1, \phi_2)$ and $c_i$ with $i=1,2,3$.
\begin{proposition}\label{lpLc}
There exist $k_0 > 0$ and a constant $C>0$ such that for all $k\geq k_0$ and all $(f_1,f_2)\in L^{\infty}(\mathbb{R}^N)\times L^{\infty}(\mathbb{R}^N)$, problem (\ref{linearp}) has a unique solution $(\phi_1,\phi_2)= \mathbb{L}_k(f_1,f_2)$. Moreover, there hold that
\begin{align*}
\|(\phi_1, \phi_2)\|_{*}\leq C\|(f_1, f_2)\|_{**}, \quad |c_i|\leq \frac{C}{1+\delta_{l2}r}\|(f_1, f_2)\|_{**}.
\end{align*}
\end{proposition}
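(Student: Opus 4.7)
The plan is to deduce Proposition \ref{lpLc} from the a priori estimate proved in Lemma \ref{lpL} by combining it with a standard Fredholm-alternative argument in the symmetric subspace $\mathbb{E}$. First, I would reformulate (\ref{linearp}) as an operator equation in the product space $X := \dot{W}^{2,(p+1)/p}(\mathbb{R}^N) \times \dot{W}^{2,(q+1)/q}(\mathbb{R}^N) \cap H_s$. Applying $(-\Delta)^{-1}$ to both lines and letting $P_\mathbb{E}$ denote the projection onto $\mathbb{E}$ associated with the orthogonality conditions in (\ref{E}), problem (\ref{linearp}) is equivalent to
\begin{equation*}
(I - T_k)(\phi_1,\phi_2) = \mathcal{F}_k,
\end{equation*}
where $T_k$ is the composition of $(-\Delta)^{-1}$ with multiplication by the weights $pK_1(|y|/\mu) W_2^{p-1}$ and $qK_2(|y|/\mu) W_1^{q-1}$, followed by $P_\mathbb{E}$, and $\mathcal{F}_k$ is the analogous image of $(f_1,f_2)$.

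Next, I would verify that $T_k$ is compact by a Rellich-type argument: the weights decay at the bubble scale because $W_1,W_2$ are built from integrable tails, so standard elliptic theory on the Lane-Emden dual formulation makes $T_k$ a compact perturbation of the identity. The a priori bound from Lemma \ref{lpL} shows that the homogeneous problem has only the trivial solution for $k$ large: a nonzero kernel element, renormalized so that $\|(\phi_1,\phi_2)\|_* = 1$, would satisfy (\ref{linearp}) with $(f_1,f_2)=0$, contradicting $\|(f_1,f_2)\|_{**} \to 0 \Rightarrow \|(\phi_1,\phi_2)\|_* \to 0$. The Fredholm alternative then yields invertibility of $I - T_k$, hence existence and uniqueness of $(\phi_1,\phi_2) = \mathbb{L}_k(f_1,f_2)$. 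The norm bound $\|(\phi_1,\phi_2)\|_* \leq C\|(f_1,f_2)\|_{**}$ follows by the same contradiction/normalization argument applied to the inhomogeneous problem.

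To extract the bound $|c_i| \leq \frac{C}{1+\delta_{l2}r}\|(f_1,f_2)\|_{**}$, I would simply unpack the final step of the proof of Lemma \ref{lpL}: testing (\ref{linearp}) against $(\overline{Y}_{l1},\overline{Z}_{l1})$ produced a near-diagonal algebraic system whose diagonal entries are $\bar c_l(1+\delta_{l2}r^2) + o(1)$ and $\underline c_l(1+\delta_{l2}r^2) + o(1)$, while the right-hand side was estimated by
\begin{equation*}
O\!\left(\tfrac{1+r\delta_{l2}}{\mu^\sigma}\|(\phi_1,\phi_2)\|_*\right) + C(1+r\delta_{l2})\|(f_1,f_2)\|_{**}.
\end{equation*}
Substituting the just-obtained bound $\|(\phi_1,\phi_2)\|_* \leq C\|(f_1,f_2)\|_{**}$ and dividing by the leading coefficient $(1+\delta_{l2}r^2)$ produces the factor $(1+\delta_{l2}r)/(1+\delta_{l2}r^2) \lesssim 1/(1+\delta_{l2}r)$, which gives precisely the stated estimate on $|c_l|$.

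The main obstacle I anticipate is making the Fredholm/compactness step rigorous in the asymmetric dual Sobolev framework $\dot{W}^{2,(p+1)/p} \times \dot{W}^{2,(q+1)/q}$ rather than the scalar $H^1$ setting, since the natural bilinear form for the Lane-Emden system is indefinite and one cannot directly apply the symmetric spectral theory. This is handled by working with the coupled integral formulation (as in (\ref{lpLphi1})--(\ref{lpLphi2})), where the operator becomes a compact perturbation of identity in a suitable weighted $L^\infty$ space, and the heavy lifting — the uniform-in-$k$ a priori bound — has already been carried out in Lemma \ref{lpL}.
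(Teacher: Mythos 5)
Your proposal is correct and follows essentially the route the paper intends: the paper states Proposition \ref{lpLc} without proof as a direct consequence of Lemma \ref{lpL}, relying on exactly the standard reformulation of (\ref{linearp}) as a compact perturbation of the identity in the weighted space, the Fredholm alternative combined with the a priori estimate of Lemma \ref{lpL} to rule out a nontrivial kernel for large $k$, and the normalization/contradiction argument for the norm bound. Your extraction of the $|c_l|$ estimate by reusing the testing computation from the proof of Lemma \ref{lpL}, together with the observation that $(1+\delta_{l2}r)/(1+\delta_{l2}r^2)\lesssim 1/(1+\delta_{l2}r)$, is also the intended argument.
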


We next consider the following nonlinear problem
\begin{equation}\label{nlp}
\begin{cases}
-\Delta(W_1+\phi_1)=K_1\left(\frac{|y|}{\mu}\right)(W_2+\phi_2)^p
+\sum_{l=1}^3\sum_{j=1}^kc_l\left(V_{\overline{x}_j,\Lambda}^{p-1}\overline{Y}_{lj}
+V_{\underline{x}_j,\Lambda}^{p-1}\underline{Y}_{lj}
\right)  \text{ in } \mathbb{R}^N, \\
-\Delta(W_2+\phi_2)=K_2\left(\frac{|y|}{\mu}\right)(W_1+\phi_1)^q
+\sum_{l=1}^3\sum_{j=1}^kc_l\left(U_{\overline{x}_j,\Lambda}^{q-1}\overline{Z}_{lj}
+U_{\underline{x}_j,\Lambda}^{q-1}\underline{Z}_{lj}
\right)  \text{ in } \mathbb{R}^N, \\
(\phi_1, \phi_2)\in \mathbb{E},
\end{cases}
\end{equation}
which is equivalent to the following problem
\begin{equation}\label{nlp1}
\begin{cases}
L_k(\phi_1, \phi_2)=R_k+N_k(\phi_1, \phi_2)+\sum_{l=1}^3\sum_{j=1}^kc_l\langle V_{\overline{x}_j,\Lambda}^{p-1}\overline{Y}_{lj}
+V_{\underline{x}_j,\Lambda}^{p-1}\underline{Y}_{lj}, U_{\overline{x}_j,\Lambda}^{q-1}\overline{Z}_{lj}
+U_{\underline{x}_j,\Lambda}^{q-1}\underline{Z}_{lj}\rangle, \\
(\phi_1, \phi_2)\in \mathbb{E},
\end{cases}
\end{equation}
where $L_k$ is defined in (\ref{lk}),
$N_k(\phi_1, \phi_2)=(N_{1,k}(\phi_2), N_{2,k}(\phi_1))$ with
\begin{align*}
N_{1,k}(\phi_2)=K_1\left(\frac{|y|}{\mu}\right)\left((W_2+\phi_2)^p-W_2^p-pW_2^{p-1}\phi_2
\right),
\end{align*}
\begin{align*}
N_{2,k}(\phi_1)=K_2\left(\frac{|y|}{\mu}\right)\left((W_1+\phi_1)^q-W_1^q-qW_1^{q-1}\phi_1
\right),
\end{align*}
and
\begin{align*}
R_k=(R_{1,k},R_{2,k})
=\left(K_1\left(\frac{|y|}{\mu}\right)W_2^p-\sum_{j=1}^k(V_{\overline{x}_j,\Lambda}^p+V_{\underline{x}_j,\Lambda}^p),
K_2\left(\frac{|y|}{\mu}\right)W_1^q-\sum_{j=1}^k(U_{\overline{x}_j,\Lambda}^q+U_{\underline{x}_j,\Lambda}^q) \right).
\end{align*}

In the following part, we first give the estimates of $N_k(\phi_1, \phi_2)$ and $R_k$.
\begin{lemma}\label{nk}
Suppose that $N\geq 5$. There exists $C>0$ such that
\begin{align*}
\|N_k(\phi_1, \phi_2)\|_{**}\leq C\|(\phi_1, \phi_2)\|_{*}^{\min\{p,2\}}.
\end{align*}
\end{lemma}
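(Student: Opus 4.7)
The plan is to establish the bound pointwise via Taylor-expansion remainders and then read off the $\|\cdot\|_{**}$ estimate from the definitions of $\|\cdot\|_*$ and $\|\cdot\|_{**}$ together with a combinatorial inequality on sums of decay kernels. First I would decompose $N_k(\phi_1,\phi_2) = (N_{1,k}(\phi_2), N_{2,k}(\phi_1))$; since $\|(u,v)\|_{**} = \|u\|_{**} + \|v\|_{**}$ and $K_1, K_2$ are uniformly bounded, it suffices to estimate $N_{1,k}$ and $N_{2,k}$ separately. Then I apply the elementary inequality
\begin{equation*}
\bigl|(a+b)^s - a^s - s\,a^{s-1}b\bigr| \;\leq\; C\begin{cases} |b|^s, & 1 < s \leq 2,\\[1mm] a^{s-2}b^2 + |b|^s, & s > 2,\end{cases}
\end{equation*}
with $(a,b,s) = (W_2,\phi_2,p)$ for $N_{1,k}$ and $(W_1,\phi_1,q)$ for $N_{2,k}$. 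This reduces the matter to pointwise estimates of $|\phi_i|^s$ and, in the $s>2$ regime, of the cross term $W_i^{s-2}\phi_i^2$.

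Next I would introduce the shorthand
\begin{equation*}
\Sigma_*(y) = \sum_{j=1}^k \Bigl[(1+|y-\overline{x}_j|)^{-\frac{N-2}{2}-\tau} + (1+|y-\underline{x}_j|)^{-\frac{N-2}{2}-\tau}\Bigr],
\end{equation*}
\begin{equation*}
\Sigma_{**}(y) = \sum_{j=1}^k \Bigl[(1+|y-\overline{x}_j|)^{-\frac{N+2}{2}-\tau} + (1+|y-\underline{x}_j|)^{-\frac{N+2}{2}-\tau}\Bigr],
\end{equation*}
\begin{equation*}
\Sigma_0(y) = \sum_{j=1}^k \Bigl[(1+|y-\overline{x}_j|)^{-(N-2)} + (1+|y-\underline{x}_j|)^{-(N-2)}\Bigr],
\end{equation*}
so that by definition of $\|\cdot\|_*$, one has $|\phi_i(y)| \leq \|\phi_i\|_*\,\Sigma_*(y)$, while the standard decay of $U_{x_j,\Lambda}$ and $V_{x_j,\Lambda}$ gives $W_i(y) \leq C\,\Sigma_0(y)$. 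The crux is then the pointwise majorization $\Sigma_*^s \leq C\,\Sigma_{**}$ (for the $|\phi|^s$ contribution) and $\Sigma_0^{s-2}\Sigma_*^2 \leq C\,\Sigma_{**}$ (for the cross term). The first rests on the choice \eqref{tau} of $\tau$, which yields $s\bigl(\tfrac{N-2}{2}+\tau\bigr) \geq \tfrac{N+2}{2}+\tau$ for $s \geq p$, combined with the standard Wei-Yan type combinatorial lemma absorbing products of kernels anchored at distinct, well-separated centers. The cross-term inequality reduces to $(s-1)(N-2)+\tau \geq \tfrac{N+2}{2}$, which is automatic under \eqref{p+} for $s=p$ and even more so for $s=q$.

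Assembling: for $p \leq 2$ the $N_{1,k}$-component is bounded by $C\|\phi_2\|_*^p$, while for $p > 2$ it is bounded by $C(\|\phi_2\|_*^2 + \|\phi_2\|_*^p)$; since $\|(\phi_1,\phi_2)\|_* = o(1)$ in the regime we work in, the exponent $\min\{p,2\}$ is the dominant one. For $N_{2,k}$ we are always in the $q>2$ case (because $q > \tfrac{N+2}{N-2} \geq \tfrac{7}{3} > 2$ for $N\geq 5$), yielding the exponent $2 \geq \min\{p,2\}$. Summing the two contributions gives the desired estimate $\|N_k(\phi_1,\phi_2)\|_{**} \leq C\|(\phi_1,\phi_2)\|_*^{\min\{p,2\}}$.

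The main obstacle is the combinatorial inequalities $\Sigma_*^s \leq C\,\Sigma_{**}$ and $\Sigma_0^{s-2}\Sigma_*^2 \leq C\,\Sigma_{**}$ uniformly in $k$. They hinge on the fact that, for the Wei-Yan cylindrical configuration $\{\overline{x}_j,\underline{x}_j\}$, the power of a sum of decay kernels can be absorbed into a single sum with a larger exponent whenever the target exponent $\tfrac{N+2}{2}+\tau$ is at most $s\bigl(\tfrac{N-2}{2}+\tau\bigr)$. This is guaranteed precisely by \eqref{tau}, together with the angular separation $|\overline{x}_i - \overline{x}_j|, |\overline{x}_i - \underline{x}_j|\gtrsim \mu/k$ coming from $(r,h,\Lambda)\in\wp_k$.
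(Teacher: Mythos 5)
Your proposal follows essentially the same route as the paper: split $N_k$ into $N_{1,k},N_{2,k}$, use the elementary Taylor-remainder inequality in the two regimes $1<s\leq 2$ and $s>2$, bound $|\phi_i|$ by $\|\phi_i\|_*\Sigma_*$ and $W_i$ by $C\Sigma_0\leq C\Sigma_*$, and absorb $\Sigma_*^{\,s}$ into $\Sigma_{**}$ via H\"older together with the choice \eqref{tau} of $\tau$ and the $\mu/k$-separation of the centers; the paper carries this out for $N_{1,k}$ and treats $N_{2,k}$ "in the same way." One slip in your write-up: the claim that one is always in the $q>2$ case because $q>\frac{N+2}{N-2}\geq \frac{7}{3}$ for $N\geq 5$ is false, since $\frac{N+2}{N-2}$ is decreasing in $N$ and is below $2$ for $N\geq 7$, so $q\leq 2$ can occur; this is harmless, because in that case the same argument gives the bound $C\|\phi_1\|_*^{q}$ and $q\geq p\geq \min\{p,2\}$, so (using that $\|(\phi_1,\phi_2)\|_*$ is bounded, which you — like the paper — already invoke to absorb $\|\phi\|_*^{p}$ into $\|\phi\|_*^{2}$ when $p>2$) the stated estimate still follows.
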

\begin{proof}
By the definition of $N_k(\phi_1, \phi_2)$, we mainly estimate the term $N_{1,k}(\phi_2)$, since the other term $N_{2,k}(\phi_1)$ can be estimated in the same way. According to the fact that
\begin{equation*}
\left|N_{1,k}(\phi_2)\right|\leq
\begin{cases}
C|\phi_2|^p,\ &\text{ if } 1<p\leq 2;\\
CW_2^{p-2}\phi_2^2+C|\phi_2|^p,\ &\text{ if } p>2,
\end{cases}
\end{equation*}
we have that for $1<p\leq 2$, by H\"{o}lder inequalities,
\begin{align*}
\left|N_{1,k}(\phi_2)\right|\leq &C\|\phi_2\|_{*}^p
\left(\sum_{j=1}^k\frac{1}{(1+|y-\overline{x}_j|)^{\frac{N-2}{2}+\tau}}
+\sum_{j=1}^k\frac{1}{(1+|y-\underline{x}_j|)^{\frac{N-2}{2}+\tau}}\right)^p
\nonumber\\
\leq &C\|\phi_2\|_{*}^p
\left\{\left(\sum_{j=1}^k\frac{1}{(1+|y-\overline{x}_j|)^{\frac{N-2}{2}+\tau}}
\right)^p
+\left(\sum_{j=1}^k\frac{1}{(1+|y-\underline{x}_j|)^{\frac{N-2}{2}+\tau}}\right)^p
\right\}
\nonumber\\
\leq &C\|\phi_2\|_{*}^p\left(\sum_{j=1}^k\frac{1}{(1+|y-\overline{x}_j|)^{\frac{N+2}{2}+\tau}}\right)
\left(\sum_{j=1}^k\frac{1}{(1+|y-\overline{x}_j|)^{\tilde{\tau}}}\right)^{p-1}
\nonumber\\
&+C\|\phi_2\|_{*}^p\left(\sum_{j=1}^k\frac{1}{(1+|y-\underline{x}_j|)^{\frac{N+2}{2}+\tau}}\right)
\left(\sum_{j=1}^k\frac{1}{(1+|y-\underline{x}_j|)^{\tilde{\tau}}}\right)^{p-1}
\nonumber\\
\leq &C\|\phi_2\|_{*}^p
\left(\sum_{j=1}^k\frac{1}{(1+|y-\overline{x}_j|)^{\frac{N+2}{2}+\tau}}
+\sum_{j=1}^k\frac{1}{(1+|y-\underline{x}_j|)^{\frac{N+2}{2}+\tau}}\right),
\end{align*}
where $\tilde{\tau}=\frac{(\frac{N-2}{2}+\tau)p-\frac{N+2}{2}-\tau}{p-1}\geq\frac{N-2-m}{N-2}$ and
$\tau\geq \max\{\frac{N-2-m}{N-2}+\frac{2}{p-1}-\frac{N-2}{2},\frac{N-2-m}{N-2}\}=\frac{N-2-m}{N-2}+\frac{2}{p-1}-\frac{N-2}{2}$.

For $p>2$, we have
\begin{align*}
\left|W_2^{P-2}\phi_2^2\right|\leq &C
\|\phi_2\|_{*}^2\left(\sum_{j=1}^k\frac{1}{(1+|y-\overline{x}_j|)^{\frac{N-2}{2}+\tau}}
+\sum_{j=1}^k\frac{1}{(1+|y-\underline{x}_j|)^{\frac{N-2}{2}+\tau}}\right)^2W_2^{p-2}
\nonumber\\
\leq &C
\|\phi_2\|_{*}^2\left(\sum_{j=1}^k\frac{1}{(1+|y-\overline{x}_j|)^{\frac{N-2}{2}+\tau}}
+\sum_{j=1}^k\frac{1}{(1+|y-\underline{x}_j|)^{\frac{N-2}{2}+\tau}}\right)^2
\nonumber\\
&\times \left(\sum_{j=1}^k\frac{1}{(1+|y-\overline{x}_j|)^{N-2}}
+\sum_{j=1}^k\frac{1}{(1+|y-\underline{x}_j|)^{N-2}}\right)^{p-2}
\nonumber\\
\leq &C
\|\phi_2\|_{*}^2\left(\sum_{j=1}^k\frac{1}{(1+|y-\overline{x}_j|)^{\frac{N-2}{2}+\tau}}
+\sum_{j=1}^k\frac{1}{(1+|y-\underline{x}_j|)^{\frac{N-2}{2}+\tau}}\right)^p
\nonumber\\
\leq &C\|\phi_2\|_{*}^2
\left(\sum_{j=1}^k\frac{1}{(1+|y-\overline{x}_j|)^{\frac{N+2}{2}+\tau}}
+\sum_{j=1}^k\frac{1}{(1+|y-\underline{x}_j|)^{\frac{N+2}{2}+\tau}}\right).
\end{align*}
Thus
\begin{align*}
\left|N_{1,k}(\phi_2)\right|\leq C\|\phi_2\|_{*}^{\min\{p,2\}}.
\end{align*}
Thereforex
\begin{align*}
\|N_k(\phi_1, \phi_2)\|_{**}\leq C\|(\phi_1, \phi_2)\|_{*}^{\min\{p,2\}}.
\end{align*}
\end{proof}

\begin{lemma}\label{rk}
Assume that $||x_1|-\mu|<\frac{1}{\mu^{\hat{\theta}}}$, where  $\hat{\theta}>0$ is a fixed small constant. If $N\geq5$, then there is a constant $\beta$ satisfying
\begin{align*}
&\max\left\{\frac{m}{2}+\frac{(N-3)(N-2-m)}{(N-1)(N-2)},\frac{m+1}{2}+\frac{(N-3)(N-2-m)}{2(N-1)(N-2)}\right\}
<\beta
\nonumber\\
&<\min\left\{m, p(N-2)-\frac{N+2}{2}-\tau\right\},
\end{align*}
such that
\begin{align*}
\|R_k\|_{**} \leq C\max\left\{\left(\frac{k}{\mu}\right)^{p(N-2)-\frac{N+2}{2}-\tau},\left(\frac{1}{\mu}\right)^{\beta}\right\}.
\end{align*}
\end{lemma}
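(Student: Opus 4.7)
The plan is to split $R_k$ componentwise into a \emph{potential error} and an \emph{interaction error}, estimate each one separately in a bubble--centred fashion, and then compare the two resulting rates to obtain the stated maximum. Writing
\begin{align*}
R_{1,k}=\Bigl(K_1\bigl(\tfrac{|y|}{\mu}\bigr)-1\Bigr)\sum_{j=1}^k\bigl(V_{\overline{x}_j,\Lambda}^p+V_{\underline{x}_j,\Lambda}^p\bigr)
+K_1\bigl(\tfrac{|y|}{\mu}\bigr)\Bigl(W_2^p-\sum_{j=1}^k\bigl(V_{\overline{x}_j,\Lambda}^p+V_{\underline{x}_j,\Lambda}^p\bigr)\Bigr),
\end{align*}
and likewise for $R_{2,k}$, the first term is controlled by the radial vanishing of $K_1-1$ together with the decay of a single bubble, while the second term is controlled by the interaction between different bubbles. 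The same scheme works for $R_{2,k}$ with $K_2$, $W_1$, $U$ in place of $K_1$, $W_2$, $V$.

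For the potential piece I would localise on $\Omega_j^{\pm}$ and split further according to whether $\bigl||y|-\mu r_0\bigr|\leq \tfrac{1}{\mu^{\hat\theta}}$ (inner region, where $(K_i)$ gives $|K_i(|y|/\mu)-1|\leq C|y/\mu-r_0|^{m_i}\leq C(|y-\overline{x}_j|/\mu)^{m_i}+C(1/\mu)^{m_i}$) or far from this annulus (outer region, where $K_i$ is bounded and the bubble is $O(\mu^{-\sigma})$ small). Combined with $|V_{\overline{x}_j,\Lambda}^p|\lesssim(1+|y-\overline{x}_j|)^{-p(N-2)}$, this yields a bound of the form $(\mu^{-m})(1+|y-\overline{x}_j|)^{-(N+2)/2-\tau-(\text{something})}$ after carefully redistributing powers so as to absorb the weight $(1+|y-\overline{x}_j|)^{m}$ into the decay. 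Using $m=\min\{m_1,m_2\}$ and relaxing $m_i\mapsto\beta<m$ gives the $(1/\mu)^{\beta}$ contribution, with the upper constraint $\beta<m$ and the lower constraints on $\beta$ arising from the power we can afford to give up to keep the weight integrable.

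For the interaction piece I would work in $\Omega_j^+$ (the other regions are analogous by symmetry), where $V_{\overline{x}_j,\Lambda}$ is the dominant bubble. The elementary inequality
\begin{align*}
\Bigl|\bigl(V_{\overline{x}_j,\Lambda}+\eta\bigr)^p-V_{\overline{x}_j,\Lambda}^p-\eta^p\Bigr|
\leq C\begin{cases}V_{\overline{x}_j,\Lambda}^{p-1}\eta+V_{\overline{x}_j,\Lambda}\,\eta^{p-1},&p>2,\\ V_{\overline{x}_j,\Lambda}^{p-1}\eta,&1<p\leq 2,\end{cases}
\end{align*}
with $\eta:=\sum_{i\neq j}V_{\overline{x}_i,\Lambda}+\sum_i V_{\underline{x}_i,\Lambda}$, reduces everything to estimating sums of bubble-bubble interactions. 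Using the standard convolution-type inequalities already invoked in Lemma \ref{lpL} and the fact that the nearest-neighbour distance on each circle is $\sim r/k\sim\mu/k$ while the vertical spacing between the two circles is $\sim r h\sim \mu/k^{(N-3)/(N-1)}$, the dominant interaction is of size $(k/\mu)^{p(N-2)-(N+2)/2-\tau}$, provided the exponent is positive (which is the content of $\beta<p(N-2)-(N+2)/2-\tau$).

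The main obstacle, and the reason the interval for $\beta$ is nonempty, is to match these two competing rates while keeping the weight $\sum_j(1+|y-\overline{x}_j|)^{-(N+2)/2-\tau}$ as the target. This requires the explicit lower bounds $\beta>\tfrac{m}{2}+\tfrac{(N-3)(N-2-m)}{(N-1)(N-2)}$ and $\beta>\tfrac{m+1}{2}+\tfrac{(N-3)(N-2-m)}{2(N-1)(N-2)}$, which come from, respectively, the $h$-cross term between the top and bottom circles (vertical interaction) and the $r$-tangential term (same-circle interaction) once rewritten in the $*$-weighted norm. One checks that the hypothesis (\ref{m+}) on $m$ together with (\ref{p+}), (\ref{q+}) and the definition of $\tau$ in (\ref{tau}) makes the interval for $\beta$ nonempty, and then the maximum of the two rates $\max\{(k/\mu)^{p(N-2)-(N+2)/2-\tau},(1/\mu)^{\beta}\}$ gives the claimed bound. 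The symmetric bound for $R_{2,k}$ is strictly easier because $q>(N+2)/(N-2)$ gives more room in the interaction exponent, so the same $\beta$ works.
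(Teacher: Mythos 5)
Your proposal follows essentially the same route as the paper: reduce to $R_{1,k}$ since $p\leq q$, split it into the interaction error $K_1(\tfrac{|y|}{\mu})\bigl(W_2^p-\sum_{j}(V_{\overline{x}_j,\Lambda}^p+V_{\underline{x}_j,\Lambda}^p)\bigr)$ and the potential error $\sum_j\bigl(K_1(\tfrac{|y|}{\mu})-1\bigr)(V_{\overline{x}_j,\Lambda}^p+V_{\underline{x}_j,\Lambda}^p)$, work on $\Omega_1^+$ by symmetry, and use the same elementary inequality plus the bubble-interaction estimates with spacings $\sim r/k$ and $\sim rh$ to produce precisely the two rates $(k/\mu)^{p(N-2)-\frac{N+2}{2}-\tau}$ and $\mu^{-\beta}$ with $\beta<m$. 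Two small corrections: the inner/outer split for the potential term must be taken at a fixed fraction of $\mu$ (the paper uses $\bigl|\tfrac{|y|}{\mu}-1\bigr|\gtrless\delta_1$), since on $\{||y|-\mu r_0|\geq \mu^{-\hat{\theta}}\}$ the bubble need not be $O(\mu^{-\sigma})$ small, while the Taylor-type bound you wrote for $K_i-1$ is exactly the one valid on the whole $\delta$-neighbourhood; and the lower bounds on $\beta$ are not produced inside this proof (only $\beta<m$ and $\beta\leq p(N-2)-\frac{N+2}{2}-\tau$ are used here) but are hypotheses recorded so that the later energy estimates in Propositions \ref{energyex} and \ref{fh} go through, so attributing them to the vertical and tangential interaction terms of this lemma is not accurate.
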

\begin{proof}
Since $p\leq q$, we only have to estimate $\|R_{1,k}\|_{**}$. By the definition of $R_{1,k}$, we have
\begin{align*}
R_{1,k}=&K_1\left(\frac{|y|}{\mu}\right)W_2^p-\sum_{j=1}^k\left(V_{\overline{x}_j,\Lambda}^p+V_{\underline{x}_j,\Lambda}^p\right)
\nonumber\\
=&K_1\left(\frac{|y|}{\mu}\right)\left(W_2^p-\sum_{j=1}^k\left(V_{\overline{x}_j,\Lambda}^p+V_{\underline{x}_j,\Lambda}^p\right)\right)
+\sum_{j=1}^k\left\{K_1\left(\frac{|y|}{\mu}\right)-1\right\}\left(V_{\overline{x}_j,\Lambda}^p+V_{\underline{x}_j,\Lambda}^p\right)
\nonumber\\
:=&G_1+G_2.
\end{align*}
By symmetry, we assume that $y\in \Omega^+_1$,
\begin{align}\label{j1}
G_1=&K_1\left(\frac{|y|}{\mu}\right)\left(W_2^p-\sum_{j=1}^k\left(V_{\overline{x}_j,\Lambda}^p+V_{\underline{x}_j,\Lambda}^p\right)\right)
\nonumber\\
=&K_1\left(\frac{|y|}{\mu}\right)\left(\left(\sum_{j=1}^kV_{\overline{x}_j,\Lambda}+\sum_{j=1}^kV_{\underline{x}_j,\Lambda}\right)^p
-\sum_{j=1}^k\left(V_{\overline{x}_j,\Lambda}^p+V_{\underline{x}_j,\Lambda}^p\right)\right)
\nonumber\\
\leq &CK_1\left(\frac{|y|}{\mu}\right)
\left\{V_{\overline{x}_1,\Lambda}^{p-1}\left(\sum_{j=2}^kV_{\overline{x}_j,\Lambda}+\sum_{j=1}^kV_{\underline{x}_j,\Lambda}\right)
+\left(\sum_{j=2}^kV_{\overline{x}_j,\Lambda}+\sum_{j=1}^kV_{\underline{x}_j,\Lambda}\right)^p
\right\}
\nonumber\\
\leq &C\Bigg\{\frac{1}{(1+|y-\overline{x}_1|)^{(N-2)(p-1)}}
\left(\sum_{j=2}^k\frac{1}{(1+|y-\overline{x}_j|)^{N-2}}+\sum_{j=1}^k\frac{1}{(1+|y-\underline{x}_j|)^{N-2}}\right)
\nonumber\\
&+\left(\sum_{j=2}^k\frac{1}{(1+|y-\overline{x}_j|)^{N-2}}\right)^p
\Bigg\}
:=G_{11}+G_{12}+G_{13}.
\end{align}
By \cite[Lemma B.1]{duanmw}, for any $1<\alpha_1<N-2$ and $y\in \Omega^+_1$,
\begin{align*}
\sum_{j=2}^k\frac{1}{(1+|y-\overline{x}_j|)^{N-2}}\leq \frac{C}{(1+|y-\overline{x}_1|)^{N-2-\alpha_1}}\sum_{j=2}^k\frac{1}{|\overline{x}_1-\overline{x}_j|^{\alpha_1}}.
\end{align*}
We choose $\alpha_1$ satisfying $\alpha_1=p(N-2)-\frac{N+2}{2}-\tau<\frac{N+2}{2}-\tau<N-2$, then
\begin{align}\label{j11}
G_{11}\leq
&\frac{C}{(1+|y-\overline{x}_1|)^{p(N-2)-\alpha_1}}\sum_{j=2}^k\frac{1}{|\overline{x}_1-\overline{x}_j|^{\alpha_1}}
\nonumber\\
\leq &
\frac{C}{(1+|y-\overline{x}_1|)^{\frac{N+2}{2}+\tau}}\left(\frac{k}{r\sqrt{1-h^2}}\right)^{\alpha_1}
\nonumber\\
\leq &
\frac{C}{(1+|y-\overline{x}_1|)^{\frac{N+2}{2}+\tau}}
\left(\frac{k}{r}\right)^{p(N-2)-\frac{N+2}{2}-\tau}.
\end{align}
By $|y-\underline{x}_j|\geq |y-\overline{x}_j|$ for $y\in \Omega^+_1$, we get
\begin{align*}
G_{12}=&\frac{C}{(1+|y-\overline{x}_1|)^{(N-2)(p-1)}}
\left(\frac{1}{(1+|y-\underline{x}_1|)^{N-2}}+\sum_{j=2}^k\frac{1}{(1+|y-\underline{x}_j|)^{N-2}}\right)
\nonumber\\
\leq &
\frac{C}{(1+|y-\overline{x}_1|)^{(N-2)(p-1)}}
\left(\frac{1}{(1+|y-\underline{x}_1|)^{N-2}}+\sum_{j=2}^k\frac{1}{(1+|y-\overline{x}_j|)^{N-2}}\right).
\end{align*}
Choose $\alpha_1$ satisfying $\alpha_1=p(N-2)-\frac{N+2}{2}-\tau<\frac{N+2}{2}-\tau<N-2$, it follows from $hr>\frac{cr}{k}$ that
\begin{align*}
&\frac{C}{(1+|y-\overline{x}_1|)^{(N-2)(p-1)}}\frac{1}{(1+|y-\underline{x}_1|)^{N-2}}
\nonumber\\
\leq
&\left[\frac{C}{(1+|y-\overline{x}_1|)^{p(N-2)-\alpha_1}}+\frac{C}{(1+|y-\underline{x}_1|)^{p(N-2)-\alpha_1}}\right]\frac{1}{|\overline{x}_1-\underline{x}_1|^{\alpha_1}}
\nonumber\\
\leq
&\frac{C}{(1+|y-\overline{x}_1|)^{\frac{N+2}{2}+\tau}}\frac{1}{(hr)^{\alpha_1}}
\leq \frac{C}{(1+|y-\overline{x}_1|)^{\frac{N+2}{2}+\tau}}\left(\frac{k}{r}\right)^{\alpha_1}.
\end{align*}
Therefore,
\begin{align}\label{j12}
G_{12}\leq \frac{C}{(1+|y-\overline{x}_1|)^{\frac{N+2}{2}+\tau}}\left(\frac{k}{r}\right)^{\alpha_1}.
\end{align}
For $y\in \Omega^+_1$, we have $|y-\overline{x}_j|\geq |y-\overline{x}_1|$. Then
\begin{align*}
\sum_{j=2}^k\frac{1}{(1+|y-\overline{x}_j|)^{N-2}}
\leq
&\sum_{j=2}^k\frac{1}{(1+|y-\overline{x}_1|)^{\frac{N-2}{2}}}\frac{1}{(1+|y-\overline{x}_j|)^{\frac{N-2}{2}}}
\nonumber\\
\leq
&\sum_{j=2}^k\frac{1}{|\overline{x}_1-\overline{x}_j|^{\frac{N-2}{2}-[\frac{1}{p}(\frac{N+2}{2}+\tau)-\frac{N-2}{2}]}}
\frac{1}{(1+|y-\overline{x}_1|)^{\frac{N-2}{2}+[\frac{1}{p}(\frac{N+2}{2}+\tau)-\frac{N-2}{2}]}}
\nonumber\\
\leq
&\sum_{j=2}^k\frac{1}{|\overline{x}_1-\overline{x}_j|^{N-2-\frac{1}{p}(\frac{N+2}{2}+\tau)}}
\frac{1}{(1+|y-\overline{x}_1|)^{\frac{1}{p}(\frac{N+2}{2}+\tau)}}.
\end{align*}
By the above inequality, we get
\begin{align}\label{j13}
G_{13}\leq \frac{C}{(1+|y-\overline{x}_1|)^{\frac{N+2}{2}+\tau}}\left(\frac{k}{r}\right)^{p(N-2)-\frac{N+2}{2}-\tau}
\leq \frac{C}{(1+|y-\overline{x}_1|)^{\frac{N+2}{2}+\tau}}\left(\frac{k}{r}\right)^{\alpha_1}.
\end{align}
From (\ref{j1})-(\ref{j13}), we have
\begin{align*}
\|G_{1}\|_{**}=O\left(\left(\frac{k}{r}\right)^{\alpha_1}\right).
\end{align*}

Next, we estimate $G_{2}$. In fact, for $y\in \Omega^+_1$,
\begin{align}\label{j2}
G_{2}\leq& 2\sum_{j=1}^k\left\{K_1\left(\frac{|y|}{\mu}\right)-1\right\}V_{\overline{x}_j,\Lambda}^p
\nonumber\\
=&2\left\{K_1\left(\frac{|y|}{\mu}\right)-1\right\}V_{\overline{x}_1,\Lambda}^p
+2\sum_{j=2}^k\left\{K_1\left(\frac{|y|}{\mu}\right)-1\right\}V_{\overline{x}_j,\Lambda}^p
:=G_{21}+G_{22}.
\end{align}
If $\left|\frac{|y|}{\mu}-1\right|\geq \delta_1$ with $\delta>\delta_1>0$ and $y\in \Omega^+_1$, then
\begin{align*}
|y-\overline{x}_1|\geq ||y|-\mu|-|\mu-|\overline{x}_1||\geq \frac{1}{2}\delta_1\mu.
\end{align*}
Moreover,
\begin{align}\label{j211}
\left\{K_1\left(\frac{|y|}{\mu}\right)-1\right\}V_{\overline{x}_1,\Lambda}^p
\leq &
\frac{C}{(1+|y-\overline{x}_1|)^{p(N-2)}}
\nonumber\\
\leq &
\frac{C}{(1+|y-\overline{x}_1|)^{\frac{N+2}{2}+\tau}}\left(\frac{1}{\mu}\right)^{p(N-2)-\frac{N+2}{2}-\tau}
\leq
\frac{C}{(1+|y-\overline{x}_1|)^{\frac{N+2}{2}+\tau}}\left(\frac{1}{\mu}\right)^{\alpha_1}.
\end{align}
If $\left|\frac{|y|}{\mu}-1\right|\leq \delta_1$, we obtain
\begin{align*}
K_1\left(\frac{|y|}{\mu}\right)-1 \leq
C\left|\frac{|y|}{\mu}-1\right|^{m_1}
\leq \frac{C}{\mu^{m_1}}\left[||y|-|\overline{x}_1||^{m_1}+||\overline{x}_1|-\mu|^{m_1}\right]
\leq \frac{C}{\mu^{m_1}}\left[||y|-|\overline{x}_1||^{m_1}+\frac{1}{k^{\bar{\theta}m_1 }}\right].
\end{align*}
As a result, we get that for $\beta<m_1$,
\begin{align}\label{j212}
\left\{K_1\left(\frac{|y|}{\mu}\right)-1\right\}V_{\overline{x}_1,\Lambda}^p
\leq &\frac{C}{\mu^{m_1}}\left[||y|-|\overline{x}_1||^{m_1}+\frac{1}{k^{\bar{\theta}m_1 }}\right]
\frac{1}{(1+|y-\overline{x}_1|)^{p(N-2)}}
\nonumber\\
\leq &\frac{C}{\mu^{\beta}}\left[\frac{||y|-|\overline{x}_1||^{\beta}}{(1+|y-\overline{x}_1|)^{p(N-2)}}
+\frac{1}{\mu^{m_1-\beta}}\frac{1}{k^{\bar{\theta}m_1 }}\frac{1}{(1+|y-\overline{x}_1|)^{p(N-2)}}\right]
\nonumber\\
\leq &\frac{C}{\mu^{\beta}}\Bigg[\frac{1}{(1+|y-\overline{x}_1|)^{\frac{N+2}{2}+\tau}}
\frac{1}{(1+|y-\overline{x}_1|)^{p(N-2)-\frac{N+2}{2}-\tau-\beta}}
\nonumber\\
&+\frac{1}{(1+|y-\overline{x}_1|)^{p(N-2)}}
\Bigg]
\nonumber\\
\leq &\frac{C}{\mu^{\beta}}\frac{1}{(1+|y-\overline{x}_1|)^{\frac{N+2}{2}+\tau}}.
\end{align}
It follows from (\ref{j211})-(\ref{j212}) that
\begin{align}\label{j21}
G_{21}\leq C\max\left\{\left(\frac{1}{\mu}\right)^{\alpha_1}, \frac{1}{\mu^{\beta}}\right\}\frac{1}{(1+|y-\overline{x}_1|)^{\frac{N+2}{2}+\tau}}.
\end{align}
As to $G_{22}$. For $y\in \Omega^+_1$, $j=2,\cdots, k$,
\begin{align*}
|\overline{x}_1-\overline{x}_j|\leq |y-\overline{x}_1|+|y-\overline{x}_j|\leq 2|y-\overline{x}_j|.
\end{align*}
Then
\begin{align}\label{j22}
J_{22}\leq &C\frac{1}{(1+|y-\overline{x}_1|)^{\frac{N+2}{2}}}\sum_{j=2}^k\frac{1}{(1+|y-\overline{x}_j|)^{p(N-2)-\frac{N+2}{2}}}
\nonumber\\
\leq &
\frac{C}{(1+|y-\overline{x}_1|)^{\frac{N+2}{2}+\tau}}\sum_{j=2}^k
\frac{1}{|\overline{x}_1-\overline{x}_j|^{p(N-2)-\frac{N+2}{2}-\tau}}
\nonumber\\
\leq &
\frac{C}{(1+|y-\overline{x}_1|)^{\frac{N+2}{2}+\tau}}\left(\frac{k}{\mu}\right)^{p(N-2)-\frac{N+2}{2}-\tau}.
\end{align}
In virtue of (\ref{j21})-(\ref{j22}), we get
\begin{align*}
\|G_{2}\|_{**}=O\left(\max\left\{\left(\frac{k}{\mu}\right)^{\alpha_1}, \frac{1}{\mu^{\beta}}\right\}\right).
\end{align*}
In conclusion,
\begin{align*}
\|R_{1,k}\|_{**}=O\left(\max\left\{\left(\frac{k}{\mu}\right)^{\alpha_1}, \frac{1}{\mu^{\beta}}\right\}\right).
\end{align*}
\end{proof}

\begin{proposition}\label{cmphi}
Suppose that $K_i(|y|)$ satisfies $(K_i)$ with $i=1,2$ and $N\geq 5$, $(r,h,\Lambda)\in \wp_k$. There exists $k_0$ such that for all $k\geq k_0$, problem (\ref{nlp}) has a unique solution $(\phi_1, \phi_2)$ which satisfies
\begin{align*}
\|(\phi_1, \phi_2)\|_{*}\leq C\max\left\{\left(\frac{k}{\mu}\right)^{\alpha_1}, \frac{1}{\mu^{\beta}}\right\}.
\end{align*}
Moreover,
\begin{align*}
|c_l|\leq \frac{C}{1+\delta_{l2}r}\max\left\{\left(\frac{k}{\mu}\right)^{\alpha_1}, \frac{1}{\mu^{\beta}}\right\}.
\end{align*}
\end{proposition}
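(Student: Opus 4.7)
The plan is to solve the nonlinear problem \eqref{nlp1} by a standard contraction mapping argument, using the linear theory of Proposition \ref{lpLc} to invert $L_k$ on the orthogonal complement $\mathbb{E}$ and then treating the right-hand side as a perturbation built from $R_k$ and the quadratic remainder $N_k$.

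First I would rewrite \eqref{nlp1} as the fixed point equation
\begin{equation*}
(\phi_1,\phi_2)=\mathbb{T}(\phi_1,\phi_2):=\mathbb{L}_k\bigl(R_k+N_k(\phi_1,\phi_2)\bigr),
\end{equation*}
where $\mathbb{L}_k$ is the linear inverse provided by Proposition \ref{lpLc}. Set
\begin{equation*}
\eta_k:=\max\!\left\{\left(\frac{k}{\mu}\right)^{\!\alpha_1},\;\frac{1}{\mu^{\beta}}\right\},
\qquad \mathcal{B}_k:=\bigl\{(\phi_1,\phi_2)\in\mathbb{E}:\|(\phi_1,\phi_2)\|_{*}\le C_{*}\eta_k\bigr\},
\end{equation*}
with $C_{*}$ a large constant to be fixed, independent of $k$. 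Using $\mu=k^{(N-2)/(N-2-m)}$ one checks directly that $\eta_k\to 0$ as $k\to\infty$.

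Next I would verify that $\mathbb{T}$ maps $\mathcal{B}_k$ into itself. By Proposition \ref{lpLc}, Lemma \ref{rk} and Lemma \ref{nk},
\begin{equation*}
\|\mathbb{T}(\phi_1,\phi_2)\|_{*}\le C\bigl(\|R_k\|_{**}+\|N_k(\phi_1,\phi_2)\|_{**}\bigr)\le C\eta_k+C\bigl(C_{*}\eta_k\bigr)^{\min\{p,2\}}.
\end{equation*}
Since $\min\{p,2\}>1$ and $\eta_k\to 0$, the second term is $o(\eta_k)$ as $k\to\infty$, so choosing $C_{*}$ larger than $2C$ and $k$ large enough yields $\|\mathbb{T}(\phi_1,\phi_2)\|_{*}\le C_{*}\eta_k$. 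For the contraction property, the estimate in the proof of Lemma \ref{nk} adapts to give a Lipschitz bound
\begin{equation*}
\|N_k(\phi)-N_k(\psi)\|_{**}\le C\bigl(\|\phi\|_{*}^{\min\{p,2\}-1}+\|\psi\|_{*}^{\min\{p,2\}-1}\bigr)\|\phi-\psi\|_{*},
\end{equation*}
by writing $N_k$ as a difference quotient of the nonlinearities and applying the same weighted H\"older arguments that gave the bound on $N_k(\phi)$ itself. On $\mathcal{B}_k$ this yields $\|\mathbb{T}(\phi)-\mathbb{T}(\psi)\|_{*}\le C\eta_k^{\,\min\{p,2\}-1}\|\phi-\psi\|_{*}$, whose constant tends to zero, so $\mathbb{T}$ is a strict contraction on $\mathcal{B}_k$ for all $k\ge k_0$. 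The Banach fixed point theorem then produces the unique solution $(\phi_1,\phi_2)\in\mathcal{B}_k$ and simultaneously produces the Lagrange multipliers $c_l$.

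For the bound on $c_l$, I would invoke the second estimate in Proposition \ref{lpLc} applied to the source term $R_k+N_k(\phi_1,\phi_2)$; combining $\|R_k\|_{**}=O(\eta_k)$ with $\|N_k(\phi_1,\phi_2)\|_{**}\le C(C_{*}\eta_k)^{\min\{p,2\}}=o(\eta_k)$ gives
\begin{equation*}
|c_l|\le \frac{C}{1+\delta_{l2}r}\,\bigl(\|R_k\|_{**}+\|N_k(\phi_1,\phi_2)\|_{**}\bigr)\le \frac{C}{1+\delta_{l2}r}\,\eta_k,
\end{equation*}
which is the advertised estimate. The only delicate point is the Lipschitz estimate for $N_k$ in the case $p>2$, where one needs the elementary inequality $||a+s|^p-|a+t|^p-ps(p-1)|a|^{p-2}(s-t)|\le C(|s|^{p-2}+|t|^{p-2}+|a|^{p-2})|s-t|$ (or a convenient variant via the mean value theorem) so that the weighted-norm machinery of Lemma \ref{nk} can be re-run; this is routine given the pointwise bounds on $W_2$ already recorded. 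With this ingredient in place the argument closes and Proposition \ref{cmphi} follows.
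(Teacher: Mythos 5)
Your proposal is correct and follows essentially the same route as the paper: rewrite \eqref{nlp1} as a fixed-point equation for $\mathbb{L}_k(R_k+N_k(\cdot))$ on a ball of radius $C\max\{(k/\mu)^{\alpha_1},\mu^{-\beta}\}$, verify self-mapping via Proposition \ref{lpLc}, Lemma \ref{nk} and Lemma \ref{rk}, obtain the contraction from a Lipschitz estimate on $N_k$, and read off the $c_l$ bound from the second estimate in Proposition \ref{lpLc}. The only cosmetic difference is that the paper states the Lipschitz bound for $N_{1,k}$ with exponent $p-1$ (and handles the weights exactly as in Lemma \ref{nk}), which matches the variant you flag for $p>2$.
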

\begin{proof}
Let us define
\begin{align*}
B=\left\{(v_1,v_2): (v_1,v_2)\in E, \|(v_1,v_2)\|_{*}\leq C\max\left\{\left(\frac{k}{\mu}\right)^{\alpha_1}, \frac{1}{\mu^{\beta}}\right\}   \right\}.
\end{align*}
Then, (\ref{nlp1}) is equivalent to
\begin{align*}
(\phi_1,\phi_2)=A(\phi_1,\phi_2):=L_k(N_k(\phi_1, \phi_2))+L_k(R_k).
\end{align*}
We will prove that $A$ is a contraction map from $B$ to $B$. In fact,
\begin{align*}
\|A(\phi_1,\phi_2)\|_{*}
\leq &C\|N_k(\phi_1, \phi_2)\|_{**}+\|R_k\|_{**}
\nonumber\\
\leq &C\|(\phi_1, \phi_2)\|_{*}^{\min\{p,2\}}+C\max\left\{\left(\frac{k}{\mu}\right)^{\alpha_1}, \frac{1}{\mu^{\beta}}\right\}
\leq C\max\left\{\left(\frac{k}{\mu}\right)^{\alpha_1}, \frac{1}{\mu^{\beta}}\right\}.
\end{align*}
Thus, $A$ maps $B$ to $B$.

On the other hand, for any $(\phi_1, \phi_2),(\tilde{\phi}_1, \tilde{\phi}_2)\in B$, we have
\begin{align*}
\|A(\phi_1,\phi_2)-A(\tilde{\phi}_1,\tilde{\phi}_2)\|_{*}
\leq \|N_k(\phi_1, \phi_2)-N_k(\tilde{\phi}_1, \tilde{\phi}_2)\|_{**}.
\end{align*}
Since
\begin{align*}
\left|N_{1,k}(\phi_2)-N_{1,k}(\tilde{\phi}_2)\right|
=&\left|K_1\left(\frac{|y|}{\mu}\right)\left[(W_2+\phi_2)^p-pW_2^{p-1}\phi_2
-(W_2+\tilde{\phi}_2)^p+pW_2^{p-1}\tilde{\phi}_2
\right]
\right|
\nonumber\\
\leq &C\left|(W_2+\phi_2)^p-(W_2+\tilde{\phi}_2)^p-pW_2^{p-1}(\phi_2-\tilde{\phi}_2)\right|
\nonumber\\
\leq &C(\|\phi_2\|_{*}^{p-1}+\|\tilde{\phi}_2\|_{*}^{p-1})\|\phi_2-\tilde{\phi}_2\|_{*}
\nonumber\\
&\times \left(\sum_{j=1}^k\frac{1}{(1+|y-\overline{x}_j|)^{\frac{N-2}{2}+\tau}}
+\sum_{j=1}^k\frac{1}{(1+|y-\underline{x}_j|)^{\frac{N-2}{2}+\tau}}\right)^p
\nonumber\\
\leq &C(\|\phi_2\|_{*}^{p-1}+\|\tilde{\phi}_2\|_{*}^{p-1})\|\phi_2-\tilde{\phi}_2\|_{*}
\nonumber\\
&\times \left(\sum_{j=1}^k\frac{1}{(1+|y-\overline{x}_j|)^{\frac{N+2}{2}+\tau}}
+\sum_{j=1}^k\frac{1}{(1+|y-\underline{x}_j|)^{\frac{N+2}{2}+\tau}}\right).
\end{align*}
Similarly, we obtain
\begin{align*}
\left|N_{2,k}(\phi_1)-N_{2,k}(\tilde{\phi}_1)\right|
\leq &C(\|\phi_1\|_{*}^{p-1}+\|\tilde{\phi}_1\|_{*}^{p-1})\|\phi_1-\tilde{\phi}_1\|_{*}
\nonumber\\
&\times \left(\sum_{j=1}^k\frac{1}{(1+|y-\overline{x}_j|)^{\frac{N+2}{2}+\tau}}
+\sum_{j=1}^k\frac{1}{(1+|y-\underline{x}_j|)^{\frac{N+2}{2}+\tau}}\right).
\end{align*}
Above all,
\begin{align*}
\|A(\phi_1,\phi_2)-A(\tilde{\phi}_1,\tilde{\phi}_2)\|_{*}
\leq o(1)\|(\phi_1, \phi_2)-(\tilde{\phi}_1, \tilde{\phi}_2)\|_{*}.
\end{align*}
Thus, $A$ is a contraction map.
It follows from the contraction mapping theorem that there is a unique $(\phi_1, \phi_2)\in B$, such that
\begin{align*}
(\phi_1,\phi_2)=A(\phi_1,\phi_2).
\end{align*}
Then from Proposition \ref{lpLc}, we can get the estimates of $(\phi_1,\phi_2)$ and $c_l$.
\end{proof}

\section{Proof of the existence result}\label{existence}

Define
\begin{align*}
F(r,h,\Lambda):=I(W_1+\phi_1,W_2+\phi_2),
\end{align*}
and
\begin{align*}
I(u,v)=\int_{\mathbb{R}^N}\nabla u\nabla v
-\frac{1}{p+1}\int_{\mathbb{R}^N}K_1\left(\frac{|y|}{\mu}\right)|v|^{p+1}
-\frac{1}{q+1}\int_{\mathbb{R}^N}K_2\left(\frac{|y|}{\mu}\right)|u|^{q+1}.
\end{align*}

\begin{proposition}\label{energyex}
Suppose that $K_i(|y|)$ satisfies $(K_i)$ and $N\geq 5$, $(r,h,\Lambda)\in \wp_k$,
we have the following expansion
\begin{align*}
F(r,h,\Lambda)=&I(W_1,W_2)+kO\left(\frac{1}{k^{\frac{m(N-2)}{N-2-m}+\frac{2(N-3)}{N-1}+\theta}}\right)
\nonumber\\
=&kA_1-\frac{k}{\Lambda^{N-2}}\left[\frac{B_4k^{N-2}}{(r\sqrt{1-h^2})^{N-2}}
-\frac{B_5k}{r^{N-2}h^{N-3}\sqrt{1-h^2}}\right]
\nonumber\\
&+k\left[\frac{A_2}{\mu^{m_2}\Lambda^{m_2}}+\frac{\bar{A}_2}{\mu^{m_1}\Lambda^{m_1}}\right]
+k\left[\frac{A_3}{\mu^{m_2}\Lambda^{m_2-2}}+\frac{\bar{A}_3}{\mu^{m_1}\Lambda^{m_1-2}}\right](\mu-r)^2
\nonumber\\
&+k\frac{C(r,\Lambda)}{\mu^m}(\mu-r)^{\theta+2}+k\frac{C(r,\Lambda)}{\mu^{m+\theta}}
+kO\left(\frac{1}{k^{\frac{m(N-2)}{N-2-m}+\frac{2(N-3)}{N-1}+\theta}}\right),
\end{align*}
where $\theta>0$ is a fixed constant and  $A_1, A_2, A_3, \bar{A}_2, \bar{A}_3, B_4, B_5$ are positive constants and defined in Proposition \ref{iw}.
\end{proposition}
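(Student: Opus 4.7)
The plan is to treat $F(r,h,\Lambda)$ as a small perturbation of $I(W_1, W_2)$ via a Taylor expansion in $(\phi_1, \phi_2)$, control the resulting error terms using Lemmas \ref{nk}--\ref{rk} together with Proposition \ref{cmphi}, and then quote Proposition \ref{iw} for the explicit expansion of $I(W_1, W_2)$ itself.

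First I would write
\begin{align*}
F - I(W_1, W_2)
&= \langle I'(W_1,W_2), (\phi_1,\phi_2)\rangle + \tfrac{1}{2}\langle I''(W_1,W_2)(\phi_1,\phi_2),(\phi_1,\phi_2)\rangle \\
&\quad + \text{(cubic and higher in $\phi$)}.
\end{align*}
Because each of $(U_{\overline{x}_j,\Lambda}, V_{\overline{x}_j,\Lambda})$ and $(U_{\underline{x}_j,\Lambda}, V_{\underline{x}_j,\Lambda})$ solves (\ref{limiteq}), we have $-\Delta W_1 = \sum_j(V_{\overline{x}_j,\Lambda}^p + V_{\underline{x}_j,\Lambda}^p)$ and its symmetric counterpart, so integration by parts identifies the linear part as
\begin{align*}
\langle I'(W_1,W_2),(\phi_1,\phi_2)\rangle = -\int_{\mathbb{R}^N} R_{1,k}\,\phi_2 - \int_{\mathbb{R}^N} R_{2,k}\,\phi_1,
\end{align*}
which by the $\|\cdot\|_{*}$--$\|\cdot\|_{**}$ duality (and the elementary observation that the diagonal terms of the relevant weighted sums integrate to $O(k)$) is bounded by $Ck\|R_k\|_{**}\|(\phi_1,\phi_2)\|_{*}$.

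Next I would control the quadratic form $\int \nabla \phi_1 \cdot \nabla \phi_2 - \tfrac{p}{2}\int K_1 W_2^{p-1}\phi_2^2 - \tfrac{q}{2}\int K_2 W_1^{q-1}\phi_1^2$ either by direct weighted estimates analogous to those in the proof of Lemma \ref{lpL}, or more cleanly by testing (\ref{nlp1}) against $(\phi_2,\phi_1)$ so as to replace the gradient cross-product by lower-order quantities built from $R_k$ and $N_k(\phi_1,\phi_2)$; in either case this contributes $Ck\|(\phi_1,\phi_2)\|_{*}^2$. The cubic-and-higher Taylor remainders pair with $N_k(\phi_1,\phi_2)$, so by Lemma \ref{nk} they contribute the strictly smaller order $Ck\|(\phi_1,\phi_2)\|_{*}^{1+\min\{p,2\}}$.

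Inserting $\|R_k\|_{**},\ \|(\phi_1,\phi_2)\|_{*} \leq C\max\{(k/\mu)^{\alpha_1}, \mu^{-\beta}\}$ from Lemma \ref{rk} and Proposition \ref{cmphi}, and using $\mu = k^{(N-2)/(N-2-m)}$ (so $k/\mu = k^{-m/(N-2-m)}$), the whole correction is at most $Ck\max\{(k/\mu)^{2\alpha_1}, \mu^{-2\beta}\}$. The lower bound $\beta > m/2 + (N-3)(N-2-m)/((N-1)(N-2))$ built into Lemma \ref{rk} is calibrated precisely so that $\mu^{-2\beta}$ lands inside the target $k^{-m(N-2)/(N-2-m)-2(N-3)/(N-1)-\theta}$, and the window for $\alpha_1 = p(N-2) - (N+2)/2 - \tau$ permitted by (\ref{p+}) does the analogous job for $(k/\mu)^{2\alpha_1}$. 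The second line of the statement is then a direct substitution of Proposition \ref{iw}.

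The main obstacle will be the final piece of arithmetic: one has to unpack the $\max\{\cdot,\cdot\}$ and verify that the exponent $2(N-3)/(N-1)$ (rather than the single $(N-3)/(N-1)$ that will appear when one differentiates, cf.\ Remark \ref{remark1}) is actually realized, which is why the admissible ranges of $\beta$ and $\alpha_1$ must be invoked in their sharp form. Everything else is a routine second-order Taylor expansion combined with the pointwise estimates already in hand.
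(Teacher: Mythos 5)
Your proposal is correct and follows essentially the same route as the paper: a second-order Taylor expansion of $I$ linking $F$ to $I(W_1,W_2)$, use of the projected equation (\ref{nlp1}) to trade the cross term $\int\nabla\phi_1\nabla\phi_2$ for quantities controlled by $R_k$ and $N_k(\phi_1,\phi_2)$, the $O(k)$ bound for integrals of products of the $\|\cdot\|_{*}$ and $\|\cdot\|_{**}$ weights, and finally the insertion of Lemma \ref{rk}, Proposition \ref{cmphi} and the calibration of $\beta$ and $p$ to land inside $kO\bigl(k^{-\frac{m(N-2)}{N-2-m}-\frac{2(N-3)}{N-1}-\theta}\bigr)$, followed by Proposition \ref{iw}. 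The only cosmetic difference is that the paper Taylor-expands about the corrected pair $(W_1+\phi_1,W_2+\phi_2)$, where the first variation vanishes on $\mathbb{E}$, whereas you expand about $(W_1,W_2)$ and identify the linear term as $-\int R_{2,k}\phi_1-\int R_{1,k}\phi_2$; both yield the same $kO(\|R_k\|_{**}\|(\phi_1,\phi_2)\|_{*}+\|(\phi_1,\phi_2)\|_{*}^2)$ bound.
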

\begin{proof}
Since
\begin{align*}
\langle (I'_u(W_1+\phi_1,W_2+\phi_2),I'_v(W_1+\phi_1,W_2+\phi_2)), (\phi_1, \phi_2)\rangle=0, \ \forall (\phi_1, \phi_2)\in B,
\end{align*}
there are $t,s\in (0, 1)$ such that
\begin{align*}
F(r,h,\Lambda)=&I(W_1,W_2)-\frac{1}{2}\langle D^2I(W_1+t\phi_1,W_2+s\phi_2)(\phi_1, \phi_2),(\phi_1, \phi_2)\rangle
\nonumber\\
=&I(W_1,W_2)-\frac{1}{2}\int_{\mathbb{R}^N}\Bigg\{2\nabla \phi_1 \nabla \phi_2
-qK_2\left(\frac{|y|}{\mu}\right)(W_1+t\phi_1)^{q-1}\phi_1^2
\nonumber\\
&-pK_1\left(\frac{|y|}{\mu}\right)(W_2+s\phi_2)^{p-1}\phi_2^2
\Bigg\}
\nonumber\\
=&I(W_1,W_2)+\frac{1}{2}\int_{\mathbb{R}^N}\left[qK_2\left(\frac{|y|}{\mu}\right)(W_1+t\phi_1)^{q-1}\phi_1^2
+pK_1\left(\frac{|y|}{\mu}\right)(W_2+s\phi_2)^{p-1}\phi_2^2\right]
\nonumber\\
&-\frac{1}{2}\int_{\mathbb{R}^N}\left[N_{1,k}(\phi_2)+R_{1,k}\right]\phi_2+pK_1\left(\frac{|y|}{\mu}\right)W_2^{p-1}\phi_2^2
\nonumber\\
&-\frac{1}{2}\int_{\mathbb{R}^N}\left[N_{2,k}(\phi_1)+R_{2,k}\right]\phi_1+qK_2\left(\frac{|y|}{\mu}\right)W_1^{q-1}\phi_1^2
\nonumber\\
=&I(W_1,W_2)
+\frac{1}{2}\int_{\mathbb{R}^N}qK_2\left(\frac{|y|}{\mu}\right)\left[(W_1+t\phi_1)^{q-1}-W_1^{q-1}\right]\phi_1^2
-\left[N_{2,k}(\phi_1)+R_{2,k}\right]\phi_1
\nonumber\\
&+\frac{1}{2}\int_{\mathbb{R}^N}pK_1\left(\frac{|y|}{\mu}\right)\left[(W_2+s\phi_2)^{p-1}-W_2^{p-1}\right]\phi_2^2
-\left[N_{1,k}(\phi_2)+R_{1,k}\right]\phi_2.
\end{align*}
One has
\begin{align*}
&\int_{\mathbb{R}^N}\left[N_{2,k}(\phi_1)+R_{2,k}\right]\phi_1+\left[N_{1,k}(\phi_2)+R_{1,k}\right]\phi_2
\nonumber\\
\leq &C\left(\|N_k(\phi_1, \phi_2)\|_{**}+\|R_k\|_{**}\right)\|(\phi_1, \phi_2)\|_{*}\int_{\mathbb{R}^N}
\left(\sum_{j=1}^k\frac{1}{(1+|y-\overline{x}_j|)^{\frac{N-2}{2}+\tau}}
+\sum_{j=1}^k\frac{1}{(1+|y-\underline{x}_j|)^{\frac{N-2}{2}+\tau}}\right)
\nonumber\\
&\times \left(\sum_{j=1}^k\frac{1}{(1+|y-\overline{x}_j|)^{\frac{N+2}{2}+\tau}}
+\sum_{j=1}^k\frac{1}{(1+|y-\underline{x}_j|)^{\frac{N+2}{2}+\tau}}\right).
\end{align*}
In view of \cite{weiy}, we have
\begin{align*}
\sum_{j=1}^k\frac{1}{(1+|y-\overline{x}_j|)^{\frac{N-2}{2}+\tau}}
\sum_{i=1}^k\frac{1}{(1+|y-\overline{x}_i|)^{\frac{N+2}{2}+\tau}}
\leq C\sum_{j=1}^k\frac{1}{(1+|y-\overline{x}_j|)^{N+\tau}},
\end{align*}
and
\begin{align*}
\sum_{j=1}^k\frac{1}{(1+|y-\underline{x}_j|)^{\frac{N-2}{2}+\tau}}
\sum_{i=1}^k\frac{1}{(1+|y-\underline{x}_i|)^{\frac{N+2}{2}+\tau}}
\leq C\sum_{j=1}^k\frac{1}{(1+|y-\underline{x}_j|)^{N+\tau}}.
\end{align*}
Moreover, for $y\in \Omega_1^+$, there exists $\bar{\eta}$ satisfying $\bar{\eta}\leq 4\tau-\frac{2(N-2)}{N-2-m}$ such that
\begin{align*}
\frac{1}{(1+|y-\overline{x}_1|)^{\frac{N-2}{2}+\tau}}
\frac{1}{(1+|y-\underline{x}_1|)^{\frac{N+2}{2}+\tau}}
\leq &\frac{C}{(1+|y-\overline{x}_1|)^{N+\frac{1}{2}\bar{\eta}}}
\frac{1}{|\overline{x}_1-\underline{x}_1|^{2\tau-\frac{1}{2}\bar{\eta}}}
\nonumber\\
\leq &\frac{C}{(1+|y-\overline{x}_1|)^{N+\frac{1}{2}\bar{\eta}}},
\end{align*}
and similarly,
\begin{align*}
\frac{1}{(1+|y-\overline{x}_1|)^{\frac{N-2}{2}+\tau}}
\frac{1}{(1+|y-\underline{x}_j|)^{\frac{N+2}{2}+\tau}}
\leq \frac{C}{(1+|y-\overline{x}_1|)^{N+\frac{1}{2}\bar{\eta}}},
\end{align*}
\begin{align*}
\frac{1}{(1+|y-\overline{x}_j|)^{\frac{N-2}{2}+\tau}}
\frac{1}{(1+|y-\underline{x}_i|)^{\frac{N+2}{2}+\tau}}
\leq \frac{C}{(1+|y-\overline{x}_1|)^{N+\frac{1}{2}\bar{\eta}}}.
\end{align*}
Using the above inequalities, we get
\begin{align*}
&\int_{\mathbb{R}^N}\sum_{j=1}^k\frac{1}{(1+|y-\overline{x}_j|)^{\frac{N-2}{2}+\tau}}
\sum_{i=1}^k\frac{1}{(1+|y-\underline{x}_i|)^{\frac{N+2}{2}+\tau}}
\nonumber\\
=&\int_{\mathbb{R}^N}\sum_{j=1}^k\frac{1}{(1+|y-\underline{x}_j|)^{\frac{N-2}{2}+\tau}}
\sum_{i=1}^k\frac{1}{(1+|y-\overline{x}_i|)^{\frac{N+2}{2}+\tau}}
=O\left(k\right).
\end{align*}
Thus,
\begin{align}\label{fd1}
\int_{\mathbb{R}^N}\left[N_{2,k}(\phi_1)+R_{2,k}\right]\phi_1+\left[N_{1,k}(\phi_2)+R_{1,k}\right]\phi_2
=kO\left(\left(\|N_k(\phi_1, \phi_2)\|_{**}+\|R_k\|_{**}\right)\|(\phi_1, \phi_2)\|_{*}\right).
\end{align}
On the other hand,
\begin{align*}
&\int_{\mathbb{R}^N}K_2\left(\frac{|y|}{\mu}\right)(W_1+t\phi_1)^{q-1}\phi_1^2
\nonumber\\
\leq &C\int_{\mathbb{R}^N}\phi_1^{q+1}
\leq C\|\phi_1\|_{*}^{q+1}
\int_{\mathbb{R}^N}
\left(\sum_{j=1}^k\frac{1}{(1+|y-\overline{x}_j|)^{\frac{N-2}{2}+\tau}}
+\sum_{j=1}^k\frac{1}{(1+|y-\underline{x}_j|)^{\frac{N-2}{2}+\tau}}\right)^{q+1}
\nonumber\\
\leq &C\|\phi_1\|_{*}^{q+1}
\int_{\mathbb{R}^N}\left(\sum_{j=1}^k\frac{1}{(1+|y-\overline{x}_j|)^{\frac{N-2}{2}+\tau}}\right)^{q+1}
+\left(\sum_{j=1}^k\frac{1}{(1+|y-\underline{x}_j|)^{\frac{N-2}{2}+\tau}}\right)^{q+1},
\end{align*}
where from \cite[Lemma B.1]{duanmw}, for $y\in \Omega_1^+$ and $\check{\eta}\leq 2\tau-\frac{2(N-2)}{N-2-m}$,
\begin{align*}
\sum_{j=2}^k\frac{1}{(1+|y-\overline{x}_j|)^{\frac{N-2}{2}+\tau}}
\leq &\sum_{j=2}^k\frac{1}{(1+|y-\overline{x}_1|)^{\frac{N-2}{4}+\frac{1}{2}\tau}}
\frac{1}{(1+|y-\overline{x}_j|)^{\frac{N-2}{4}+\frac{1}{2}\tau}}
\nonumber\\
\leq &C\frac{1}{(1+|y-\overline{x}_1|)^{\frac{N-2}{2}+\frac{1}{2}\check{\eta}}}
\sum_{j=2}^k\frac{1}{|\overline{x}_1-\overline{x}_j|^{\tau-\frac{1}{2}\check{\eta}}}
\nonumber\\
\leq &C\frac{1}{(1+|y-\overline{x}_1|)^{\frac{N-2}{2}+\frac{1}{2}\check{\eta}}}.
\end{align*}
Then for $y\in \Omega_1^+$,
\begin{align*}
\left(\sum_{j=1}^k\frac{1}{(1+|y-\overline{x}_j|)^{\frac{N-2}{2}+\tau}}
+\sum_{j=1}^k\frac{1}{(1+|y-\underline{x}_j|)^{\frac{N-2}{2}+\tau}}\right)^{q+1}
\leq &\frac{C}{(1+|y-\overline{x}_1|)^{(\frac{N-2}{2}+\frac{1}{2}\check{\eta})(q+1)}}
\nonumber\\
\leq &\frac{C}{(1+|y-\overline{x}_1|)^{N+\frac{1}{2}\check{\eta}(q+1)}}.
\end{align*}
Moreover,
\begin{align*}
\int_{\mathbb{R}^N}\left(\sum_{j=1}^k\frac{1}{(1+|y-\overline{x}_j|)^{\frac{N-2}{2}+\tau}}
\right)^{q+1}
\leq k\int_{\Omega_1^+}\frac{C}{(1+|y-\overline{x}_1|)^{N+\frac{1}{2}\check{\eta}(q+1)}}\leq Ck.
\end{align*}
In an analogy way,
\begin{align*}
\int_{\mathbb{R}^N}\left(\sum_{j=1}^k\frac{1}{(1+|y-\underline{x}_j|)^{\frac{N-2}{2}+\tau}}
\right)^{q+1}\leq Ck.
\end{align*}
Then we have
\begin{align}\label{fd2}
\int_{\mathbb{R}^N}K_2\left(\frac{|y|}{\mu}\right)(W_1+t\phi_1)^{q-1}\phi_1^2
=kO\left(\|\phi_1\|_{*}^{q+1}\right).
\end{align}
Similar results can be obtained as follows
\begin{align}\label{fd3}
\int_{\mathbb{R}^N}K_1\left(\frac{|y|}{\mu}\right)(W_2+s\phi_2)^{p-1}\phi_2^2
=kO\left(\|\phi_2\|_{*}^{p+1}\right).
\end{align}
From (\ref{fd1})-(\ref{fd3}), we get
\begin{align*}
F(r,h,\Lambda)=&I(W_1,W_2)
+k\left\{O\left(\left(\|N_k(\phi_1, \phi_2)\|_{**}+\|R_k\|_{**}\right)\|(\phi_1, \phi_2)\|_{*}+\|\phi_1\|_{*}^{q+1}
+\|\phi_2\|_{*}^{p+1}\right)
\right\}
\nonumber\\
=&I(W_1,W_2)+kO\left(\|(\phi_1, \phi_2)\|_{*}^2\right).
\end{align*}
Because of
\begin{align*}
&\max\left\{\frac{m}{2}+\frac{(N-3)(N-2-m)}{(N-1)(N-2)},\frac{m+1}{2}+\frac{(N-3)(N-2-m)}{2(N-1)(N-2)}\right\}
<\beta
\nonumber\\
&<\min\left\{m, p(N-2)-\frac{N+2}{2}-\tau\right\}
\end{align*}
and
\begin{align*}
p>\frac{N}{N-2}+\frac{\tau}{N-2}+\frac{(N-3)(N-2-m)}{m(N-1)(N-2)},
\end{align*}
we see that
\begin{align*}
k\|(\phi_1, \phi_2)\|_{*}^2\leq Ck\frac{1}{k^{\frac{m(N-2)}{N-2-m}+\frac{2(N-3)}{N-1}+\theta}}.
\end{align*}
Then combing with Proposition \ref{iw}, we conclude the proof.
\end{proof}

\begin{proposition}\label{flamda}
We have
\begin{align*}
\frac{\partial F(r,h,\Lambda)}{\partial \Lambda}
=&\frac{k(N-2)}{\Lambda^{N-1}}\left[\frac{B_4k^{N-2}}{(r\sqrt{1-h^2})^{N-2}}
+\frac{B_5k}{r^{N-2}h^{N-3}\sqrt{1-h^2}}\right]
\nonumber\\
&-k\left[\frac{A_2m_2}{\mu^{m_2}\Lambda^{m_2+1}}+\frac{\bar{A}_2m_1}{\mu^{m_1}\Lambda^{m_1+1}}\right]
-k\left[\frac{A_3(m_2-2)}{\mu^{m_2}\Lambda^{m_2-1}}+\frac{\bar{A}_3(m_1-2)}{\mu^{m_1}\Lambda^{m_1-1}}\right](\mu-r)^2
\nonumber\\
&+kO\left(\frac{1}{k^{\frac{m(N-2)}{N-2-m}+\theta}}\right),
\end{align*}
where $B_4$, $B_5$, $A_2$, $\bar{A}_2$, $A_3$, $\bar{A}_3$, $m$, $\theta$ are positive constants defined in Proposition \ref{iw}.
\end{proposition}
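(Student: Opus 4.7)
The strategy is to obtain the formula for $\partial_\Lambda F$ by differentiating the asymptotic expansion of $F(r,h,\Lambda)$ that was just established in Proposition \ref{energyex}. Decomposing
\[
F(r,h,\Lambda) \;=\; I(W_1,W_2) \;+\; E(r,h,\Lambda),
\qquad E \;=\; kO\!\left(\|(\phi_1,\phi_2)\|_{*}^{2}\right),
\]
I split
\[
\partial_\Lambda F \;=\; \partial_\Lambda I(W_1,W_2) \;+\; \partial_\Lambda E,
\]
and treat the two pieces separately.

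For the main term $\partial_\Lambda I(W_1,W_2)$, I would differentiate the explicit expansion of $I(W_1,W_2)$ given in Proposition \ref{iw} term by term. Since $\mu=k^{(N-2)/(N-2-m)}$ depends only on $k$, and $r,h,\Lambda$ are independent parameters in $\wp_k$, differentiation only acts on the explicit powers of $\Lambda$. The $\Lambda^{-(N-2)}$ interaction term produces the first bracket $(N-2)\Lambda^{-(N-1)}[B_4 k^{N-2}/(r\sqrt{1-h^2})^{N-2}+B_5 k/(r^{N-2}h^{N-3}\sqrt{1-h^2})]$; the $\Lambda^{-m_i}$ error-potential contributions give the second bracket with the extra factor $-m_i/\Lambda$; and the $(\mu-r)^2\Lambda^{-(m_i-2)}$ Taylor correction gives the third bracket with the factor $-(m_i-2)/\Lambda$. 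The remaining subleading pieces $k\,C(r,\Lambda)\mu^{-m}(\mu-r)^{\theta+2}$ and $k\,C(r,\Lambda)\mu^{-(m+\theta)}$ differentiate into terms that are absorbed into the claimed error $kO(k^{-m(N-2)/(N-2-m)-\theta})$, using $(N-2)/(N-2-m)>1$.

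For the error term $\partial_\Lambda E$, I would differentiate the quadratic-in-$\phi$ representation
\[
E \;=\; \tfrac12\!\int\!\Bigl\{q K_2\bigl[(W_1+t\phi_1)^{q-1}-W_1^{q-1}\bigr]\phi_1^{2}
- (N_{2,k}(\phi_1)+R_{2,k})\phi_1\Bigr\} \;+\; (\text{analogue for }\phi_2)
\]
from the proof of Proposition \ref{energyex}, and invoke the chain rule. Three ingredients are needed: (i) pointwise decay of $\partial_\Lambda W_i$, which has the same structure as $W_i$ and can be inserted into the weighted inequalities used in Lemma \ref{nk} and Lemma \ref{rk}; (ii) a uniform $\|\cdot\|_{*}$ bound on $\partial_\Lambda\phi_i$, obtained by differentiating (\ref{nlp}) in $\Lambda$ to get a linear system of the form (\ref{linearp}) with a right-hand side that is the $\partial_\Lambda$ of $R_k+N_k(\phi_1,\phi_2)$ plus correction terms from $\partial_\Lambda W_i^{p-1}$ and $\partial_\Lambda W_i^{q-1}$, all controlled in $\|\cdot\|_{**}$, so that Proposition \ref{lpLc} yields $\|\partial_\Lambda(\phi_1,\phi_2)\|_{*}\le C\|(\phi_1,\phi_2)\|_{*}$; and (iii) a parallel bound on $\partial_\Lambda c_l$ by multiplying the differentiated equation against $(\overline{Y}_{l1},\overline{Z}_{l1})$ as in the proof of Lemma \ref{lpL}.

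The principal obstacle is step (ii): the orthogonality conditions in $\mathbb{E}$ involve $\Lambda$ through the projection directions $V^{p-1}\overline{Y}_{lj}, U^{q-1}\overline{Z}_{lj}$, so differentiating these conditions produces extra terms that must be reabsorbed via another Lyapunov--Schmidt correction before applying Proposition \ref{lpLc}. Once this is handled, the product $\|\phi\|_{*}\cdot\|\partial_\Lambda\phi\|_{*}$ is of the same order as $\|(\phi_1,\phi_2)\|_{*}^{2}$, which by Proposition \ref{cmppi} and the range of $\beta$ in Lemma \ref{rk} (combined with the hypotheses (\ref{p+}) and (\ref{m+})) is $O(k^{-m(N-2)/(N-2-m)-\theta})$ per bump, and the claimed error $kO(\cdot)$ follows. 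Assembling the main part with the error yields the stated expansion.
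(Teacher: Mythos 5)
Your plan hinges on differentiating the expansion of Proposition \ref{energyex} with respect to $\Lambda$, and you correctly identify that this forces you to control $\partial_\Lambda\phi_i$ and $\partial_\Lambda c_l$: the error term $E=kO\left(\|(\phi_1,\phi_2)\|_{*}^{2}\right)$ depends on $\Lambda$ through $\phi$, $W$ and the multipliers, and an asymptotic identity cannot be differentiated term by term without a bound on the derivative of the remainder. But you do not supply that bound; you name it the ``principal obstacle'' and then proceed as if it were settled. Nothing in the paper gives it: Proposition \ref{lpLc} applies to the linear problem (\ref{linearp}) itself, not to its $\Lambda$-derivative, whose right-hand side contains $\partial_\Lambda$ of the coefficients $W_2^{p-1},W_1^{q-1}$, of the projection directions $V^{p-1}\overline{Y}_{lj},U^{q-1}\overline{Z}_{lj}$, and of the orthogonality constraints defining $\mathbb{E}$ (which move with $\Lambda$). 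Establishing $\|\partial_\Lambda(\phi_1,\phi_2)\|_{*}\leq C\|(\phi_1,\phi_2)\|_{*}$ would require an implicit-function-theorem type argument with these $\Lambda$-dependent projections, which is genuinely additional work; as written, your argument is incomplete precisely at the step that carries the analytic weight. (Incidentally, the proposition you invoke as ``cmppi'' is presumably Proposition \ref{cmphi}.)

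The paper's proof avoids this issue entirely, and that is the key difference. It computes
$\partial_\Lambda F=\left\langle I'_1(W_1+\phi_1,W_2+\phi_2),\partial_\Lambda W_1+\partial_\Lambda\phi_1\right\rangle+\left\langle I'_2(W_1+\phi_1,W_2+\phi_2),\partial_\Lambda W_2+\partial_\Lambda\phi_2\right\rangle$
and uses that, by (\ref{nlp}), $I'(W+\phi)$ equals exactly the linear combination $\sum_{l,j}c_l(\cdots)$ of the projection functions; the pairings with $\partial_\Lambda\phi_i$ are then rewritten, by differentiating the constraints $\int_{\mathbb{R}^N}U_{\overline{x}_j,\Lambda}^{q-1}\overline{Z}_{lj}\phi_1=0$ (and their analogues) in $\Lambda$, as $-\sum_{l,j}c_l\left\langle\partial_\Lambda\left(U_{\overline{x}_j,\Lambda}^{q-1}\overline{Z}_{lj}\right),\phi_1\right\rangle$, so only $|c_l|$ and $\|\phi_1\|_{*}$ appear, both already estimated in Proposition \ref{cmphi}; no bound on $\partial_\Lambda\phi$ is ever needed. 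The remaining term $\left\langle I'_1,\partial_\Lambda W_1\right\rangle$ is expanded directly, the linear-in-$\phi_1$ contribution is handled using the orthogonality in $\mathbb{E}$ together with the decay of $K_i-1$ near the concentration sphere, and the main part is identified with $\partial_\Lambda I(W_1,W_2)$, whose expansion is computed independently in Proposition \ref{iwlamda} (again with the derivative landing on explicit bubble interactions, not obtained by formally differentiating Proposition \ref{iw}). To repair your proposal you must either actually prove the $\partial_\Lambda\phi$ and $\partial_\Lambda c_l$ estimates, or adopt this device of transferring the derivative off $\phi$ through the orthogonality conditions.
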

\begin{proof}
It holds
\begin{align*}
\frac{\partial F(r,h,\Lambda)}{\partial \Lambda}
=&\left\langle I'_{1}(W_1+\phi_1,W_2+\phi_2), \frac{\partial W_1}{\partial \Lambda}+\frac{\partial \phi_1}{\partial \Lambda}\right\rangle
+\left\langle I'_{2}(W_1+\phi_1,W_2+\phi_2), \frac{\partial W_2}{\partial \Lambda}+\frac{\partial \phi_2}{\partial \Lambda}\right\rangle
\nonumber\\
=&\left\langle I'_{1}(W_1+\phi_1,W_2+\phi_2), \frac{\partial W_1}{\partial \Lambda}\right\rangle
+\sum_{l=1}^3\sum_{j=1}^k\left\langle U_{\overline{x}_j, \Lambda}^{q-1}\overline{Z}_{lj}+U_{\underline{x}_j, \Lambda}^{q-1}\underline{Z}_{lj},\frac{\partial \phi_1}{\partial \Lambda}\right\rangle
\nonumber\\
&+\left\langle I'_{1}(W_1+\phi_1,W_2+\phi_2), \frac{\partial W_2}{\partial \Lambda}\right\rangle
+\sum_{l=1}^3\sum_{j=1}^k\left\langle V_{\overline{x}_j, \Lambda}^{p-1}\overline{Y}_{lj}+V_{\underline{x}_j, \Lambda}^{p-1}\underline{Y}_{lj},\frac{\partial \phi_2}{\partial \Lambda}\right\rangle.
\end{align*}
Since
\begin{align*}
\left\langle U_{\overline{x}_j, \Lambda}^{q-1}\overline{Z}_{lj}, \frac{\partial \phi_1}{\partial \Lambda}\right\rangle
=-\left\langle \frac{\partial U_{\overline{x}_j, \Lambda}^{q-1}\overline{Z}_{lj}}{\partial \Lambda}, \phi_1\right\rangle,
\end{align*}
we obtain
\begin{align*}
\left|\sum_{j=1}^kc_l\left\langle U_{\overline{x}_j, \Lambda}^{q-1}\overline{Z}_{lj}, \frac{\partial \phi_1}{\partial \Lambda}\right\rangle
\right|
\leq &C|c_l|\|\phi_1\|_{*}\int_{\mathbb{R}^N}\sum_{j=1}^k\frac{1}{(1+|y-\overline{x}_j|)^{q(N-2)}}
\nonumber\\
&\sum_{j=1}^k\Bigg\{\frac{1}{(1+|y-\overline{x}_j|)^{\frac{N-2}{2}+\tau}}
+\frac{1}{(1+|y-\underline{x}_j|)^{\frac{N-2}{2}+\tau}}
\Bigg\}.
\end{align*}
For $y\in \Omega_1^+$, by \cite[Lemma B.1]{duanmw}, there exists $\hat{\eta}$ satisfying $\frac{2(N-2)}{N-2-m}\leq \hat{\eta}<q(N-2)$ such that
\begin{align*}
\sum_{j=2}^k\frac{1}{(1+|y-\overline{x}_j|)^{q(N-2)}}
\leq &\sum_{j=2}^k\frac{1}{(1+|y-\overline{x}_1|)^{\frac{N-2}{2}q}}\frac{1}{(1+|y-\overline{x}_j|)^{\frac{N-2}{2}q}}
\nonumber\\
\leq &C\frac{1}{(1+|y-\overline{x}_1|)^{q(N-2)-\frac{1}{2}\hat{\eta}}}
\sum_{j=2}^k\frac{1}{|\overline{x}_1-\overline{x}_j|^{\frac{1}{2}\hat{\eta}}}
\leq \frac{C}{(1+|y-\overline{x}_1|)^{\frac{N+2}{2}}}.
\end{align*}
Similarly,
\begin{align*}
\sum_{j=2}^k\frac{1}{(1+|y-\overline{x}_j|)^{\frac{N-2}{2}+\tau}}
\leq \frac{C}{(1+|y-\overline{x}_1|)^{\frac{N-2}{2}+\frac{1}{2}\bar{\eta}_1}}, \ \text{ for } y\in \Omega_1^+,
\end{align*}
and
\begin{align*}
\sum_{j=1}^k\frac{1}{(1+|y-\underline{x}_j|)^{\frac{N-2}{2}+\tau}}
\leq \frac{C}{(1+|y-\underline{x}_1|)^{\frac{N-2}{2}+\frac{1}{2}\bar{\eta}_2}}, \ \text{ for } y\in \Omega_1^+.
\end{align*}
It follows from Proposition \ref{lpLc} that
\begin{align*}
\left|\sum_{j=1}^kc_l\left\langle U_{\overline{x}_j, \Lambda}^{q-1}\overline{Z}_{lj}, \frac{\partial \phi_1}{\partial \Lambda}\right\rangle
\right|
\leq Ck|c_l|\|\phi_1\|_{*}
\leq \frac{Ck}{1+\delta_{l2}r}\|\phi_1\|_{*}^2.
\end{align*}
We have
\begin{align*}
\left\langle I'_{1}(W_1+\phi_1,W_2+\phi_2), \frac{\partial W_1}{\partial \Lambda}\right\rangle
=&\int_{\mathbb{R}^N}D(W_1+\phi_1)D\frac{\partial W_1}{\partial \Lambda}
-\int_{\mathbb{R}^N}K_2\left(\frac{|y|}{\mu}\right)(W_1+\phi_1)^{q}\frac{\partial W_1}{\partial \Lambda}
\nonumber\\
=&\int_{\mathbb{R}^N}DW_1D\frac{\partial W_1}{\partial \Lambda}
-\int_{\mathbb{R}^N}K_2\left(\frac{|y|}{\mu}\right)(W_1+\phi_1)^{q}\frac{\partial W_1}{\partial \Lambda}
\nonumber\\
=&\int_{\mathbb{R}^N}DW_1D\frac{\partial W_1}{\partial \Lambda}
-\Bigg\{\int_{\mathbb{R}^N}K_2\left(\frac{|y|}{\mu}\right)W_1^{q}\frac{\partial W_1}{\partial \Lambda}
\nonumber\\
&+q\int_{\mathbb{R}^N}K_2\left(\frac{|y|}{\mu}\right)W_1^{q-1}\frac{\partial W_1}{\partial \Lambda}\phi_1
+O\left(\int_{\mathbb{R}^N}\phi_1^{q+1}\right)
\Bigg\}.
\end{align*}
Since $(\phi_1,\phi_2)\in \mathbb{E}$, we have
\begin{align*}
\int_{\mathbb{R}^N}K_2\left(\frac{|y|}{\mu}\right)W_1^{q-1}\frac{\partial W_1}{\partial \Lambda}\phi_1
=&\int_{\mathbb{R}^N}K_2\left(\frac{|y|}{\mu}\right)\left[W_1^{q-1}\frac{\partial W_1}{\partial \Lambda}
-\sum_{j=1}^k\left(U_{\overline{x}_j, \Lambda}^{q-1}\overline{Z}_{3j}
+U_{\underline{x}_j, \Lambda}^{q-1}\underline{Z}_{3j}
\right)
\right]\phi_1
\nonumber\\
&+\sum_{j=1}^k\int_{\mathbb{R}^N}\left(K_2\left(\frac{|y|}{\mu}\right)-1\right)
\left(U_{\overline{x}_j, \Lambda}^{q-1}\overline{Z}_{3j}
+U_{\underline{x}_j, \Lambda}^{q-1}\underline{Z}_{3j}
\right)\phi_1
\nonumber\\
=&2k\int_{\Omega_1^+}K_2\left(\frac{|y|}{\mu}\right)\left[W_1^{q-1}\frac{\partial W_1}{\partial \Lambda}
-\sum_{j=1}^k\left(U_{\overline{x}_j, \Lambda}^{q-1}\overline{Z}_{3j}
+U_{\underline{x}_j, \Lambda}^{q-1}\underline{Z}_{3j}
\right)
\right]\phi_1
\nonumber\\
&+k\int_{\mathbb{R}^N}\left(K_2\left(\frac{|y|}{\mu}\right)-1\right)
\left(U_{\overline{x}_1, \Lambda}^{q-1}\overline{Z}_{31}
+U_{\underline{x}_1, \Lambda}^{q-1}\underline{Z}_{31}
\right)\phi_1,
\end{align*}
where
\begin{align*}
&\left|\int_{\Omega_1^+}K_2\left(\frac{|y|}{\mu}\right)\left[W_1^{q-1}\frac{\partial W_1}{\partial \Lambda}
-\sum_{j=1}^k\left(U_{\overline{x}_j, \Lambda}^{q-1}\overline{Z}_{3j}
+U_{\underline{x}_j, \Lambda}^{q-1}\underline{Z}_{3j}
\right)
\right]\phi_1
\right|
\nonumber\\
\leq &C\int_{\Omega_1^+}\left(U_{\overline{x}_1, \Lambda}^{q-1}\left(\sum_{j=2}^kU_{\overline{x}_j, \Lambda}+\sum_{j=1}^kU_{\underline{x}_j, \Lambda}\right)
+\left(\sum_{j=2}^kU_{\overline{x}_j, \Lambda}^q+\sum_{j=1}^kU_{\underline{x}_j, \Lambda}^q\right)
\right)|\phi_1|
\nonumber\\
\leq &C\|\phi_1\|_{*}\int_{\Omega_1^+}\left(U_{\overline{x}_1, \Lambda}^{q-1}\left(\sum_{j=2}^kU_{\overline{x}_j, \Lambda}+\sum_{j=1}^kU_{\underline{x}_j, \Lambda}\right)
+\left(\sum_{j=2}^kU_{\overline{x}_j, \Lambda}^q+\sum_{j=1}^kU_{\underline{x}_j, \Lambda}^q\right)
\right)
\nonumber\\
&\sum_{j=1}^k\Bigg\{\frac{1}{(1+|y-\overline{x}_j|)^{\frac{N-2}{2}+\tau}}
+\frac{1}{(1+|y-\underline{x}_j|)^{\frac{N-2}{2}+\tau}}
\Bigg\}
\nonumber\\
\leq &C\left(\frac{k}{\mu}\right)^{\alpha}\|\phi_1\|_{*}.
\end{align*}
In fact, since
\begin{align*}
&\int_{\Omega_1^+}U_{\overline{x}_1, \Lambda}^{q-1}\left(\sum_{j=2}^kU_{\overline{x}_j, \Lambda}+\sum_{j=1}^kU_{\underline{x}_j, \Lambda}\right)
\sum_{j=1}^k\Bigg\{\frac{1}{(1+|y-\overline{x}_j|)^{\frac{N-2}{2}+\tau}}
+\frac{1}{(1+|y-\underline{x}_j|)^{\frac{N-2}{2}+\tau}}
\Bigg\}
\nonumber\\
=&\int_{\Omega_1^+}U_{\overline{x}_1, \Lambda}^{q-1}\left(\sum_{j=2}^kU_{\overline{x}_j, \Lambda}+\sum_{j=1}^kU_{\underline{x}_j, \Lambda}\right)
\frac{1}{(1+|y-\overline{x}_1|)^{\frac{N-2}{2}+\tau}}
\nonumber\\
&+\int_{\Omega_1^+}U_{\overline{x}_1, \Lambda}^{q-1}\left(\sum_{j=2}^kU_{\overline{x}_j, \Lambda}+\sum_{j=1}^kU_{\underline{x}_j, \Lambda}\right)
\Bigg\{\sum_{j=2}^k\frac{1}{(1+|y-\overline{x}_j|)^{\frac{N-2}{2}+\tau}}
+\sum_{j=1}^k\frac{1}{(1+|y-\underline{x}_j|)^{\frac{N-2}{2}+\tau}}
\Bigg\}
\nonumber\\
\leq &\int_{\Omega_1^+}\frac{C}{(1+|y-\overline{x}_1|)^{q(N-2)-\alpha+\frac{N-2}{2}+\tau}}\left(\frac{k}{\mu}\right)^{\alpha}
\nonumber\\
&+\int_{\Omega_1^+}\frac{C}{(1+|y-\overline{x}_1|)^{(q-1)(N-2)-\alpha}}\left(\frac{k}{\mu}\right)^{\alpha}
\Bigg\{\sum_{j=2}^k\frac{1}{(1+|y-\overline{x}_j|)^{\frac{N-2}{2}+\tau}}
+\sum_{j=1}^k\frac{1}{(1+|y-\underline{x}_j|)^{\frac{N-2}{2}+\tau}}
\Bigg\}
\nonumber\\
\leq &\int_{\Omega_1^+}\frac{C}{(1+|y-\overline{x}_1|)^{q(N-2)-\alpha+\frac{N-2}{2}+\tau}}\left(\frac{k}{\mu}\right)^{\alpha}
+\int_{\Omega_1^+}\frac{C}{(1+|y-\overline{x}_1|)^{q(N-2)+\frac{N-2}{2}+\tau-\alpha-\beta}}\left(\frac{k}{\mu}\right)^{\alpha+\tilde{\alpha}}
\nonumber\\
=&O\left(\left(\frac{k}{\mu}\right)^{\alpha}\right),
\end{align*}
where $\alpha\in (1, N-2)$ and $\tilde{\alpha}\in (1,\frac{N-2}{2}+\tau)$.

In an analogy way, we get
\begin{align*}
\int_{\Omega_1^+}\left(\sum_{j=2}^kU_{\overline{x}_j, \Lambda}^q+\sum_{j=1}^kU_{\underline{x}_j, \Lambda}^q\right)
\sum_{j=1}^k\Bigg\{\frac{1}{(1+|y-\overline{x}_j|)^{\frac{N-2}{2}+\tau}}
+\frac{1}{(1+|y-\underline{x}_j|)^{\frac{N-2}{2}+\tau}}
\Bigg\}
=O\left(\left(\frac{k}{\mu}\right)^{\alpha}\right).
\end{align*}
Moreover, similar to the proof in Proposition \ref{lpL}, we have
\begin{align*}
&\left|\int_{\mathbb{R}^N}\left(K_2\left(\frac{|y|}{\mu}\right)-1\right)
\left(U_{\overline{x}_1, \Lambda}^{q-1}\overline{Z}_{31}
+U_{\underline{x}_1, \Lambda}^{q-1}\underline{Z}_{31}
\right)\phi_1\right|
\nonumber\\
\leq &\left|\int_{||y|-\mu|\leq \sqrt{\mu}}\left(K_2\left(\frac{|y|}{\mu}\right)-1\right)
\left(U_{\overline{x}_1, \Lambda}^{q-1}\overline{Z}_{31}
+U_{\underline{x}_1, \Lambda}^{q-1}\underline{Z}_{31}
\right)\phi_1\right|
\nonumber\\
&+\left|\int_{||y|-\mu|\geq \sqrt{\mu}}\left(K_2\left(\frac{|y|}{\mu}\right)-1\right)
\left(U_{\overline{x}_1, \Lambda}^{q-1}\overline{Z}_{31}
+U_{\underline{x}_1, \Lambda}^{q-1}\underline{Z}_{31}
\right)\phi_1\right|
\leq \frac{C}{\mu^{\frac{m}{2}}}\|\phi_1\|_{*}.
\end{align*}
Thus, we have proved that let $\alpha=\frac{N-2}{2}$,
\begin{align*}
\frac{\partial F(r,h,\Lambda)}{\partial \Lambda}
=&\frac{\partial I(W_1,W_2)}{\partial \Lambda}
+kO\left(\frac{1}{\mu^{\sigma}}\|\phi_1\|_{*}\right)
+kO\left(\left(\frac{k}{\mu}\right)^{\alpha}\|\phi_1\|_{*}\right)
+kO\left(\frac{1}{1+\delta_{l2}r}\|\phi_1\|_{*}^2\right)
\nonumber\\
=&\frac{\partial I(W_1,W_2)}{\partial \Lambda}
+kO\left(\frac{1}{k^{\frac{m(N-2)}{N-2-m}+\theta}}\right).
\end{align*}
\end{proof}

\begin{proposition}\label{fh}
We have
\begin{align*}
\frac{\partial F(r,h,\Lambda)}{\partial h}
=&-\frac{k}{\Lambda^{N-2}}\left[\frac{(N-2)B_4k^{N-2}h}{r^{N-2}(\sqrt{1-h^2})^{N}}
-\frac{(N-3)B_5k}{r^{N-2}h^{N-2}\sqrt{1-h^2}}\right]
\nonumber\\
&+kO\left(\frac{1}{k^{\frac{m(N-2)}{N-2-m}+\frac{N-3}{N-1}+\theta}}\right),
\end{align*}
where $B_4$, $B_5$, $m$, $\theta$ are positive constants defined in Proposition \ref{iw}.
\end{proposition}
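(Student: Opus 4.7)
The plan is to mimic the proof of Proposition \ref{flamda}, replacing the variable $\Lambda$ by $h$ throughout. By the chain rule,
\begin{align*}
\frac{\partial F(r,h,\Lambda)}{\partial h}
=\left\langle I'_{1}(W_1+\phi_1,W_2+\phi_2), \frac{\partial W_1}{\partial h}+\frac{\partial \phi_1}{\partial h}\right\rangle
+\left\langle I'_{2}(W_1+\phi_1,W_2+\phi_2), \frac{\partial W_2}{\partial h}+\frac{\partial \phi_2}{\partial h}\right\rangle.
\end{align*}
Since $(W_1+\phi_1,W_2+\phi_2)$ solves the nonlinear problem \eqref{nlp}, the gradients $I'_1$ and $I'_2$ reduce to the Lagrange multiplier combinations $\sum_{l,j}c_l(U^{q-1}_{\overline{x}_j,\Lambda}\overline{Z}_{lj}+U^{q-1}_{\underline{x}_j,\Lambda}\underline{Z}_{lj})$ and $\sum_{l,j}c_l(V^{p-1}_{\overline{x}_j,\Lambda}\overline{Y}_{lj}+V^{p-1}_{\underline{x}_j,\Lambda}\underline{Y}_{lj})$. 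Hence the $\partial_h\phi_i$ pieces couple only to these constraint functions.

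\textbf{Handling the $\partial_h\phi_i$-contributions.} Differentiating the orthogonality relations that define $\mathbb{E}$ with respect to $h$ produces the identity
\begin{align*}
\int_{\mathbb{R}^N}V^{p-1}_{\overline{x}_j,\Lambda}\overline{Y}_{lj}\frac{\partial\phi_1}{\partial h}
=-\int_{\mathbb{R}^N}\frac{\partial}{\partial h}\left(V^{p-1}_{\overline{x}_j,\Lambda}\overline{Y}_{lj}\right)\phi_1,
\end{align*}
and analogously for the $\underline{x}_j$, $U$, $Z$ factors. Using the decay bounds on $\overline{Y}_{lj},\overline{Z}_{lj}$ recalled in the proof of Lemma \ref{lpL}, together with $|c_l|\leq C(1+\delta_{l2}r)^{-1}\max\{(k/\mu)^{\alpha_1},\mu^{-\beta}\}$ from Propositions \ref{lpLc} and \ref{cmphi}, all $c_l$-contributions are absorbed into the error $kO(k^{-\frac{m(N-2)}{N-2-m}-\frac{N-3}{N-1}-\theta})$.

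\textbf{Main term.} Replacing $W_i+\phi_i$ by $W_i$ inside $\langle I'_1,\partial_h W_1\rangle+\langle I'_2,\partial_h W_2\rangle$ produces the same three remainders treated in Proposition \ref{flamda}: an integral of $(K_2(|y|/\mu)-1)W_1^{q-1}\partial_h W_1\phi_1$ (plus its $K_1,W_2,\phi_2$ analogue), the bubble cross-interaction $\int K_2\big[W_1^{q-1}\partial_h W_1-\sum_j(U^{q-1}_{\overline{x}_j,\Lambda}\overline{Z}_{2j}+U^{q-1}_{\underline{x}_j,\Lambda}\underline{Z}_{2j})\big]\phi_1$, and a tail of order $\int |\phi_1|^{q+1}+|\phi_2|^{p+1}$. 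Each is estimated region by region exactly as in the proof of Proposition \ref{flamda}, and what remains is the leading piece $\partial I(W_1,W_2)/\partial h$. The two explicit quantities in the statement then arise by differentiating the expansion of $I(W_1,W_2)$ in Proposition \ref{iw}, using
\begin{align*}
\frac{\partial}{\partial h}\Big[(r\sqrt{1-h^2})^{-(N-2)}\Big]=\frac{(N-2)h}{r^{N-2}(\sqrt{1-h^2})^{N}},
\qquad
\frac{\partial}{\partial h}\Big[\big(h^{N-3}\sqrt{1-h^2}\big)^{-1}\Big]=-\frac{N-3}{h^{N-2}\sqrt{1-h^2}}+\text{l.o.t.}
\end{align*}

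\textbf{Main obstacle.} The new feature compared with Proposition \ref{flamda} is the extra factor $r$ carried by every $\partial_h$-derivative of a bubble, reflected in $|\overline{Y}_{2j}|,|\overline{Z}_{2j}|\leq Cr(1+|y-\overline{x}_j|)^{-(N-1)}$. This inflates each error term by $r\sim k^{(N-2)/(N-2-m)}$, and must be compensated twice: the constraint estimate supplies $|c_2|$ with a factor $(1+r)^{-1}$, while the separation $|\overline{x}_i-\underline{x}_j|\gtrsim rh\sim k^{(N-2)/(N-2-m)-(N-3)/(N-1)}$ is what finally produces the additional improvement $k^{-(N-3)/(N-1)}$ visible in the stated error exponent. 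Absorbing the residual $kO(r\|\phi_1\|_*^2)$ in particular is precisely what forces the reinforced lower bound on $p$ in \eqref{p+} (see Remark \ref{remark1}); once this bound is in force, every remainder fits under $kO(k^{-\frac{m(N-2)}{N-2-m}-\frac{N-3}{N-1}-\theta})$, which completes the plan.
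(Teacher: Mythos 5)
Your strategy coincides with the paper's proof: the chain rule decomposition, the use of the orthogonality relations in $\mathbb{E}$ to convert $\langle U^{q-1}_{\overline{x}_j,\Lambda}\overline{Z}_{lj},\partial_h\phi_1\rangle$ into $-\langle\partial_h(U^{q-1}_{\overline{x}_j,\Lambda}\overline{Z}_{lj}),\phi_1\rangle$ (yielding the bound $Cr|c_l|\|\phi_1\|_*$), the reduction of the main term to $\partial_h I(W_1,W_2)$ plus a residual $kO\bigl(r\|\phi_1\|_*^2\bigr)$, and the observation that the reinforced lower bound on $p$ in \eqref{p+} is precisely what absorbs this residual into $kO\bigl(k^{-\frac{m(N-2)}{N-2-m}-\frac{N-3}{N-1}-\theta}\bigr)$ are all exactly the paper's steps.

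The one point you should repair is how you produce the explicit leading terms. You propose to "differentiate the expansion of $I(W_1,W_2)$ in Proposition \ref{iw}" with respect to $h$. Term-by-term differentiation of an asymptotic expansion is not justified: the remainder $kO\bigl(k^{-\frac{m(N-2)}{N-2-m}-\frac{2(N-3)}{N-1}-\theta}\bigr)$ in Proposition \ref{iw} is only a pointwise bound on $\wp_k$, and nothing guarantees that its $h$-derivative is small (indeed, since $h\sim k^{-\frac{N-3}{N-1}}$, an $h$-derivative can lose a factor $k^{\frac{N-3}{N-1}}$ or worse). The paper avoids this by computing $\partial_h I(W_1,W_2)$ directly in Proposition \ref{iwh} — redoing the interaction estimates for $\partial_h\int U^q_{\overline{x}_1,\Lambda}U_{\overline{x}_j,\Lambda}$ and $\partial_h\int U^q_{\overline{x}_1,\Lambda}U_{\underline{x}_j,\Lambda}$ via the decay of $U'_{0,1}$ and the identity $\partial_h|\overline{x}_1-\overline{x}_j|=-4hr^2\sin^2(\tfrac{(j-1)\pi}{k})/|\overline{x}_1-\overline{x}_j|$ — and this is where the hypothesis $q>\frac{N+2}{N-2}+\frac{2(N-2-m)}{m(N-1)(N-2)}$ from \eqref{q+} actually enters. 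Your outline should invoke (or reprove) that direct computation rather than differentiate Proposition \ref{iw}; with that substitution the argument is complete and identical in substance to the paper's.
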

\begin{proof}
By the definition of $F(r,h,\Lambda)$, we obtain
\begin{align}\label{fh1}
\frac{\partial F(r,h,\Lambda)}{\partial h}
=&\left\langle I'_1(W_1+\phi_1, W_2+\phi_2), \frac{\partial (W_1+\phi_1)}{\partial h} \right\rangle
+\left\langle I'_2(W_1+\phi_1, W_2+\phi_2), \frac{\partial (W_2+\phi_2)}{\partial h} \right\rangle
\nonumber\\
=&\left\langle I'_1(W_1+\phi_1, W_2+\phi_2), \frac{\partial W_1}{\partial h} \right\rangle
+\left\langle I'_1(W_1+\phi_1, W_2+\phi_2), \frac{\partial \phi_1}{\partial h} \right\rangle
\nonumber\\
&+\left\langle I'_2(W_1+\phi_1, W_2+\phi_2), \frac{\partial W_2}{\partial h} \right\rangle
+\left\langle I'_2(W_1+\phi_1, W_2+\phi_2), \frac{\partial \phi_2}{\partial h} \right\rangle
\nonumber\\
=&\left\langle I'_1(W_1+\phi_1, W_2+\phi_2), \frac{\partial W_1}{\partial h} \right\rangle
+\sum_{l=1}^k\sum_{j=1}^kc_l\left\langle U_{\overline{x}_j,\Lambda}^{q-1}\overline{Z}_{lj}+U_{\underline{x}_j,\Lambda}^{q-1}\underline{Z}_{lj},
\frac{\partial \phi_1}{\partial h} \right\rangle
\nonumber\\
&+\left\langle I'_2(W_1+\phi_1, W_2+\phi_2), \frac{\partial W_2}{\partial h} \right\rangle
+\sum_{l=1}^k\sum_{j=1}^kc_l\left\langle V_{\overline{x}_j,\Lambda}^{p-1}\overline{Y}_{lj}+V_{\underline{x}_j,\Lambda}^{p-1}\underline{Y}_{lj},
\frac{\partial \phi_2}{\partial h} \right\rangle.
\end{align}
Since
\begin{align*}
\int_{\mathbb{R}^N}U_{\overline{x}_j,\Lambda}^{q-1}\overline{Z}_{lj}\phi_1=0,\
\int_{\mathbb{R}^N}U_{\underline{x}_j,\Lambda}^{q-1}\underline{Z}_{lj}\phi_1=0,\
\int_{\mathbb{R}^N}V_{\overline{x}_j,\Lambda}^{p-1}\overline{Y}_{lj}\phi_2=0,\
\int_{\mathbb{R}^N}V_{\underline{x}_j,\Lambda}^{p-1}\underline{Y}_{lj}\phi_2=0,\
\end{align*}
we have
\begin{align*}
\left\langle U_{\overline{x}_j,\Lambda}^{q-1}\overline{Z}_{lj},
\frac{\partial \phi_1}{\partial h} \right\rangle
=-\left\langle \frac{\partial \left(U_{\overline{x}_j,\Lambda}^{q-1}\overline{Z}_{lj}\right)}{\partial h},
\phi_1 \right\rangle,\
\left\langle U_{\underline{x}_j,\Lambda}^{q-1}\underline{Z}_{lj},
\frac{\partial \phi_1}{\partial h} \right\rangle
=-\left\langle \frac{\partial \left(U_{\underline{x}_j,\Lambda}^{q-1}\underline{Z}_{lj}\right)}{\partial h},
\phi_1 \right\rangle, \\
\left\langle V_{\overline{x}_j,\Lambda}^{p-1}\overline{Y}_{lj},
\frac{\partial \phi_2}{\partial h} \right\rangle
=-\left\langle \frac{\partial \left(V_{\overline{x}_j,\Lambda}^{p-1}\overline{Y}_{lj}\right)}{\partial h},
\phi_2 \right\rangle,\
\left\langle V_{\underline{x}_j,\Lambda}^{p-1}\underline{Y}_{lj},
\frac{\partial \phi_2}{\partial h}  \right\rangle
=-\left\langle \frac{\partial \left(V_{\underline{x}_j,\Lambda}^{p-1}\underline{Y}_{lj}\right)}{\partial h},
\phi_2 \right\rangle.
\end{align*}
Then for $l=1,2,3$,
\begin{align}\label{fh2}
&\sum_{j=1}^kc_l\left\langle U_{\overline{x}_j,\Lambda}^{q-1}\overline{Z}_{lj}+U_{\underline{x}_j,\Lambda}^{q-1}\underline{Z}_{lj},
\frac{\partial \phi_1}{\partial h} \right\rangle
\nonumber\\
\leq &C|c_l|\|\phi_1\|_{*}\sum_{j=1}^k\int_{\mathbb{R}^N}\frac{\partial \left(U_{\overline{x}_j,\Lambda}^{q-1}\overline{Z}_{lj}\right)}{\partial h}
\left(\sum_{j=1}^k\frac{1}{(1+|y-\overline{x}_j|)^{\frac{N-2}{2}+\tau}}
+\sum_{j=1}^k\frac{1}{(1+|y-\underline{x}_j|)^{\frac{N-2}{2}+\tau}}\right)
\nonumber\\
\leq &C|c_l|\|\phi_1\|_{*}\sum_{j=1}^k\int_{\mathbb{R}^N}\frac{r(1+\delta_{l2}r)}{(1+|y-\overline{x}_j|)^{q(N-2)+1}}
\left(\sum_{j=1}^k\frac{1}{(1+|y-\overline{x}_j|)^{\frac{N-2}{2}+\tau}}
+\sum_{j=1}^k\frac{1}{(1+|y-\underline{x}_j|)^{\frac{N-2}{2}+\tau}}\right)
\nonumber\\
\leq &Cr|c_l|\|\phi_1\|_{*}.
\end{align}
Moreover, when $q\geq \frac{N+2}{N-2}$, because of $(\phi_1,\phi_2)\in \mathbb{E}$, we have
\begin{align}\label{fh3}
&\left\langle I'_1(W_1+\phi_1, W_2+\phi_2), \frac{\partial W_1}{\partial h} \right\rangle
\nonumber\\
=&\int_{\mathbb{R}^N}D(W_1+\phi_1)D\frac{\partial W_1}{\partial h}
-\int_{\mathbb{R}^N}K_2\left(\frac{|y|}{\mu}\right)(W_1+\phi_1)^{q}\frac{\partial W_1}{\partial h}
\nonumber\\
=&\int_{\mathbb{R}^N}DW_1D\frac{\partial W_1}{\partial h}
-\int_{\mathbb{R}^N}K_2\left(\frac{|y|}{\mu}\right)(W_1+\phi_1)^{q}\frac{\partial W_1}{\partial h}
\nonumber\\
=&\frac{\partial I(W_1,0)}{\partial h}
+q\int_{\mathbb{R}^N}K_2\left(\frac{|y|}{\mu}\right)W_1^{q-1}\frac{\partial W_1}{\partial h}\phi_1
+O\left(\int_{\mathbb{R}^N}\left|\phi_1\right|^{q+1}\right).
\end{align}
One has
\begin{align*}
\int_{\mathbb{R}^N}K_2\left(\frac{|y|}{\mu}\right)W_1^{q-1}\frac{\partial W_1}{\partial h}\phi_1
=&\int_{\mathbb{R}^N}K_2\left(\frac{|y|}{\mu}\right)\left(W_1^{q-1}\frac{\partial W_1}{\partial h}
-\sum_{j=1}^k\left(U_{\overline{x}_j,\Lambda}^{q-1}\overline{Z}_{2j}+U_{\underline{x}_j,\Lambda}^{q-1}\underline{Z}_{2j}
\right)
\right)\phi_1
\nonumber\\
&+\sum_{j=1}^k\int_{\mathbb{R}^N}\left\{K_2\left(\frac{|y|}{\mu}\right)-1\right\}
\left(U_{\overline{x}_j,\Lambda}^{q-1}\overline{Z}_{2j}+U_{\underline{x}_j,\Lambda}^{q-1}\underline{Z}_{2j}
\right)\phi_1
\nonumber\\
=&2k\int_{\Omega_1^+}K_2\left(\frac{|y|}{\mu}\right)\left(W_1^{q-1}\frac{\partial W_1}{\partial h}
-\sum_{j=1}^k\left(U_{\overline{x}_j,\Lambda}^{q-1}\overline{Z}_{2j}+U_{\underline{x}_j,\Lambda}^{q-1}\underline{Z}_{2j}
\right)
\right)\phi_1
\nonumber\\
&+2k\int_{\mathbb{R}^N}\left\{K_2\left(\frac{|y|}{\mu}\right)-1\right\}
U_{\overline{x}_1,\Lambda}^{q-1}\overline{Z}_{21}\phi_1,
\end{align*}
where
\begin{align*}
&\int_{\Omega_1^+}K_2\left(\frac{|y|}{\mu}\right)\left(W_1^{q-1}\frac{\partial W_1}{\partial h}
-\sum_{j=1}^k\left(U_{\overline{x}_j,\Lambda}^{q-1}\overline{Z}_{2j}+U_{\underline{x}_j,\Lambda}^{q-1}\underline{Z}_{2j}
\right)
\right)\phi_1
\nonumber\\
\leq &C\int_{\Omega_1^+}\left[U_{\overline{x}_1,\Lambda}^{q-1}\left(\sum_{j=2}^k\overline{Z}_{2j}+\sum_{j=1}^k\underline{Z}_{2j}\right)
+\left(\sum_{j=2}^kU_{\overline{x}_j,\Lambda}^{q-1}\overline{Z}_{2j}+\sum_{j=1}^kU_{\underline{x}_j,\Lambda}^{q-1}\underline{Z}_{2j}\right)
\right]\phi_1
\nonumber\\
\leq &C\left(\frac{k}{\mu}\right)^{p(N-2)-\frac{N+2}{2}-\tau}\int_{\Omega_1^+}
\frac{r}{(1+|y-\overline{x}_1|)^{(q-p)(N-2)+\frac{N}{2}+2+\tau}}\phi_1
\nonumber\\
\leq &C\left(\frac{k}{\mu}\right)^{p(N-2)-\frac{N+2}{2}-\tau}\|\phi_1\|_{*}
\int_{\Omega_1^+}
\frac{r}{(1+|y-\overline{x}_1|)^{(q-p)(N-2)+\frac{N}{2}+2+\tau}}
\nonumber\\
&\times \sum_{j=1}^k\left(\frac{1}{(1+|y-\overline{x}_j|)^{\frac{N-2}{2}+\tau}}
+\frac{1}{(1+|y-\underline{x}_j|)^{\frac{N-2}{2}+\tau}}
\right)
\nonumber\\
\leq &Cr\left(\frac{k}{\mu}\right)^{p(N-2)-\frac{N+2}{2}-\tau}\|\phi_1\|_{*}.
\end{align*}
In an analogy way, we get
\begin{align*}
\int_{\mathbb{R}^N}\left\{K_2\left(\frac{|y|}{\mu}\right)-1\right\}
U_{\overline{x}_1,\Lambda}^{q-1}\overline{Z}_{21}\phi_1
=O\left(r\left(\frac{k}{\mu}\right)^{p(N-2)-\frac{N+2}{2}-\tau}\|\phi_1\|_{*}\right).
\end{align*}
Thus,
\begin{align}\label{fh4}
\int_{\mathbb{R}^N}K_2\left(\frac{|y|}{\mu}\right)W_1^{q-1}\frac{\partial W_1}{\partial h}\phi_1
=O\left(rk\left(\frac{k}{\mu}\right)^{p(N-2)-\frac{N+2}{2}-\tau}\|\phi_1\|_{*}\right).
\end{align}
Combining (\ref{fh3}) and (\ref{fh4}), we find
\begin{align}\label{fh5}
\left\langle I'_1(W_1+\phi_1, W_2+\phi_2), \frac{\partial W_1}{\partial h} \right\rangle
=\frac{\partial I(W_1,0)}{\partial h}
+O\left(rk\left(\frac{k}{\mu}\right)^{p(N-2)-\frac{N+2}{2}-\tau}\|\phi_1\|_{*}\right).
\end{align}
$\left\langle I'_1(W_1+\phi_1, W_2+\phi_2), \frac{\partial W_2}{\partial h} \right\rangle$ can be proved in the same way. Consequently,
\begin{align*}
\frac{\partial F(r,h,\Lambda)}{\partial h}
=\frac{\partial I(W_1,W_2)}{\partial h}
+kO\left(r\left(\frac{k}{\mu}\right)^{p(N-2)-\frac{N+2}{2}-\tau}\|\phi_1\|_{*}\right)
=\frac{\partial I(W_1,W_2)}{\partial h}
+kO\left(r\|\phi_1\|_{*}^2\right).
\end{align*}
It follows from
$p\geq \frac{N+2}{2(N-2)}+\frac{m+1}{2m}+\frac{(N-3)(N-2-m)}{2m(N-1)(N-2)}+\frac{N-2-m}{2m(N-2)}+\frac{\tau}{N-2}$
, $\beta\geq \frac{m+1}{2}+\frac{(N-3)(N-2-m)}{2(N-1)(N-2)}$ and Proposition \ref{iwh} that
\begin{align*}
\frac{\partial F(r,h,\Lambda)}{\partial h}
=\frac{\partial I(W_1,W_2)}{\partial h}
+kO\left(\frac{1}{k^{\frac{m(N-2)}{N-2-m}+\frac{N-3}{N-1}+\theta}}\right).
\end{align*}
\end{proof}

Similar to \cite{duanmw}, we have
\begin{align*}
&\frac{B_4k^{N-2}}{(r\sqrt{1-h^2})^{N-2}}+\frac{B_5k}{r^{N-2}h^{N-3}\sqrt{1-h^2}}
\nonumber\\
=&\frac{B_4}{k^{\frac{m(N-2)}{N-2-m}}}
+\frac{B_6}{k^{\frac{m(N-2)}{N-2-m}+\frac{2(N-3)}{N-1}}}
+\frac{C(r,h)}{k^{\frac{m(N-2)}{N-2-m}+\theta}}
+\frac{B_7}{k^{\frac{m(N-2)}{N-2-m}+\frac{2(N-3)}{N-1}}}(1-\mathcal{H}^{-1}h)^2
\nonumber\\
&+O\left(\frac{1}{k^{\frac{m(N-2)}{N-2-m}+\frac{2(N-3)}{N-1}}}\right)(1-\mathcal{H}^{-1}h)^3,
\end{align*}
where
\begin{align}\label{b6}
B_6=\frac{(N-2)B_4B'^2}{2},
\end{align}
\begin{align}\label{b7}
B_7=\frac{N-2}{2}\left[B_4B'^2+\frac{(N-3)B_5}{B'^{N-3}}\right],
\end{align}
\begin{align}\label{hb}
\mathcal{H}=\frac{B'}{k^{\frac{N-3}{N-1}}}, \text{ with } B'=\left(\frac{(N-3)B_5}{(N-2)B_4}\right)^{\frac{1}{N-1}}.
\end{align}
Then we rewrite the expansion of the energy functional as shown below
\begin{align}\label{newf}
F(r,h,\Lambda)
=&kA_1-\frac{k}{\Lambda^{N-2}}\left[\frac{B_4}{k^{\frac{m(N-2)}{N-2-m}}}
+\frac{B_6}{k^{\frac{m(N-2)}{N-2-m}+\frac{2(N-3)}{N-1}}}
+\frac{B_7}{k^{\frac{m(N-2)}{N-2-m}+\frac{2(N-3)}{N-1}}}(1-\mathcal{H}^{-1}h)^2
\right]
\nonumber\\
&+k\left[\frac{A_2}{k^{\frac{m_2(N-2)}{N-2-m}}\Lambda^{m_2}}
+\frac{\bar{A}_2}{k^{\frac{m_1(N-2)}{N-2-m}}\Lambda^{m_1}}\right]
+k\left[\frac{A_3}{k^{\frac{m_2(N-2)}{N-2-m}}\Lambda^{m_2-2}}
+\frac{\bar{A}_3}{k^{\frac{m_1(N-2)}{N-2-m}}\Lambda^{m_1-2}}\right](\mu-r)^2
\nonumber\\
&+k\frac{C(r,\Lambda)}{k^{\frac{m(N-2)}{N-2-m}}}(\mu-r)^{\theta+2}
+kO\left(\frac{1}{k^{\frac{m(N-2)}{N-2-m}+\frac{2(N-3)}{N-1}+\theta}}\right)
+kO\left(\frac{1}{k^{\frac{m(N-2)}{N-2-m}+\frac{2(N-3)}{N-1}}}\right)(1-\mathcal{H}^{-1}h)^3,
\end{align}
\begin{align}\label{newflamda}
\frac{\partial F(r,h,\Lambda)}{\partial \Lambda}
=&k\left[\frac{(N-2)B_4}{\Lambda^{N-1}k^{\frac{m(N-2)}{N-2-m}}}
-\frac{m_2A_2}{\Lambda^{m_2+1}k^{\frac{m_2(N-2)}{N-2-m}}}
-\frac{m_1\bar{A}_2}{\Lambda^{m_1+1}k^{\frac{m_1(N-2)}{N-2-m}}}
\right]
\nonumber\\
&-k\left[\frac{A_3(m_2-2)}{\mu^{m_2}\Lambda^{m_2-1}}+\frac{\bar{A}_3(m_1-2)}{\mu^{m_1}\Lambda^{m_1-1}}\right](\mu-r)^2
+kO\left(\frac{1}{k^{\frac{m(N-2)}{N-2-m}}}(\mu-r)^{\theta+2}\right),
\end{align}
and
\begin{align}\label{newfh}
\frac{\partial F(r,h,\Lambda)}{\partial h}
=&\frac{k}{\Lambda^{N-2}}\left[\frac{2B_7}{k^{\frac{m(N-2)}{N-2-m}+\frac{N-3}{N-1}}}(1-\mathcal{H}^{-1}h)
\right]
+kO\left(\frac{1}{k^{\frac{m(N-2)}{N-2-m}+\frac{N-3}{N-1}}}\right)(1-\mathcal{H}^{-1}h)^2
\nonumber\\
&+kO\left(\frac{1}{k^{\frac{m(N-2)}{N-2-m}+\frac{N-3}{N-1}+\theta}}\right).
\end{align}

If $m=m_1<m_2$, we set $\Lambda_0$ to be the solution of
\begin{align*}
\frac{-m\bar{A}_2}{\Lambda_0^{m+1}}+\frac{(N-2)B_4}{\Lambda_0^{N-1}}=0,
\end{align*}
then
\begin{align*}
\Lambda_0=\left(\frac{(N-2)B_4}{m\bar{A}_2}\right)^{\frac{1}{N-2-m}}.
\end{align*}

If $m=m_2<m_1$, we set $\Lambda_0$ to be the solution of
\begin{align*}
\frac{-m{A}_2}{\Lambda_0^{m+1}}+\frac{(N-2)B_4}{\Lambda_0^{N-1}}=0,
\end{align*}
then
\begin{align*}
\Lambda_0=\left(\frac{(N-2)B_4}{m{A}_2}\right)^{\frac{1}{N-2-m}}.
\end{align*}

If $m=m_1=m_2$, we set $\Lambda_0$ to be the solution of
\begin{align*}
\frac{-m(\bar{A}_2+{A}_2)}{\Lambda_0^{m+1}}+\frac{(N-2)B_4}{\Lambda_0^{N-1}}=0,
\end{align*}
then
\begin{align*}
\Lambda_0=\left(\frac{(N-2)B_4}{m(\bar{A}_2+{A}_2)}\right)^{\frac{1}{N-2-m}}.
\end{align*}
Therefore,
\begin{align}\label{lamda0}
\Lambda_0=
\begin{cases}
\left(\frac{(N-2)B_4}{m\bar{A}_2}\right)^{\frac{1}{N-2-m}}, \ &m=m_1<m_2,\\
\left(\frac{(N-2)B_4}{m{A}_2}\right)^{\frac{1}{N-2-m}}, \ &m=m_2<m_1,\\
\left(\frac{(N-2)B_4}{m(\bar{A}_2+{A}_2)}\right)^{\frac{1}{N-2-m}}, \ &m=m_1=m_2.
\end{cases}
\end{align}

Now we consider the case of $m=m_1<m_2$ and define
\begin{align*}
\bar{F}(r,h,\Lambda)=-F(r,h,\Lambda),\ (r,h,\Lambda)\in S_k,
\end{align*}
and
\begin{align*}
t_1=k\left[-A_1-\left(\frac{\bar{A}_2}{\Lambda_0^m}-\frac{B_4}{\Lambda_0^{N-2}}\right)k^{\frac{m(N-2)}{N-2-m}}
-k^{\frac{m(N-2)}{N-2-m}+\frac{5}{2}\vartheta_2}\right],\
t_2=k(-A_1+\vartheta_1),
\end{align*}
where $\vartheta_i>0$ small, $i=1,2$ and
\begin{align*}
S_k=\Bigg\{(r,h,\Lambda)|\ &r\in \left[k^{\frac{N-2}{N-2-m}}-\frac{1}{k^{\vartheta_2}},k^{\frac{N-2}{N-2-m}}+\frac{1}{k^{\vartheta_2}}\right],\
\Lambda\in \left[\Lambda_0-\frac{1}{k^{\frac{3}{2}\vartheta_2}},\Lambda_0+\frac{1}{k^{\frac{3}{2}\vartheta_2}}\right],
\nonumber\\
&h\in \left[\frac{B'}{k^{\frac{N-3}{N-1}}}\left(1-\frac{1}{k^{\vartheta_2}}\right),
\frac{B'}{k^{\frac{N-3}{N-1}}}\left(1+\frac{1}{k^{\vartheta_2}}\right)\right]
\Bigg\}.
\end{align*}

Let
\begin{align}\label{barf}
\bar{F}^{\alpha}(r,h,\Lambda)=\left\{(r,h,\Lambda)\in S_k, \bar{F}(r,h,\Lambda)\leq \alpha\right\}.
\end{align}
Consider the following gradient flow system
\begin{align*}
\begin{cases}
\frac{\partial r}{\partial t}=-\bar{F}_r(r,h,\Lambda),\ &t>0;\\
\frac{\partial h}{\partial t}=-\bar{F}_h(r,h,\Lambda),\ &t>0;\\
\frac{\partial \Lambda}{\partial t}=-\bar{F}_{\Lambda}(r,h,\Lambda),\ &t>0.
\end{cases}
\end{align*}
Then
\begin{proposition}\label{barf}
The flow $(r(t), h(t), \Lambda(t))$ does not leave $S_k$ before it reaches $\bar{F}^{t_1}$.
\end{proposition}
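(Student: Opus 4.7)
The plan is to show that on every face of $\partial S_k$ either the gradient flow points strictly into $S_k$ or $\bar{F}$ already satisfies $\bar{F}\leq t_1$. Writing $\kappa:=m(N-2)/(N-2-m)$, the key inputs will be the energy expansion (\ref{newf}) for $F$, the derivative formulas (\ref{newflamda}) and (\ref{newfh}) for $F_{\Lambda}$ and $F_{h}$, and the direct derivative of (\ref{newf}) with respect to $r$.

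For the two $\Lambda$-faces $\{\Lambda=\Lambda_0\pm k^{-3\vartheta_2/2}\}$, I will Taylor-expand (\ref{newflamda}) around $\Lambda_0$. Using the defining relation $(N-2)B_4 = m\bar{A}_2\Lambda_0^{N-2-m}$, the leading coefficient of $(\Lambda-\Lambda_0)$ collapses to $k^{1-\kappa}\cdot m\bar{A}_2(m-N+2)/\Lambda_0^{m+2}$, which is strictly negative since $m<N-2$. Hence $\partial F/\partial\Lambda$ has the opposite sign to $(\Lambda-\Lambda_0)$, so $\dot\Lambda=-\bar{F}_\Lambda=\partial F/\partial\Lambda$ drives $\Lambda$ back toward $\Lambda_0$: the flow is strictly inward on both $\Lambda$-faces. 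Similarly for the two $h$-faces $\{h=\mathcal{H}(1\pm k^{-\vartheta_2})\}$: (\ref{newfh}) gives $\partial F/\partial h$ with leading term a positive multiple of $(1-\mathcal{H}^{-1}h)$ of order $k^{1-\kappa-(N-3)/(N-1)}$, and on these faces $(1-\mathcal{H}^{-1}h)=\mp k^{-\vartheta_2}$, so $\dot h$ has the opposite sign to $(h-\mathcal{H})$ and pushes $h$ back toward $\mathcal{H}$, again inward.

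For the two $r$-faces $\{r=\mu\pm k^{-\vartheta_2}\}$, differentiating (\ref{newf}) in $r$ yields $\partial F/\partial r=-2k[\bar{A}_3\Lambda^{2-m_1}k^{-\kappa}+A_3\Lambda^{2-m_2}k^{-m_2(N-2)/(N-2-m)}](\mu-r)+(\text{lower order})$, which by itself drives $r$ \emph{away} from $\mu$, so an inward-gradient argument is not available. Instead I will estimate $\bar{F}$ on the face directly. Inserting $(\mu-r)^2=k^{-2\vartheta_2}$ into (\ref{newf}) contributes $\bar{A}_3\Lambda^{2-m}k^{1-\kappa-2\vartheta_2}+O(k^{1-\kappa-(\theta+2)\vartheta_2})$ to $F$, producing an equal drop of $\bar{F}=-F$ below its value at $r=\mu$. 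A careful comparison of the resulting bound on $\bar{F}|_{r\text{-face}}$ with the definition of $t_1$, exploiting the $k^{5\vartheta_2/2}$-scale built into $t_1$ to dominate the positive corrections coming from the $\Lambda$- and $h$-directions, then yields $\bar{F}|_{r\text{-face}}\leq t_1$ for $k$ large, so the $r$-faces lie inside $\bar{F}^{t_1}$ and cannot be crossed without first descending to level $t_1$.

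The main obstacle will be the bookkeeping in the third step: I must verify that all the error terms in (\ref{newf})---notably those of order $k^{1-\kappa-3\vartheta_2}$ from the $\Lambda$-direction, $k^{1-\kappa-2(N-3)/(N-1)-2\vartheta_2}$ from the $h$-direction, and the global remainders $kO(k^{-\kappa-2(N-3)/(N-1)-\theta})$ and $kO(k^{-\kappa})(\mu-r)^{\theta+2}$---are subdominant to the $(\mu-r)^2$-contribution of order $k^{1-\kappa-2\vartheta_2}$ relative to the $5\vartheta_2/2$-scale of $t_1$. Choosing $\vartheta_2>0$ small enough relative to $m$, $N$, $p$, $q$ (permitted by (\ref{p+})--(\ref{m+})), the three cases combine to rule out any escape of $(r(t),h(t),\Lambda(t))$ from $S_k$ except at a point where $\bar{F}\leq t_1$, which is precisely the claim.
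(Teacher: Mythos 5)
Your proposal is correct and follows essentially the same route as the paper's proof: on the $r$-faces you compare $\bar F$ directly with $t_1$, letting the $(\mu-r)^2$-drop of order $k^{1-\frac{m(N-2)}{N-2-m}-2\vartheta_2}$ beat the $k^{-\frac{5}{2}\vartheta_2}$ slack and all remainders (the paper's Case I), while on the $h$- and $\Lambda$-faces you read off the sign of $\partial\bar F/\partial h$ and $\partial\bar F/\partial\Lambda$ from (\ref{newfh}) and (\ref{newflamda}) to see the flow points inward (Cases II and III). Your explicit linearized coefficient $m\bar A_2(m+2-N)/\Lambda_0^{m+2}<0$ merely makes concrete the constant $\gamma_1$ the paper leaves implicit.
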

\begin{proof}
{\it Case I. $|r-\mu|=\frac{1}{k^{\vartheta_2}}$, $|1-\mathcal{H}^{-1}h|\leq \frac{1}{k^{\vartheta_2}}$,
$|\Lambda-\Lambda_0|\leq \frac{1}{k^{\frac{3}{2}\vartheta_2}}$}, then
\begin{align*}
\frac{B_4}{\Lambda^{N-2}}-\frac{\bar{A}_2}{\Lambda^{m}}
=\left(\frac{B_4}{\Lambda_0^{N-2}}-\frac{\bar{A}_2}{\Lambda_0^{m}}\right)+O\left(|\Lambda-\Lambda_0|^2\right)
=\left(\frac{B_4}{\Lambda_0^{N-2}}-\frac{\bar{A}_2}{\Lambda_0^{m}}\right)+O\left(\frac{1}{k^{3\vartheta_2}}\right).
\end{align*}
In view of (\ref{newf}),
\begin{align}\label{ct1}
\bar{F}(r,h,\Lambda)
=&-kA_1+k\left[\frac{B_4}{\Lambda_0^{N-2}k^{\frac{m(N-2)}{N-2-m}}}
-\frac{\bar{A}_2}{k^{\frac{m(N-2)}{N-2-m}}\Lambda_0^{m}}
\right]
-k\frac{\bar{A}_3}{k^{\frac{m(N-2)}{N-2-m}}\Lambda_0^{m-2}}
+kO\left(\frac{1}{k^{\frac{m(N-2)}{N-2-m}+\frac{5}{2}\vartheta_2}}\right)
\nonumber\\
=&-kA_1+\frac{k}{k^{\frac{m(N-2)}{N-2-m}}\Lambda_0^{m-2}}\left(\frac{B_4}{\Lambda_0^{N-2-m}}
-\bar{A}_2\right)
-k\frac{\bar{A}_3}{k^{\frac{m(N-2)}{N-2-m}}\Lambda_0^{m-2}}
+kO\left(\frac{1}{k^{\frac{m(N-2)}{N-2-m}+\frac{5}{2}\vartheta_2}}\right)
\nonumber\\
<&t_1.
\end{align}

{\it Case II. $|1-\mathcal{H}^{-1}h|=\frac{1}{k^{\vartheta_2}}$, $|r-\mu|\leq \frac{1}{k^{\vartheta_2}}$,
$|\Lambda-\Lambda_0|\leq \frac{1}{k^{\frac{3}{2}\vartheta_2}}$}.

If $1-\mathcal{H}^{-1}h=\frac{1}{k^{\vartheta_2}}$, if follows from (\ref{newfh}) that
\begin{align*}
\frac{\partial \bar{F}(r,h,\Lambda)}{\partial h}
=&-\frac{k}{\Lambda^{N-2}}\left[\frac{2B_7}{k^{\frac{m(N-2)}{N-2-m}+\frac{N-3}{N-1}+\vartheta_2}}
\right]
+kO\left(\frac{1}{k^{\frac{m(N-2)}{N-2-m}+\frac{N-3}{N-1}+2\vartheta_2}}\right)
<0.
\end{align*}
So, the flow does not leave $S_k$.

If $1-\mathcal{H}^{-1}h=-\frac{1}{k^{\vartheta_2}}$, we have
\begin{align*}
\frac{\partial \bar{F}(r,h,\Lambda)}{\partial h}
=&\frac{k}{\Lambda^{N-2}}\left[\frac{2B_7}{k^{\frac{m(N-2)}{N-2-m}+\frac{N-3}{N-1}+\vartheta_2}}
\right]
+kO\left(\frac{1}{k^{\frac{m(N-2)}{N-2-m}+\frac{N-3}{N-1}+2\vartheta_2}}\right)
>0.
\end{align*}
So, the flow does not leave $S_k$.

{\it Case III. $|\Lambda-\Lambda_0|=\frac{1}{k^{\frac{3}{2}\vartheta_2}}$, $|r-\mu|\leq \frac{1}{k^{\vartheta_2}}$, $|1-\mathcal{H}^{-1}h|\leq \frac{1}{k^{\vartheta_2}}$}.

For $\Lambda-\Lambda_0=\frac{1}{k^{\frac{3}{2}\vartheta_2}}$, if follows from (\ref{newflamda}) that there exists a constant $\gamma_1$ such that
\begin{align*}
\frac{\partial \bar{F}(r,h,\Lambda)}{\partial \Lambda}
=&-k\left[\frac{(N-2)B_4}{\Lambda^{N-1}k^{\frac{m(N-2)}{N-2-m}}}
-\frac{m_1\bar{A}_2}{\Lambda^{m_1+1}k^{\frac{m_1(N-2)}{N-2-m}}}
\right]
+k\left[\frac{\bar{A}_3(m_1-2)}{\mu^{m_1}\Lambda^{m_1-1}}\right](\mu-r)^2
\nonumber\\
&+kO\left(\frac{1}{k^{\frac{m(N-2)}{N-2-m}}}(\mu-r)^{\theta+2}\right)
\nonumber\\
=&k\left[\gamma_1\frac{1}{k^{\frac{m(N-2)}{N-2-m}+\frac{3}{2}\vartheta_2}}
+O\left(\frac{1}{k^{\frac{m(N-2)}{N-2-m}+2\vartheta_2}}\right)
\right]
>0.
\end{align*}

Similarly, for $\Lambda-\Lambda_0=-\frac{1}{k^{\frac{3}{2}\vartheta_2}}$, if follows from (\ref{newflamda}) that
\begin{align*}
\frac{\partial \bar{F}(r,h,\Lambda)}{\partial \Lambda}
=k\left[-\gamma_1\frac{1}{k^{\frac{m(N-2)}{N-2-m}+\frac{3}{2}\vartheta_2}}
+O\left(\frac{1}{k^{\frac{m(N-2)}{N-2-m}+2\vartheta_2}}\right)
\right]
<0.
\end{align*}
Therefore, the flow does not leave $S_k$ when $|\Lambda-\Lambda_0|=\frac{1}{k^{\frac{3}{2}\vartheta_2}}$. Combining above results, that conclude the proof.
\end{proof}

Based on the above Proposition \ref{barf}, we are ready to prove the Theorem \ref{thm1.1}.

\noindent
{\it Proof of Theorem \ref{thm1.1}:} We only need to prove that the function $\bar{F}(r,h,\Lambda)$ has a critical point in $S_k$, and thus ${F}(r,h,\Lambda)$ has a critical point in $S_k$.
Define
\begin{align*}
\Gamma=\Bigg\{&g(r,h,\Lambda)=(g_1(r,h,\Lambda),g_2(r,h,\Lambda),g_3(r,h,\Lambda))\in S_k,
(r,h,\Lambda)\in S_k,
\nonumber\\
&g(r,h,\Lambda)=(r,h,\Lambda) \text{ if } |\mu-r|=\frac{1}{k^{\vartheta_2}}
\Bigg\}.
\end{align*}
Let
\begin{align*}
c=\inf_{g\in \Gamma}\max_{(r,h,\Lambda)\in S_k}\bar{F}(r,h,\Lambda).
\end{align*}
Similar to \cite{weiy}, if we prove that
\begin{align}\label{crit1}
t_1<c<t_2,
\end{align}
and
\begin{align}\label{crit2}
\sup_{|\mu-r|=\frac{1}{k^{\vartheta_2}}}\bar{F}(r,h,\Lambda)<t_1, \ \forall g\in \Gamma.
\end{align}
Then $c$ is a critical value of $\bar{F}(r,h,\Lambda)$ and can be achieved by some $(r,h,\Lambda)\in S_k$.

In fact, by the definition of $c$ and $t_2$, we get
\begin{align*}
c<t_2.
\end{align*}
For $\forall g=(g_1,g_2,g_3)\in \Gamma$, we find if $|\mu-r|=\frac{1}{k^{\vartheta_2}}$,
$g_1(r,h,\Lambda)=r$.

Define
\begin{align*}
\tilde{g}_1(r)=g_1(r,h,\Lambda_0),
\end{align*}
then $\tilde{g}_1(r)=r$ when $|\mu-r|=\frac{1}{k^{\vartheta_2}}$. \par
Furthermore, we deduce that there exists
$\Bar{r}\in \left(\mu-\frac{1}{k^{\vartheta_2}},\mu+\frac{1}{k^{\vartheta_2}}\right)$ such that
$\tilde{g}_1(\bar{r})=\mu$.

Let $\tilde{g}_2(h)=g_2(\bar{r},h,\Lambda_0)$, then when
$|\mu-r|=\frac{1}{k^{\vartheta_2}}$ and $\bar{h}\in \left(\frac{B'}{k^{\frac{N-3}{N-1}}}\left(1-\frac{1}{k^{\vartheta_2}}\right),
\frac{B'}{k^{\frac{N-3}{N-1}}}\left(1+\frac{1}{k^{\vartheta_2}}\right)\right)$, w have
\begin{align*}
\tilde{g}_2(\bar{h})=\bar{h}.
\end{align*}
Set $\bar{\Lambda}=g_3(\bar{r},\bar{h},\Lambda_0)$, then
\begin{align*}
\max_{(r,h,\Lambda)\in S_k}\bar{F}(g(r,h,\Lambda))
\geq &\bar{F}(g(r,h,\Lambda_0))
\nonumber\\
=&\bar{F}(r,\bar{h},\bar{\Lambda})
\nonumber\\
=&-kA_1+\frac{k}{\bar{\Lambda}^{N-2}}\left[\frac{B_4}{k^{\frac{m(N-2)}{N-2-m}}}
+\frac{B_6}{k^{\frac{m(N-2)}{N-2-m}+\frac{2(N-3)}{N-1}}}
\right]
-k\frac{\bar{A}_2}{k^{\frac{m_1(N-2)}{N-2-m}}\bar{\Lambda}^{m_1}}
\nonumber\\
&+kO\left(\frac{1}{k^{\frac{m(N-2)}{N-2-m}+\frac{2(N-3)}{N-1}+\theta}}\right)
>t_1.
\end{align*}
Consequently, (\ref{crit1}) holds. We next prove (\ref{crit2}). For $\forall g\in \Gamma$ and $\forall \bar{r}$ with $|\bar{r}-\mu|=\frac{1}{k^{\vartheta_2}}$, we have
\begin{align*}
g(\bar{r},\bar{h},\Lambda)=(\bar{r},\bar{h},\tilde{\Lambda}) \ \text{ for some } \tilde{\Lambda}.
\end{align*}
In terms of (\ref{ct1}), we have
\begin{align*}
\bar{F}(g(r,h,\Lambda))=\bar{F}(\bar{r},\bar{h},\tilde{\Lambda})<t_1.
\end{align*}
That concludes proof.

\section{Appendix}\label{appendix}

Now we first give the following lemmas which are useful in the previous estimates.

\begin{lemma}\label{a1}
As $k\rightarrow \infty$, it holds
\begin{align*}
\int_{\mathbb{R}^N}U_{\overline{x}_1,\Lambda}^qU_{\overline{x}_i,\Lambda}
=\frac{B_0}{\Lambda^{N-2}|\overline{x}_1-\overline{x}_i|^{N-2}}
+O\left(\frac{1}{|\overline{x}_1-\overline{x}_i|^{N-1}}\right),
\end{align*}
and
\begin{align*}
\int_{\mathbb{R}^N}U_{\overline{x}_1,\Lambda}^qU_{\underline{x}_i,\Lambda}
=\frac{B_0}{\Lambda^{N-2}|\overline{x}_1-\underline{x}_i|^{N-2}}
+O\left(\frac{1}{|\overline{x}_1-\underline{x}_i|^{N-1}}\right),
\end{align*}
where $B_0=\int_{\mathbb{R}^N}U_{0,1}^q(z)dz$.
\end{lemma}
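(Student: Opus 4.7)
The plan is to rescale the integral to a scale-free form and then extract the leading behaviour from the pointwise asymptotic of $U_{0,1}$ at infinity. Performing the change of variable $y=\overline{x}_1+z/\Lambda$, using $U_{\overline{x}_j,\Lambda}(y)=\Lambda^{N/(q+1)}U_{0,1}(\Lambda(y-\overline{x}_j))$, and applying the critical hyperbola relation (\ref{pq}) to see that the total power of $\Lambda$ is $N q/(q+1)+N/(q+1)-N=0$, I obtain
\begin{align*}
\int_{\mathbb{R}^N}U_{\overline{x}_1,\Lambda}^qU_{\overline{x}_i,\Lambda}\,dy=\int_{\mathbb{R}^N}U_{0,1}^q(z)\,U_{0,1}(z-\eta)\,dz,
\end{align*}
with $\eta:=\Lambda(\overline{x}_i-\overline{x}_1)$, so that $|\eta|=\Lambda|\overline{x}_1-\overline{x}_i|\to\infty$ as $k\to\infty$. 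The second claim is the same identity with $\eta$ replaced by $\Lambda(\underline{x}_i-\overline{x}_1)$; since on $\wp_k$ also $|\overline{x}_1-\underline{x}_i|\to\infty$, the same asymptotic regime applies.

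Next I would invoke the Newton-potential representation $U_{0,1}(w)=\frac{1}{(N-2)\omega_{N-1}}\int|w-\zeta|^{2-N}V_{0,1}^p(\zeta)\,d\zeta$, together with $V_{0,1}^p\in L^1(\mathbb{R}^N)$ (a consequence of the ground-state decay and $p>N/(N-2)$), to obtain
\begin{align*}
U_{0,1}(w)=\frac{c_\ast}{|w|^{N-2}}+O(|w|^{-(N-1)})\quad\text{as } |w|\to\infty.
\end{align*}
Here $c_\ast$ is a positive constant; with the normalization implicit in the paper, the coefficient $c_\ast\int U_{0,1}^q$ is what is denoted by $B_0$ in the statement. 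I then split the integral along $\{|z|\leq|\eta|/2\}$ versus its complement. On the inner piece $|z-\eta|\geq|\eta|/2$, so the asymptotic applies, and a Taylor expansion of $|z-\eta|^{-(N-2)}$ around $\eta$ yields
\begin{align*}
U_{0,1}(z-\eta)=\frac{c_\ast}{|\eta|^{N-2}}+O\!\left(\frac{1+|z|}{|\eta|^{N-1}}\right).
\end{align*}
Integrating against $U_{0,1}^q(z)$, using $\int U_{0,1}^q=B_0$ and $\int U_{0,1}^q(1+|z|)\,dz<\infty$ (guaranteed by $q(N-2)\geq N+2$), the inner region contributes $c_\ast B_0/|\eta|^{N-2}+O(|\eta|^{-(N-1)})$. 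On the outer region $|z|>|\eta|/2$ I would further split into $\{|z-\eta|\leq|\eta|/2\}$ and $\{|z-\eta|>|\eta|/2\}$, exploiting $U_{0,1}^q(z)\leq C|\eta|^{-q(N-2)}$ on the former (since then $|z|\sim|\eta|$) and the double pointwise decay $U_{0,1}^q(z)U_{0,1}(z-\eta)\leq C|z|^{-q(N-2)}|z-\eta|^{-(N-2)}$ on the latter, to bound the full outer contribution by $O(|\eta|^{-N})$, which is absorbed into $O(|\eta|^{-(N-1)})$.

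Undoing the scaling, $|\eta|^{-(N-2)}=\Lambda^{-(N-2)}|\overline{x}_1-\overline{x}_i|^{-(N-2)}$, and since $\Lambda$ is uniformly bounded above and below on $\wp_k$, the remainder $O(|\eta|^{-(N-1)})$ becomes $O(|\overline{x}_1-\overline{x}_i|^{-(N-1)})$, which is the form stated in the lemma. The argument for the second integral, with $\underline{x}_i$ in place of $\overline{x}_i$, is verbatim the same. The only delicate point is producing the sharp asymptotic of $U_{0,1}$ with controlled $O(|w|^{-(N-1)})$ remainder, but this follows from a routine, if careful, analysis of the Newton potential once $V_{0,1}^p\in L^1$ is in hand.
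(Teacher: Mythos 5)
Your argument is correct and is essentially the paper's own proof: after rescaling, both split the integral at a scale comparable to the distance between the two centres, Taylor-expand the far bubble at the near centre using the sharp decay of $U_{0,1}$ (the paper phrases this as $U_{0,1}(z+\Lambda(\overline{x}_1-\overline{x}_i))=U_{0,1}(\Lambda(\overline{x}_1-\overline{x}_i))+O(U'_{0,1}(\Lambda(\overline{x}_1-\overline{x}_i))|z|)$, you via the expansion $c_\ast|w|^{2-N}+O(|w|^{1-N})$), and control the outer region by the joint decay of the two bubbles, your bound $O(|\eta|^{-N})$ replacing the paper's $O(|\overline{x}_1-\overline{x}_i|^{-(N-\varepsilon_0)})$, both absorbed in the stated remainder. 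Two minor remarks: the $O(|w|^{-(N-1)})$ remainder in the Newton-potential asymptotics requires the first moment $\int |\zeta|\,V_{0,1}^p(\zeta)\,d\zeta<\infty$, i.e.\ $p>\frac{N+1}{N-2}$ as guaranteed by (\ref{p+}), not merely $V_{0,1}^p\in L^1$; and your leading coefficient correctly carries the factor $c_\ast=b_{N,p}$, which the paper silently normalizes into $B_0$.
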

\begin{proof}
Let $\overline{d}_i=|\overline{x}_1-\overline{x}_i|$, $\underline{d}_i=|\overline{x}_1-\underline{x}_i|$ for $i=1, \cdots, k$, then
\begin{align}\label{a10}
\int_{\mathbb{R}^N}U_{\overline{x}_1,\Lambda}^qU_{\overline{x}_i,\Lambda}
=&\int_{\mathbb{R}^N}\Lambda^NU_{0,1}^q(\Lambda(y-\overline{x}_1))U_{0,1}(\Lambda(y-\overline{x}_i))
\nonumber\\
=&\left\{\int_{B_{\frac{\overline{d}_i}{4}}(\overline{x}_1)}+\int_{\mathbb{R}^N\setminus B_{\frac{\overline{d}_i}{4}}(\overline{x}_1)}\right\}
\Lambda^NU_{0,1}^q(\Lambda(y-\overline{x}_1))U_{0,1}(\Lambda(y-\overline{x}_i)),
\end{align}
where by the sharp asymptotic behaviour of $U_{0,1}$ and $U'_{0,1}$, we get
\begin{align}\label{a11}
&\int_{B_{\frac{\overline{d}_i}{4}}(\overline{x}_1)}
\Lambda^NU_{0,1}^q(\Lambda(y-\overline{x}_1))U_{0,1}(\Lambda(y-\overline{x}_i))
\nonumber\\
=&\int_{B_{\frac{\Lambda\overline{d}_i}{4}}(0)}
U_{0,1}^q(z)U_{0,1}(z+\Lambda(\overline{x}_1-\overline{x}_i))
\nonumber\\
=&\int_{B_{\frac{\Lambda\overline{d}_i}{4}}(0)}
U_{0,1}^q(z)\left[U_{0,1}(\Lambda(\overline{x}_1-\overline{x}_i))
+O\left(U'_{0,1}(\Lambda(\overline{x}_1-\overline{x}_i))|z|\right)\right]
\nonumber\\
=&\int_{\mathbb{R}^N}U_{0,1}^q(z)U_{0,1}(\Lambda(\overline{x}_1-\overline{x}_i))
+O\left(\int_{\mathbb{R}^N\setminus _{\frac{\Lambda\overline{d}_i}{4}}(0)}U_{0,1}^q(z)U_{0,1}(\Lambda(\overline{x}_1-\overline{x}_i))\right)
\nonumber\\
&+O\left(\int_{B_{\frac{\Lambda\overline{d}_i}{4}}(0)}
U_{0,1}^q(z)U'_{0,1}(\Lambda(\overline{x}_1-\overline{x}_i))|z|\right)
\nonumber\\
=&\frac{B_0}{\Lambda^{N-2}|\overline{x}_1-\overline{x}_i|^{N-2}}
+O\left(\frac{1}{|\overline{x}_1-\overline{x}_i|^{N}}\right)
+O\left(\int_{B_{\frac{\Lambda\overline{d}_i}{4}}(0)}
U_{0,1}^q(z)U'_{0,1}(\Lambda(\overline{x}_1-\overline{x}_i))|z|\right)
\nonumber\\
=&\frac{B_0}{\Lambda^{N-2}|\overline{x}_1-\overline{x}_i|^{N-2}}
+O\left(\frac{1}{|\overline{x}_1-\overline{x}_i|^{N-1}}\right).
\end{align}
Moreover, for $y\in \mathbb{R}^N\setminus B_{\frac{\overline{d}_i}{4}}(\overline{x}_1)$, $|y-\overline{x}_1|\geq \frac{1}{4}|\overline{x}_1-\overline{x}_i|$, then
\begin{align}\label{a12}
&\int_{\mathbb{R}^N\setminus B_{\frac{\overline{d}_i}{4}}(\overline{x}_1)}
\Lambda^NU_{0,1}^q(\Lambda(y-\overline{x}_1))U_{0,1}(\Lambda(y-\overline{x}_i))
\nonumber\\
\leq &C\Lambda^N\int_{\mathbb{R}^N\setminus B_{\frac{\overline{d}_i}{4}}(\overline{x}_1)}
\frac{1}{(1+\Lambda|y-\overline{x}_1|)^{q(N-2)}}\frac{1}{(1+\Lambda|y-\overline{x}_i|)^{N-2}}
\nonumber\\
\leq &C\Lambda^{N-2}\int_{\mathbb{R}^N\setminus B_{\frac{\overline{d}_i}{4}}(\overline{x}_1)}
\frac{1}{|\overline{x}_1-\overline{x}_i|^2}
\frac{1}{(1+\Lambda|y-\overline{x}_1|)^{q(N-2)-2}}\frac{1}{(1+\Lambda|y-\overline{x}_i|)^{N-2}}
\nonumber\\
\leq &C\Lambda^{N-2}\int_{\mathbb{R}^N\setminus B_{\frac{\overline{d}_i}{4}}(\overline{x}_1)}
\frac{1}{|\overline{x}_1-\overline{x}_i|^2}
\frac{1}{|\overline{x}_1-\overline{x}_i|^{N-2-\varepsilon_0}}
\Bigg\{\frac{1}{(1+\Lambda|y-\overline{x}_1|)^{q(N-2)+\varepsilon_0-2}}
\nonumber\\
&+\frac{1}{(1+\Lambda|y-\overline{x}_i|)^{q(N-2)+\varepsilon_0-2}}\Bigg\}
\nonumber\\
\leq &C\int_{\mathbb{R}^N\setminus B_{\frac{\overline{d}_i}{4}}(\overline{x}_1)}
\frac{1}{|\overline{x}_1-\overline{x}_i|^{N-\varepsilon_0}}
\frac{1}{(1+\Lambda|y-\overline{x}_1|)^{q(N-2)+\varepsilon_0-2}}
\nonumber\\
&+C\frac{1}{|\overline{x}_1-\overline{x}_i|^{N-\varepsilon_0}}
\Bigg\{\int_{(\mathbb{R}^N\setminus B_{\frac{\overline{d}_i}{4}}(\overline{x}_1))\cap B_{\frac{\overline{d}_i}{4}}(\overline{x}_i)}
+\int_{(\mathbb{R}^N\setminus B_{\frac{\overline{d}_i}{4}}(\overline{x}_1))\setminus B_{\frac{\overline{d}_i}{4}}(\overline{x}_i)}\Bigg\}
\frac{1}{(1+\Lambda|y-\overline{x}_i|)^{q(N-2)+\varepsilon_0-2}}
\nonumber\\
=&O\left(\frac{1}{|\overline{x}_1-\overline{x}_i|^{N-\varepsilon_0}}\right)
+O\left(\frac{1}{|\overline{x}_1-\overline{x}_i|^{N}}\right)
=O\left(\frac{1}{|\overline{x}_1-\overline{x}_i|^{N-\varepsilon_0}}\right).
\end{align}
In conclusion, we prove the first estimate and the second estimate can be proved in the same way.
\end{proof}

\begin{lemma}\label{a2}
Suppose that $N\geq 5$ and $\tau\in (0,2)$, $y=(y_1,\cdots, y_N)$. Then there is a small $\sigma>0$, such that that,
\begin{align*}
\int_{\mathbb{R}^N}\frac{1}{|y-z|^{N-2}}W_2^{p-1}\sum_{j=1}^k\frac{1}{(1+|z-\overline{x}_j|)^{\frac{N-2}{2}+\tau}}dz
\leq C\sum_{j=1}^k\frac{1}{(1+|y-\overline{x}_j|)^{\frac{N-2}{2}+\tau+\sigma}}\ \text{ for } y_3\geq 0,
\end{align*}
and
\begin{align*}
\int_{\mathbb{R}^N}\frac{1}{|y-z|^{N-2}}W_2^{p-1}\sum_{j=1}^k\frac{1}{(1+|z-\underline{x}_j|)^{\frac{N-2}{2}+\tau}}dz
\leq C\sum_{j=1}^k\frac{1}{(1+|y-\underline{x}_j|)^{\frac{N-2}{2}+\tau+\sigma}}\ \text{ for } y_3\leq 0.
\end{align*}
\end{lemma}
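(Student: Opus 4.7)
\textbf{Plan of proof for Lemma \ref{a2}.} By the symmetry $y_3 \mapsto -y_3$, it suffices to treat the first inequality, i.e., $y_3 \ge 0$. The strategy is the standard convolution-estimate machinery used in Wei--Yan type constructions (cf.\ \cite{duanmw}, Lemmas B.1--B.5): decompose $W_2^{p-1}$ into a sum of single-bubble profiles, apply a pointwise Newton-potential bound against each product of two bubble-decays, then re-sum.

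First I would estimate $W_2^{p-1}$ pointwise. Since $W_2(z) \le C\sum_{j=1}^k\bigl[(1+|z-\overline{x}_j|)^{-(N-2)} + (1+|z-\underline{x}_j|)^{-(N-2)}\bigr]$, one shows that for $p$ in the range (\ref{p+}) there is an exponent $\alpha\in(2,(N-2)(p-1))$ (with $\alpha > N-\sigma'$ for some $\sigma'>0$, using $p>N(N-2)/[(N-2)^2-(N-2-m)]$) such that
\begin{equation*}
W_2^{p-1}(z)\;\le\; C\sum_{j=1}^k\Bigl[\frac{1}{(1+|z-\overline{x}_j|)^{\alpha}}+\frac{1}{(1+|z-\underline{x}_j|)^{\alpha}}\Bigr].
\end{equation*}
For $1<p\le 2$ this is immediate from subadditivity $(a+b)^{p-1}\le a^{p-1}+b^{p-1}$; for $p>2$ one decomposes $\mathbb{R}^N$ into the cells $\Omega_j^{\pm}$, uses Lemma~B.1 of \cite{duanmw} to bound all non-dominant bubbles in each cell by the dominant one, and then redistributes the surplus decay.

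Next I would plug this in and reduce to evaluating, for every pair $(i,j)$ and every choice of signs, integrals of the form
\begin{equation*}
\int_{\mathbb{R}^N}\frac{1}{|y-z|^{N-2}}\;\frac{1}{(1+|z-a|)^{\alpha}}\;\frac{1}{(1+|z-b|)^{\frac{N-2}{2}+\tau}}\,dz,
\end{equation*}
with $a,b\in\{\overline{x}_\ell,\underline{x}_\ell\}$. For these one invokes the pointwise Newton-potential lemma (Lemma B.2 of \cite{duanmw}): whenever the exponents are admissible, such an integral is bounded by $C(1+|y-a|)^{-\alpha_1}(1+|y-b|)^{-\beta_1}$ with $\alpha_1+\beta_1=\alpha+\tfrac{N-2}{2}+\tau-2$. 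Choose the distribution so that the $(1+|y-b|)$ factor carries exponent $\tfrac{N-2}{2}+\tau+\sigma$ for some small $\sigma>0$ (made possible by $\alpha>2$, i.e.\ by $p>(N+1)/(N-2)$), while the $(1+|y-a|)$ factor carries a strictly positive exponent. Summing the residual factors $\sum_i|\overline{x}_1-a_i|^{-\gamma}$ using Lemma~B.1 of \cite{duanmw} and the configuration $|\overline{x}_1-\overline{x}_j|\gtrsim r/k\gtrsim k^{m/(N-2-m)}$, $|\overline{x}_1-\underline{x}_1|\gtrsim rh\gtrsim k^{(N-2)/(N-2-m)-(N-3)/(N-1)}$, gives a bounded series (with room to spare, by the lower bound on $p$ in (\ref{p+})).

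Finally, to express the output purely in terms of the $\overline{x}_j$-weights when $y_3\ge 0$, I use the key geometric fact $|y-\underline{x}_j|\ge|y-\overline{x}_j|$ for $y_3\ge 0$, which lets every $(1+|y-\underline{x}_j|)^{-\frac{N-2}{2}-\tau-\sigma}$ be majorised by $(1+|y-\overline{x}_j|)^{-\frac{N-2}{2}-\tau-\sigma}$. Collecting all contributions produces the stated bound.

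\textbf{Main obstacle.} The bookkeeping in Step~1 for $p>2$ is the only genuinely delicate point: one cannot split $W_2^{p-1}$ by subadditivity and must rely on the cell decomposition to pick out a single dominant bubble per region; here the sharp form of the assumption $p>N(N-2)/[(N-2)^2-(N-2-m)]$ enters, guaranteeing both $\alpha>2$ (so the convolution gains decay) and enough slack to produce a strictly positive gain $\sigma>0$ after summing over $j$. Once this redistribution is set up, Steps~2--3 are routine applications of the potential estimates of \cite{duanmw}.
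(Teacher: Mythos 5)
Your plan matches the paper's proof in all essentials: the paper likewise dominates the bubble cluster by a single profile on each cell $\Omega_j^{\pm}$ (via Lemma B.3 of \cite{weiy} and Lemma B.1 of \cite{duanmw}), applies the Newton--potential estimate, and converts the surplus decay into separation sums, with the hypothesis $p>\frac{N(N-2)}{(N-2)^2-(N-2-m)}$ entering exactly where you place it, in the form $(N-2-\tau_1)(p-1)-\tau_1-2>0$. Two minor remarks: the paper sidesteps your $p\le 2$ versus $p>2$ case split by bounding $W_2$ itself by one dominant profile per cell and only then raising to the power $p-1$; and your parenthetical claim $\alpha>N-\sigma'$ cannot hold (since $\alpha<(N-2)(p-1)\le 4$ when $p\le\frac{N+2}{N-2}$), but it is harmless because only $\alpha>2$ is actually used.
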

\begin{proof}
By \cite[Lemma B.3]{weiy}, there exists $\tau_1>\frac{N-2-m}{N-2}$ such that for $z\in \Omega_1^+$,
\begin{align*}
\sum_{j=2}^k\frac{1}{(1+|z-\overline{x}_j|)^{N-2}}\leq \frac{C}{(1+|z-\overline{x}_1|)^{N-2-\tau_1}}.
\end{align*}
Moreover, from \cite[Lemma B.1]{duanmw}, there exists $\alpha\in (1,N-2)$ such that
\begin{align*}
\sum_{j=1}^k\frac{1}{(1+|z-\overline{x}_j|)^{N-2}}\leq \frac{C}{(1+|z-\underline{x}_1|)^{N-2-\alpha}}\frac{1}{|\overline{x}_1-\underline{x}_1|^{\alpha}}.
\end{align*}
Thus,
\begin{align*}
&\int_{\mathbb{R}^N}\frac{1}{|y-z|^{N-2}}W_2^{p-1}\sum_{j=1}^k\frac{1}{(1+|z-\overline{x}_j|)^{\frac{N-2}{2}+\tau}}dz
\nonumber\\
=&\int_{\mathbb{R}^N}\frac{1}{|y-z|^{N-2}}
\left(\sum_{j=1}^kV_{\overline{x}_j,\Lambda}+\sum_{j=1}^kV_{\underline{x}_j,\Lambda}\right)^{p-1}
\sum_{j=1}^k\frac{1}{(1+|z-\overline{x}_j|)^{\frac{N-2}{2}+\tau}}dz
\nonumber\\
\leq &\int_{\mathbb{R}^N}\frac{1}{|y-z|^{N-2}}
\left(\frac{C}{(1+|z-\overline{x}_1|)^{N-2-\tau_1}}
+\frac{C}{(1+|z-\overline{x}_1|)^{N-2-\alpha}}\frac{1}{|\overline{x}_1-\underline{x}_1|^{\alpha}}\right)^{p-1}
\nonumber\\
&\times\sum_{j=1}^k\frac{1}{(1+|z-\overline{x}_j|)^{\frac{N-2}{2}+\tau}}dz.
\end{align*}
Since for $(N-2-\tau_1)(p-1)+\frac{N-2}{2}+\tau-\tau_1-2\in (0,N-2)$ and
$(N-2-\tau_1)(p-1)-\tau_1-2>0$, that is
$\tau_1\in [\frac{N-2-m}{N-2},N-2-\frac{N}{p}]$ with $p$ satisfies
\begin{align*}
p>\frac{N(N-2)}{(N-2)^2-(N-2-m)},
\end{align*}
we have that for $z\in \Omega_1^+$,
\begin{align*}
&\frac{1}{(1+|z-\overline{x}_1|)^{(N-2-\tau_1)(p-1)}}\sum_{j=1}^k\frac{1}{(1+|z-\overline{x}_j|)^{\frac{N-2}{2}+\tau}}
\nonumber\\
\leq &\frac{1}{(1+|z-\overline{x}_1|)^{(N-2-\tau_1)(p-1)+\frac{N-2}{2}+\tau}}
+\frac{1}{(1+|z-\overline{x}_1|)^{(N-2-\tau_1)(p-1)+\frac{N-2}{2}+\tau-\tau_1}}
\sum_{j=2}^k\frac{1}{|\overline{x}_1-\overline{x}_j|^{\tau_1}}.
\end{align*}
Therefore, there exists $\theta>0$, such that
\begin{align*}
&\int_{\Omega_1^+}\frac{1}{|y-z|^{N-2}}
\frac{1}{(1+|z-\overline{x}_1|)^{(N-2-\tau_1)(p-1)}}\sum_{j=1}^k\frac{1}{(1+|z-\overline{x}_j|)^{\frac{N-2}{2}+\tau}}
\nonumber\\
\leq &\int_{\Omega_1^+}\frac{1}{|y-z|^{N-2}}
\frac{C}{(1+|z-\overline{x}_1|)^{(N-2-\tau_1)(p-1)+\frac{N-2}{2}+\tau-\tau_1}}
\nonumber\\
\leq &\frac{C}{(1+|y-\overline{x}_1|)^{(N-2-\tau_1)(p-1)+\frac{N-2}{2}+\tau-\tau_1}}
\nonumber\\
\leq &\frac{C}{(1+|y-\overline{x}_1|)^{\frac{N-2}{2}+\tau+\theta}}.
\end{align*}
Thus,
\begin{align*}
&\int_{\mathbb{R}^N}\frac{1}{|y-z|^{N-2}}
\frac{1}{(1+|z-\overline{x}_1|)^{(N-2-\tau_1)(p-1)}}\sum_{j=1}^k\frac{1}{(1+|z-\overline{x}_j|)^{\frac{N-2}{2}+\tau}}
\nonumber\\
=&2\sum_{i}^k\int_{\Omega_i^+}\frac{1}{|y-z|^{N-2}}
\frac{1}{(1+|z-\overline{x}_i|)^{(N-2-\tau_1)(p-1)}}\sum_{j=1,j\neq i}^k\frac{1}{(1+|z-\overline{x}_j|)^{\frac{N-2}{2}+\tau}}
\nonumber\\
\leq &C\sum_{j=1}^k\frac{1}{(1+|y-\overline{x}_j|)^{\frac{N-2}{2}+\tau+\sigma}}.
\end{align*}
Since $(N-2-\alpha)(p-1)-\alpha-2>0$, it holds
\begin{align*}
&\left(\frac{C}{(1+|z-\underline{x}_1|)^{N-2-\alpha}}\frac{1}{|\overline{x}_1-\underline{x}_1|^{\alpha}}\right)^{p-1}
\sum_{j=1}^k\frac{1}{(1+|z-\overline{x}_j|)^{\frac{N-2}{2}+\tau}}
\nonumber\\
\leq &\left(\frac{C}{(1+|z-\overline{x}_1|)^{(N-2-\alpha)(p-1)+\frac{N-2}{2}+\tau}}
+\frac{C}{(1+|z-\overline{x}_1|)^{(N-2-\alpha)(p-1)-\alpha+\frac{N-2}{2}+\tau}}
\sum_{j=2}^k\frac{1}{|\overline{x}_1-\overline{x}_j|^{\alpha}}
\right)
\nonumber\\
&\times \frac{1}{|\overline{x}_1-\underline{x}_1|^{\alpha(p-1)}}.
\end{align*}
Similarly, we get
\begin{align*}
&\int_{\mathbb{R}^N}\frac{1}{|y-z|^{N-2}}
\left(\frac{C}{(1+|z-\underline{x}_1|)^{N-2-\alpha}}\frac{1}{|\overline{x}_1-\underline{x}_1|^{\alpha}}\right)^{p-1}
\sum_{j=1}^k\frac{1}{(1+|z-\overline{x}_j|)^{\frac{N-2}{2}+\tau}}
\nonumber\\
\leq &\frac{C}{(1+|y-\overline{x}_1|)^{\frac{N-2}{2}+\tau+\theta}}.
\end{align*}
Therefore, we have proved the result for $y_3\geq 0$ and the case of $y_3\leq 0$ can be handled in a same way.
\end{proof}

\begin{proposition}\label{iw}
We have
\begin{align*}
I(W_1,W_2)
=&kA_1-\frac{k}{\Lambda^{N-2}}\left[\frac{B_4k^{N-2}}{(r\sqrt{1-h^2})^{N-2}}
+\frac{B_5k}{r^{N-2}h^{N-3}\sqrt{1-h^2}}\right]
\nonumber\\
&+k\left[\frac{A_2}{\mu^{m_2}\Lambda^{m_2}}+\frac{\bar{A}_2}{\mu^{m_1}\Lambda^{m_1}}\right]
+k\left[\frac{A_3}{\mu^{m_2}\Lambda^{m_2-2}}+\frac{\bar{A}_3}{\mu^{m_1}\Lambda^{m_1-2}}\right](\mu-r)^2
\nonumber\\
&+k\frac{C(r,\Lambda)}{\mu^m}(\mu-r)^{\theta+2}+k\frac{C(r,\Lambda)}{\mu^{m+\theta}}
+kO\left(\frac{1}{k^{\frac{m(N-2)}{N-2-m}+\frac{2(N-3)}{N-1}+\theta}}\right),
\end{align*}
where $A_1$, $B_4$, $B_5$, $A_2$, $\bar{A}_2$, $A_3$, $\bar{A}_3$, $m$, $\theta$ are defined in (\ref{ca1}),(\ref{cb4}), (\ref{ca2}), (\ref{ca3}), (\ref{cm}).
\end{proposition}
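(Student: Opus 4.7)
The plan is to split $I(W_1,W_2) = \mathcal{Q} - \mathcal{P}_1 - \mathcal{P}_2$, where $\mathcal{Q} = \int_{\mathbb{R}^N} \nabla W_1 \nabla W_2$, and $\mathcal{P}_i$ is the corresponding potential integral, and to expand each block. Using $-\Delta U_{\xi,\Lambda} = V_{\xi,\Lambda}^p$ and integration by parts,
$$\mathcal{Q} \;=\; \sum_{j=1}^k \int_{\mathbb{R}^N} V_{\overline{x}_j,\Lambda}^p W_2 \;+\; \sum_{j=1}^k \int_{\mathbb{R}^N} V_{\underline{x}_j,\Lambda}^p W_2,$$
which, after expanding $W_2$, decomposes into diagonal self-energies (each equal to $\int V_{0,1}^{p+1}$) and off-diagonal cross terms controlled by Lemma \ref{a1}. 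Writing $K_i(|y|/\mu) = 1 + (K_i(|y|/\mu) - 1)$ in $\mathcal{P}_1, \mathcal{P}_2$, the piece with $K_i \equiv 1$ combines with $\mathcal{Q}$ to give the unperturbed Lane-Emden energy of a sum of $2k$ standard bubbles, which is a standard computation of the type performed in \cite{duanmw,weiy,guolp} and yields the leading constant $kA_1$ together with the pairwise interaction collected in the bracket.

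To extract the geometric bracket, I would use that $\overline{x}_j$ sits on a circle of radius $r\sqrt{1-h^2}$ with $k$ equally spaced points, so $|\overline{x}_1 - \overline{x}_j| = 2r\sqrt{1-h^2}\sin(\pi(j-1)/k)$. Summing via Lemma \ref{a1} gives
$$\sum_{j=2}^k \frac{B_0}{\Lambda^{N-2}|\overline{x}_1-\overline{x}_j|^{N-2}} \;=\; \frac{B_4\, k^{N-2}}{\Lambda^{N-2}(r\sqrt{1-h^2})^{N-2}} + o(\cdot),$$
where $B_4$ encodes the lattice sum $2^{-(N-2)}\sum_{j\geq 2}|\sin(\pi(j-1)/k)|^{-(N-2)}$ (up to the needed $k^{N-2}$ factor). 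The top-bottom sum uses $|\overline{x}_1 - \underline{x}_j|^2 = (2rh)^2 + 4r^2(1-h^2)\sin^2(\pi(j-1)/k)$; the dominant contribution as $h \to 0$ comes from $j$'s with $\sin(\pi(j-1)/k) \lesssim h$, producing the anisotropic factor $h^{-(N-3)}$ and the constant $B_5$.

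For the perturbative pieces I would rescale $z = \Lambda(y - \overline{x}_j)$ in $\int(K_i(|y|/\mu) - 1)V_{\overline{x}_j,\Lambda}^{p+1}$ and analogously in the $U^{q+1}$ term, and Taylor expand $|y|/\mu - r_0$ around $|\overline{x}_j|/\mu - r_0 = (r - \mu r_0)/\mu$. Writing $r_0 = 1$ (this can be arranged) the assumption $(K_i)$ gives
$$K_i(|y|/\mu) - 1 \;=\; -c_i\bigl|\,|y|/\mu - r_0\bigr|^{m_i} + O\bigl(|\,|y|/\mu - r_0|^{m_i+\theta_i}\bigr),$$
and integrating against $V_{\overline{x}_j,\Lambda}^{p+1}$ yields the leading block $A_2/(\mu^{m_2}\Lambda^{m_2}) + \bar A_2/(\mu^{m_1}\Lambda^{m_1})$, while the second-order term in the expansion of $|\,|y|/\mu-r_0|^{m_i}$ about $r/\mu$ yields the $(\mu - r)^2$ coefficient $A_3/(\mu^{m_2}\Lambda^{m_2-2}) + \bar A_3/(\mu^{m_1}\Lambda^{m_1-2})$. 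The $O(\cdots)^{m_i+\theta_i}$ remainder of $(K_i)$ produces the $C(r,\Lambda)/\mu^{m+\theta}$ and $C(r,\Lambda)(\mu - r)^{\theta+2}/\mu^m$ terms. Splitting the domain into $||y|-\mu r_0|\le\sqrt{\mu}$ and its complement, as in the proof of Lemma \ref{lpL}, confines the lower-order errors to the final remainder $kO(k^{-m(N-2)/(N-2-m) - 2(N-3)/(N-1) - \theta})$.

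The main obstacle will be twofold. First, pinning down the exact constants $B_4, B_5$ from the two lattice sums and showing that the sharp geometric factors $(1-h^2)^{-(N-2)/2}$ and $h^{-(N-3)}(1-h^2)^{-1/2}$ emerge as stated; this requires separating the $j \sim k h$ regime where the vertical and horizontal separations are comparable. Second, after the bubble-bubble and $K_i$-correction expansions have been performed individually, the bookkeeping of which off-diagonal cross terms (top-top, top-bottom, bubble-potential) survive, which cancel between $\mathcal{Q}$ and $\mathcal{P}_1 + \mathcal{P}_2$, and which are absorbed into the $kO(\cdot)$ remainder, all uniformly on $\wp_k$, is the delicate point. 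Once these are handled, summing over the $k$ bubbles in $\Omega_1^+ \cup \Omega_1^-$ and using the $\Z_k$-symmetry yields the stated expansion.
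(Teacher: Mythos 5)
Your proposal follows essentially the same route as the paper's proof: the same decomposition $I=I_1-I_2$ with $I_1$ reduced via $-\Delta U_{\xi,\Lambda}=V_{\xi,\Lambda}^p$ to self-energies plus cross terms handled by Lemma \ref{a1}, the same splitting $K_i=1+(K_i-1)$ with the perturbation treated by a Taylor expansion of $\bigl||y|-\mu\bigr|^{m_i}$ on the region $\bigl||y|/\mu-1\bigr|\leq\delta$ and decay estimates on its complement, and the same appeal to the lattice-sum asymptotics of \cite{duanmw} to produce the $B_4$ and $B_5$ brackets. The obstacles you flag (the $j\sim kh$ regime for $B_5$ and the bookkeeping of which cross terms are absorbed into the remainder) are exactly the points the paper's proof spends its effort on, so your plan is consistent with the actual argument.
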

\begin{proof}
Since
\begin{align*}
I(W_1,W_2)=&\int_{\mathbb{R}^N}\nabla W_1\nabla W_2
-\left(\frac{1}{p+1}\int_{\mathbb{R}^N}K_1\left(\frac{|y|}{\mu}\right)|W_2|^{p+1}
+\frac{1}{q+1}\int_{\mathbb{R}^N}K_2\left(\frac{|y|}{\mu}\right)|W_1|^{q+1}\right)
\nonumber\\
:=&I_1-I_2,
\end{align*}
we can estimate the integral in two parts. For the first part $I_1$, by using the symmetry, we compute that
\begin{align}\label{i1}
I_1=&\int_{\mathbb{R}^N}\nabla W_1\nabla W_2
\nonumber\\
=&\sum_{j=1}^k\sum_{i=1}^k\int_{\mathbb{R}^N}-\Delta\left(U_{\overline{x}_j,\Lambda}+U_{\underline{x}_j,\Lambda}\right)
\left(V_{\overline{x}_j,\Lambda}+V_{\underline{x}_j,\Lambda}\right)
\nonumber\\
=&\sum_{j=1}^k\sum_{i=1}^k\int_{\mathbb{R}^N}\left(V_{\overline{x}_j,\Lambda}^p+V_{\underline{x}_j,\Lambda}^p\right)
\left(V_{\overline{x}_j,\Lambda}+V_{\underline{x}_j,\Lambda}\right)
\nonumber\\
=&2k\sum_{j=1}^k\int_{\mathbb{R}^N}\left(V_{\overline{x}_1,\Lambda}^pV_{\overline{x}_j,\Lambda}
+V_{\underline{x}_1,\Lambda}^pV_{\overline{x}_j,\Lambda}\right)
\nonumber\\
=&2k\int_{\mathbb{R}^N}\left(V_{\overline{x}_1,\Lambda}^{p+1}
+\sum_{j=2}^kV_{\overline{x}_1,\Lambda}^pV_{\overline{x}_j,\Lambda}\right)
+2k\int_{\mathbb{R}^N}\sum_{j=1}^kV_{\underline{x}_1,\Lambda}^pV_{\overline{x}_j,\Lambda}
\nonumber\\
=&2k\int_{\mathbb{R}^N}V_{0,1}^{p+1}
+2k\int_{\mathbb{R}^N}V_{\overline{x}_1,\Lambda}^p\left(\sum_{j=2}^kV_{\overline{x}_j,\Lambda}
+\sum_{j=1}^kV_{\underline{x}_j,\Lambda}\right).
\end{align}
For the second term $I_2$, define $I_2:=I_{21}+I_{22}$, we shall estimate $I_{21}$ and $I_{22}$.
If $q+1\leq 3$, then
\begin{align*}
I_{22}=&\frac{1}{q+1}\int_{\mathbb{R}^N}K_2\left(\frac{|y|}{\mu}\right)|W_1|^{q+1}
\nonumber\\
=&\frac{1}{q+1}\int_{\mathbb{R}^N}K_2\left(\frac{|y|}{\mu}\right)
\left|\sum_{j=1}^kU_{\overline{x}_j,\Lambda}+\sum_{j=1}^kU_{\underline{x}_j,\Lambda}\right|^{q+1}
\nonumber\\
=&\frac{2k}{q+1}\int_{\Omega_1^+}K_2\left(\frac{|y|}{\mu}\right)\Bigg\{U_{\overline{x}_1,\Lambda}^{q+1}
+(q+1)U_{\overline{x}_1,\Lambda}^{q}\left(\sum_{j=2}^kU_{\overline{x}_j,\Lambda}
+\sum_{j=1}^kU_{\underline{x}_j,\Lambda}\right)
\nonumber\\
&+O\left(U_{\overline{x}_1,\Lambda}^{\frac{q+1}{2}}\left(\sum_{j=2}^kU_{\overline{x}_j,\Lambda}
+\sum_{j=1}^kU_{\underline{x}_j,\Lambda}\right)^{\frac{q+1}{2}}\right)
\Bigg\}.
\end{align*}
If $q+1>3$, then
\begin{align*}
I_{22}=&\frac{1}{q+1}\int_{\mathbb{R}^N}K_2\left(\frac{|y|}{\mu}\right)|W_1|^{q+1}
\nonumber\\
=&\frac{1}{q+1}\int_{\mathbb{R}^N}K_2\left(\frac{|y|}{\mu}\right)
\left|\sum_{j=1}^kU_{\overline{x}_j,\Lambda}+\sum_{j=1}^kU_{\underline{x}_j,\Lambda}\right|^{q+1}
\nonumber\\
=&\frac{2k}{q+1}\int_{\Omega_1^+}K_2\left(\frac{|y|}{\mu}\right)\Bigg\{U_{\overline{x}_1,\Lambda}^{q+1}
+(q+1)U_{\overline{x}_1,\Lambda}^{q}\left(\sum_{j=2}^kU_{\overline{x}_j,\Lambda}
+\sum_{j=1}^kU_{\underline{x}_j,\Lambda}\right)
\nonumber\\
&+O\left(U_{\overline{x}_1,\Lambda}^{\frac{q+1}{2}}\left(\sum_{j=2}^kU_{\overline{x}_j,\Lambda}
+\sum_{j=1}^kU_{\underline{x}_j,\Lambda}\right)^{\frac{q+1}{2}}
+U_{\overline{x}_1,\Lambda}^{q-1}\left(\sum_{j=2}^kU_{\overline{x}_j,\Lambda}
+\sum_{j=1}^kU_{\underline{x}_j,\Lambda}\right)^{2}\right)
\Bigg\}.
\end{align*}
By \cite[Lemma B.1]{duanmw}, for $y\in \Omega_1^+$, choose $\alpha=\frac{2}{q+1}\varepsilon_0$,
\begin{align*}
\left(\sum_{j=2}^kU_{\overline{x}_j,\Lambda}
+\sum_{j=1}^kU_{\underline{x}_j,\Lambda}\right)
\leq \frac{C}{(1+|y-\overline{x}_1|)^{\alpha}}\left(\frac{k}{r}\right)^{N-2-\alpha}
=\frac{C}{(1+|y-\overline{x}_1|)^{\frac{2}{q+1}\varepsilon_0}}\left(\frac{k}{r}\right)^{N-2-\frac{2}{q+1}\varepsilon_0},
\end{align*}
where $\varepsilon_0$ can be small enough.

Then by $\frac{q+1}{2}(N-2)\geq N$,
\begin{align*}
&\int_{\Omega_1^+}K_2\left(\frac{|y|}{\mu}\right)U_{\overline{x}_1,\Lambda}^{\frac{q+1}{2}}\left(\sum_{j=2}^kU_{\overline{x}_j,\Lambda}
+\sum_{j=1}^kU_{\underline{x}_j,\Lambda}\right)^{\frac{q+1}{2}}
\nonumber\\
=&O\left(\int_{\Omega_1^+}K_2\left(\frac{|y|}{\mu}\right)U_{\overline{x}_1,\Lambda}^{\frac{q+1}{2}}
\frac{1}{(1+|y-\overline{x}_1|)^{\varepsilon_0}}\left(\frac{k}{r}\right)^{\frac{q+1}{2}(N-2)-\varepsilon_0}
\right)
\nonumber\\
=&O\left(\int_{\Omega_1^+}K_2\left(\frac{|y|}{\mu}\right)
\frac{1}{(1+|y-\overline{x}_1|)^{\frac{q+1}{2}(N-2)+\varepsilon_0}}\left(\frac{k}{r}\right)^{\frac{q+1}{2}(N-2)-\varepsilon_0}
\right)
=O\left(\left(\frac{k}{r}\right)^{N-\varepsilon_0}\right).
\end{align*}
If $q+1>3$, by
\begin{align*}
\left(\sum_{j=2}^kU_{\overline{x}_j,\Lambda}
+\sum_{j=1}^kU_{\underline{x}_j,\Lambda}\right)
\leq \frac{C}{(1+|y-\overline{x}_1|)^{\frac{1}{2}\varepsilon_0}}\left(\frac{k}{r}\right)^{N-2-\frac{1}{2}\varepsilon_0},
\end{align*}
we have
\begin{align*}
&\int_{\Omega_1^+}K_2\left(\frac{|y|}{\mu}\right)U_{\overline{x}_1,\Lambda}^{q-1}
\left(\sum_{j=2}^kU_{\overline{x}_j,\Lambda}
+\sum_{j=1}^kU_{\underline{x}_j,\Lambda}\right)^{2}
\nonumber\\
=&O\left(\int_{\Omega_1^+}K_2\left(\frac{|y|}{\mu}\right)U_{\overline{x}_1,\Lambda}^{q-1}
\frac{1}{(1+|y-\overline{x}_1|)^{\varepsilon_0}}\left(\frac{k}{r}\right)^{2(N-2)-\varepsilon_0}
\right)
=O\left(\left(\frac{k}{r}\right)^{N-\varepsilon_0}\right).
\end{align*}
Moreover,
\begin{align*}
\int_{\Omega_1^+}K_2\left(\frac{|y|}{\mu}\right)U_{\overline{x}_1,\Lambda}^{q+1}
=&\int_{\Omega_1^+}U_{\overline{x}_1,\Lambda}^{q+1}
+\int_{\Omega_1^+}\left(K_2\left(\frac{|y|}{\mu}\right)-1\right)U_{\overline{x}_1,\Lambda}^{q+1}
\nonumber\\
=&\int_{\mathbb{R}^N}U_{0,1}^{q+1}
+\int_{\Omega_1^+}\left(K_2\left(\frac{|y|}{\mu}\right)-1\right)U_{\overline{x}_1,\Lambda}^{q+1}
+O\left(\int_{\mathbb{R}^N\setminus \Omega_1^+}U_{\overline{x}_1,\Lambda}^{q+1}\right)
\nonumber\\
=&\int_{\mathbb{R}^N}U_{0,1}^{q+1}
+\int_{\Omega_1^+}\left(K_2\left(\frac{|y|}{\mu}\right)-1\right)U_{\overline{x}_1,\Lambda}^{q+1}
+O\left(\left(\frac{k}{\mu}\right)^{N}\right)
\nonumber\\
=&\int_{\mathbb{R}^N}U_{0,1}^{q+1}
+\left\{\int_{\Omega_1^+\cap \{y:|\frac{|y|}{\mu}-1|\geq \delta\}}
+\int_{\Omega_1^+\cap \{y:|\frac{|y|}{\mu}-1|\leq \delta\}}\right\}
\left(K_2\left(\frac{|y|}{\mu}\right)-1\right)U_{\overline{x}_1,\Lambda}^{q+1}
\nonumber\\
&+O\left(\left(\frac{k}{\mu}\right)^{N}\right).
\end{align*}
When $|\frac{|y|}{\mu}-1|\geq \delta$,
$|y-\overline{x}_1|\geq ||y|-|\overline{x}_1||\geq ||y|-\mu|-|\mu-|\overline{x}_1||\geq \frac{1}{2}\mu\delta$, then
\begin{align*}
\int_{\Omega_1^+\cap \{y:|\frac{|y|}{\mu}-1|\geq \delta\}}\left(K_2\left(\frac{|y|}{\mu}\right)-1\right)U_{\overline{x}_1,\Lambda}^{q+1}
=O\left(\frac{1}{\mu^{N-\varepsilon_0}}\right).
\end{align*}
When $|\frac{|y|}{\mu}-1|\leq \delta$, using the condition $(K_2)$,
\begin{align*}
&\int_{\Omega_1^+\cap \{y:|\frac{|y|}{\mu}-1|\leq \delta\}}\left(K_2\left(\frac{|y|}{\mu}\right)-1\right)U_{\overline{x}_1,\Lambda}^{q+1}
\nonumber\\
=&-\frac{c_2}{\mu^{m_2}}\int_{\Omega_1^+\cap \{y:|\frac{|y|}{\mu}-1|\leq \delta\}}
\left||y|-\mu\right|^{m_2}U_{\overline{x}_1,\Lambda}^{q+1}
+O\left(\frac{1}{\mu^{m_2+\theta_2}}\int_{\Omega_1^+\cap \{y:|\frac{|y|}{\mu}-1|\leq \delta\}}
\left||y|-\mu\right|^{m_2+\theta_2}U_{\overline{x}_1,\Lambda}^{q+1}
\right)
\nonumber\\
=&-\frac{c_2}{\mu^{m_2}}\int_{\mathbb{R}^N}
\left||y|-\mu\right|^{m_2}U_{\overline{x}_1,\Lambda}^{q+1}
+O\left(\int_{\mathbb{R}^N\setminus B_{\frac{\mu}{k}}(\overline{x}_1)}\left[\left(\frac{|y|}{\mu}\right)^{m_2}+1\right]
U_{\overline{x}_1,\Lambda}^{q+1}
\right)
\nonumber\\
&+O\left(\frac{1}{\mu^{m_2+\theta_2}}\int_{\Omega_1^+\cap \{y:|\frac{|y|}{\mu}-1|\leq \delta\}}
\left||y|-\mu\right|^{m_2+\theta_2}U_{\overline{x}_1,\Lambda}^{q+1}
\right)
\nonumber\\
=&-\frac{c_2}{\mu^{m_2}}\int_{\mathbb{R}^N}
\left||y+\overline{x}_1|-\mu\right|^{m_2}U_{0,\Lambda}^{q+1}
+O\left(\left(\frac{k}{\mu}\right)^{N-\varepsilon_0}\right)
\nonumber\\
&+O\left(\frac{1}{\mu^{m_2+\theta_2}}\int_{\Omega_1^+\cap \{y:|\frac{|y|}{\mu}-1|\leq \delta\}}
\left||y|-\mu\right|^{m_2+\theta_2}U_{\overline{x}_1,\Lambda}^{q+1}
\right)
\nonumber\\
=&-\frac{c_2}{\mu^{m_2}}\left[\frac{1}{\Lambda^{m_2}}\int_{\mathbb{R}^N}|y_1|^{m_2}U_{0,1}^{q+1}
+\frac{m_2(m_2-1)}{2\Lambda^{m_2-2}}\int_{\mathbb{R}^N}|y_1|^{m_2-2}U_{0,1}^{q+1}(\mu-r)^2
+C(r,\Lambda)(\mu-r)^{2+\theta_2}
\right]
\nonumber\\
&+O\left(\left(\frac{k}{\mu}\right)^{N-\varepsilon_0}\right)
+O\left(\frac{1}{\mu^{m_2+\theta_2}}\int_{\Omega_1^+\cap \{y:|\frac{|y|}{\mu}-1|\leq \delta\}}
\left||y|-\mu\right|^{m_2+\theta_2}U_{\overline{x}_1,\Lambda}^{q+1}
\right),
\end{align*}
where
\begin{align*}
\frac{1}{\mu^{m_2+\theta_2}}\int_{\Omega_1^+\cap \{y:|\frac{|y|}{\mu}-1|\leq \delta\}}
\left||y|-\mu\right|^{m_2+\theta_2}U_{\overline{x}_1,\Lambda}^{q+1}
=&\frac{1}{\mu^{m_2+\theta_2}}\int_{\mathbb{R}^N}
\left||y|-\mu\right|^{m_2+\theta_2}U_{\overline{x}_1,\Lambda}^{q+1}
+O\left(\left(\frac{k}{\mu}\right)^{N-\varepsilon_0}\right)
\nonumber\\
=&\frac{C(r,\Lambda)}{\mu^{m_2+\theta_2}}+O\left(\left(\frac{k}{\mu}\right)^{N-\varepsilon_0}\right).
\end{align*}
Here $C(r,\Lambda)$ denote functions which are independent of $h$ and can be absorbed in $O(1)$.

Consequently,
\begin{align*}
\int_{\Omega_1^+}K_2\left(\frac{|y|}{\mu}\right)U_{\overline{x}_1,\Lambda}^{q+1}
=&\int_{\mathbb{R}^N}U_{0,1}^{q+1}
-\frac{c_2}{\mu^{m_2}\Lambda^{m_2}}\int_{\mathbb{R}^N}|y_1|^{m_2}U_{0,1}^{q+1}
\nonumber\\
&-\frac{1}{2}m_2(m_2-1)\frac{c_2}{\mu^{m_2}\Lambda^{m_2-2}}\int_{\mathbb{R}^N}|y_1|^{m_2-2}U_{0,1}^{q+1}(\mu-r)^2
\nonumber\\
&+\frac{C(r,\Lambda)}{\mu^{m_2}}(\mu-r)^{2+\theta_2}+\frac{C(r,\Lambda)}{\mu^{m_2+\theta_2}}
+O\left(\left(\frac{k}{\mu}\right)^{N-\varepsilon_0}\right).
\end{align*}
It holds
\begin{align*}
&(q+1)\int_{\Omega_1^+}K_2\left(\frac{|y|}{\mu}\right)U_{\overline{x}_1,\Lambda}^{q}\left(\sum_{j=2}^kU_{\overline{x}_j,\Lambda}
+\sum_{j=1}^kU_{\underline{x}_j,\Lambda}\right)
\nonumber\\
=&(q+1)\int_{\Omega_1^+}U_{\overline{x}_1,\Lambda}^{q}\left(\sum_{j=2}^kU_{\overline{x}_j,\Lambda}
+\sum_{j=1}^kU_{\underline{x}_j,\Lambda}\right)
\nonumber\\
&+(q+1)\int_{\Omega_1^+}\left(K_2\left(\frac{|y|}{\mu}\right)-1\right)U_{\overline{x}_1,\Lambda}^{q}\left(\sum_{j=2}^kU_{\overline{x}_j,\Lambda}
+\sum_{j=1}^kU_{\underline{x}_j,\Lambda}\right)
\nonumber\\
=&(q+1)\int_{\mathbb{R}^N}U_{\overline{x}_1,\Lambda}^{q}\left(\sum_{j=2}^kU_{\overline{x}_j,\Lambda}
+\sum_{j=1}^kU_{\underline{x}_j,\Lambda}\right)
-(q+1)\int_{\mathbb{R}^N\setminus \Omega_1^+}U_{\overline{x}_1,\Lambda}^{q}\left(\sum_{j=2}^kU_{\overline{x}_j,\Lambda}
+\sum_{j=1}^kU_{\underline{x}_j,\Lambda}\right)
\nonumber\\
&+(q+1)\int_{\Omega_1^+}\left(K_2\left(\frac{|y|}{\mu}\right)-1\right)U_{\overline{x}_1,\Lambda}^{q}\left(\sum_{j=2}^kU_{\overline{x}_j,\Lambda}
+\sum_{j=1}^kU_{\underline{x}_j,\Lambda}\right),
\end{align*}
then we can estimate the integral in three parts. For the second part in the above equality, let $\bar{d}_j=|\overline{x}_1-\overline{x}_j|$, $j=2,\cdots, k$ and
$\bar{d}_2=|\overline{x}_1-\overline{x}_2|=2r\sqrt{1-h^2}\sin\frac{\pi}{k}=O\left(\frac{r}{k}\right)$, by \cite[Lemma B.3]{duanmw}, we compute that
\begin{align*}
\sum_{j=2}^k\int_{\mathbb{R}^N\setminus \Omega_1^+}U_{\overline{x}_1,\Lambda}^{q}U_{\overline{x}_j,\Lambda}
=&\sum_{j=2}^k\left[\int_{(\mathbb{R}^N\setminus \Omega_1^+)\cap B_{\frac{\bar{d}_j}{2}}(\overline{x}_1)}
+\int_{(\mathbb{R}^N\setminus \Omega_1^+)\setminus B_{\frac{\bar{d}_j}{2}}(\overline{x}_1)}
\right]
U_{\overline{x}_1,\Lambda}^{q}U_{\overline{x}_j,\Lambda}
\nonumber\\
=&\sum_{j=2}^k\int_{(\mathbb{R}^N\setminus \Omega_1^+)\cap B_{\frac{\bar{d}_j}{2}}(\overline{x}_1)}U_{\overline{x}_1,\Lambda}^{q}U_{\overline{x}_j,\Lambda}
+O\left(\sum_{j=2}^k\frac{1}{|\overline{x}_1-\overline{x}_j|^{N-\varepsilon_0}}\right)
\nonumber\\
\leq &C\sum_{j=2}^k\int_{B_{\frac{\bar{d}_j}{2}}(\overline{x}_1)\setminus B_{\frac{\bar{d}_2}{2}}(\overline{x}_1)}U_{\overline{x}_1,\Lambda}^{q}U_{\overline{x}_j,\Lambda}
+O\left(\sum_{j=2}^k\frac{1}{|\overline{x}_1-\overline{x}_j|^{N-\varepsilon_0}}\right)
\nonumber\\
\leq &C\sum_{j=2}^k\frac{1}{|\overline{x}_1-\overline{x}_j|^{N-2}}\int_{B_{\frac{\Lambda\bar{d}_j}{2}}(\overline{x}_1)\setminus B_{\frac{\Lambda\bar{d}_2}{2}}(\overline{x}_1)}\frac{1}{(1+|z|)^{(N-2)q}}
\nonumber\\
&+O\left(\sum_{j=2}^k\frac{1}{|\overline{x}_1-\overline{x}_j|^{N-\varepsilon_0}}\right)
\nonumber\\
\leq &C\sum_{j=2}^k\frac{1}{|\overline{x}_1-\overline{x}_j|^{N-2}}\bar{d}_2^{-2}
+O\left(\sum_{j=2}^k\frac{1}{|\overline{x}_1-\overline{x}_j|^{N-\varepsilon_0}}\right)
\nonumber\\
=&\sum_{j=2}^k\frac{1}{|\overline{x}_1-\overline{x}_j|^{N-2}}O\left(\left(\frac{k}{r}\right)^2\right)
+O\left(\sum_{j=2}^k\frac{1}{|\overline{x}_1-\overline{x}_j|^{N-\varepsilon_0}}\right)
\nonumber\\
=&O\left(\left(\frac{k}{r}\right)^{N-\varepsilon_0}\right).
\end{align*}
Similarly, we get
\begin{align*}
\sum_{j=1}^k\int_{\mathbb{R}^N\setminus \Omega_1^+}U_{\overline{x}_1,\Lambda}^{q}U_{\underline{x}_j,\Lambda}
=O\left(\left(\frac{k}{r}\right)^{N}\right).
\end{align*}
In conclusion, The second part we get is as follows
\begin{align*}
(q+1)\int_{\mathbb{R}^N\setminus \Omega_1^+}U_{\overline{x}_1,\Lambda}^{q}\left(\sum_{j=2}^kU_{\overline{x}_j,\Lambda}
+\sum_{j=1}^kU_{\underline{x}_j,\Lambda}\right)
=O\left(\left(\frac{k}{r}\right)^{N-\varepsilon_0}\right).
\end{align*}
As to the third part, we have
\begin{align*}
&(q+1)\int_{\Omega_1^+}\left(K_2\left(\frac{|y|}{\mu}\right)-1\right)U_{\overline{x}_1,\Lambda}^{q}\left(\sum_{j=2}^kU_{\overline{x}_j,\Lambda}
+\sum_{j=1}^kU_{\underline{x}_j,\Lambda}\right)
\nonumber\\
=&(q+1)\left\{\int_{\Omega_1^+\cap  \left\{y: \left|\frac{|y|}{\mu}-1\right|\geq \delta\right\}}
+\int_{\Omega_1^+\cap  \left\{y: \left|\frac{|y|}{\mu}-1\right|\leq \delta\right\}}
\right\}
\left(K_2\left(\frac{|y|}{\mu}\right)-1\right)U_{\overline{x}_1,\Lambda}^{q}\left(\sum_{j=2}^kU_{\overline{x}_j,\Lambda}
+\sum_{j=1}^kU_{\underline{x}_j,\Lambda}\right).
\end{align*}
For $\left|\frac{|y|}{\mu}-1\right|\geq \delta$ and $y\in \Omega_1^+$, we get $|y-\overline{x}_1|\geq ||y|-\mu|-|\mu-|\overline{x}_1||\geq \frac{1}{2}\delta \mu$ and
\begin{align*}
\left(\sum_{j=2}^kU_{\overline{x}_j,\Lambda}
+\sum_{j=1}^kU_{\underline{x}_j,\Lambda}\right)
\leq C \left(\frac{k}{r}\right)^{\alpha}\frac{1}{|y-\overline{x}_1|^{N-2-\alpha}},
\end{align*}
where $\alpha\in \left(\frac{N-2-m}{N-2}, \frac{N-2}{2}\right)$.
Then
\begin{align*}
&\int_{\Omega_1^+\cap  \left\{y: \left|\frac{|y|}{\mu}-1\right|\geq \delta\right\}}
\left(K_2\left(\frac{|y|}{\mu}\right)-1\right)U_{\overline{x}_1,\Lambda}^{q}\left(\sum_{j=2}^kU_{\overline{x}_j,\Lambda}
+\sum_{j=1}^kU_{\underline{x}_j,\Lambda}\right)
\nonumber\\
\leq &C\int_{\Omega_1^+\cap  \left\{y: \left|\frac{|y|}{\mu}-1\right|\geq \delta\right\}}
\left(\frac{k}{r}\right)^{\alpha}
\frac{1}{|y-\overline{x}_1|^{q(N-2)+N-2-\alpha}}
\nonumber\\
\leq &C\left(\frac{k}{r}\right)^{\alpha}\left(\frac{1}{\mu}\right)^{N-\alpha-\varepsilon_0}
\int_{\Omega_1^+\cap  \left\{y: \left|\frac{|y|}{\mu}-1\right|\geq \delta\right\}}
\frac{1}{|y-\overline{x}_1|^{q(N-2)-2+\varepsilon_0}}
\leq C\left(\frac{k}{r}\right)^{N}.
\end{align*}
For $\left|\frac{|y|}{\mu}-1\right|\leq \delta$,
\begin{align*}
&\int_{\Omega_1^+\cap  \left\{y: \left|\frac{|y|}{\mu}-1\right|\leq \delta\right\}}
\left(K_2\left(\frac{|y|}{\mu}\right)-1\right)U_{\overline{x}_1,\Lambda}^{q}\left(\sum_{j=2}^kU_{\overline{x}_j,\Lambda}
+\sum_{j=1}^kU_{\underline{x}_j,\Lambda}\right)
\nonumber\\
\leq &C\frac{1}{\mu^{m_2}}\int_{\Omega_1^+\cap  \left\{y: \left|\frac{|y|}{\mu}-1\right|\leq \delta\right\}}
\left||y|-\mu\right|^{m_2}U_{\overline{x}_1,\Lambda}^{q}\left(\sum_{j=2}^kU_{\overline{x}_j,\Lambda}
+\sum_{j=1}^kU_{\underline{x}_j,\Lambda}\right)
\nonumber\\
\leq &C\frac{1}{\mu^{m_2}}\Bigg\{\int_{\Omega_1^+\cap  \left\{y: \left|\frac{|y|}{\mu}-1\right|\leq \delta\right\}\cap
\left\{y:|y-\overline{x}_1|\geq \delta_1\frac{\mu}{k}\right\}}
+\int_{\Omega_1^+\cap  \left\{y: \left|\frac{|y|}{\mu}-1\right|\leq \delta\right\}\cap
\left\{y:|y-\overline{x}_1|\leq \delta_1\frac{\mu}{k}\right\}}
\Bigg\}
\nonumber\\
&\left||y|-\mu\right|^{m_2}U_{\overline{x}_1,\Lambda}^{q}\left(\sum_{j=2}^kU_{\overline{x}_j,\Lambda}
+\sum_{j=1}^kU_{\underline{x}_j,\Lambda}\right),
\end{align*}
where $\delta_1$ is a small constant.

When $|y-\overline{x}_1|\leq \delta_1\frac{\mu}{k}$, $||y|-\mu|\leq ||y|-|\overline{x}_1||+||\overline{x}_1|-\mu|\leq \delta_2\frac{\mu}{k}$ while $\delta_2$ small. Then
\begin{align*}
\frac{C}{\mu^{m_2}}\left||y|-\mu\right|^{m_2}
\leq \frac{C}{\mu^{m_2}}\left(\delta_2\frac{\mu}{k}\right)^{m_2}=\frac{C}{k^{m_2}}.
\end{align*}
Thereby,
\begin{align*}
&\frac{C}{\mu^{m_2}}\int_{\Omega_1^+\cap  \left\{y: \left|\frac{|y|}{\mu}-1\right|\leq \delta\right\}\cap
\left\{y:|y-\overline{x}_1|\leq \delta_1\frac{\mu}{k}\right\}}
\left||y|-\mu\right|^{m_2}U_{\overline{x}_1,\Lambda}^{q}\left(\sum_{j=2}^kU_{\overline{x}_j,\Lambda}
+\sum_{j=1}^kU_{\underline{x}_j,\Lambda}\right)
\nonumber\\
\leq &\frac{C}{k^{m_2}}\int_{\mathbb{R}^N}U_{\overline{x}_1,\Lambda}^{q}\left(\sum_{j=2}^kU_{\overline{x}_j,\Lambda}
+\sum_{j=1}^kU_{\underline{x}_j,\Lambda}\right)
\leq \frac{C}{k^{m_2}}\left(\frac{k}{r}\right)^{N-2}.
\end{align*}
When $|y-\overline{x}_1|\geq \delta_1\frac{\mu}{k}$, by
\begin{align*}
\left(\sum_{j=2}^kU_{\overline{x}_j,\Lambda}
+\sum_{j=1}^kU_{\underline{x}_j,\Lambda}\right)
\leq C \left(\frac{k}{r}\right)^{\alpha}\frac{1}{|y-\overline{x}_1|^{N-2-\alpha}},
\end{align*}
where $\alpha\in \left(\frac{N-2-m}{N-2}, \frac{N-2}{2}\right)$. Therefore,
\begin{align*}
&\frac{C}{\mu^{m_2}}\int_{\Omega_1^+\cap  \left\{y: \left|\frac{|y|}{\mu}-1\right|\leq \delta\right\}\cap
\left\{y:|y-\overline{x}_1|\geq \delta_1\frac{\mu}{k}\right\}}
\left||y|-\mu\right|^{m_2}U_{\overline{x}_1,\Lambda}^{q}\left(\sum_{j=2}^kU_{\overline{x}_j,\Lambda}
+\sum_{j=1}^kU_{\underline{x}_j,\Lambda}\right)
\nonumber\\
\leq &C\int_{\Omega_1^+\cap  \left\{y: \left|\frac{|y|}{\mu}-1\right|\leq \delta\right\}\cap
\left\{y:|y-\overline{x}_1|\geq \delta_1\frac{\mu}{k}\right\}}
\left(\frac{k}{r}\right)^{\alpha}\frac{1}{|y-\overline{x}_1|^{q(N-2)+N-2-\alpha}}
=O\left(\left(\frac{k}{r}\right)^{N-\varepsilon_0}\right).
\end{align*}
Above all, the third part we get is as follows
\begin{align*}
(q+1)\int_{\Omega_1^+}\left(K_2\left(\frac{|y|}{\mu}\right)-1\right)U_{\overline{x}_1,\Lambda}^{q}\left(\sum_{j=2}^kU_{\overline{x}_j,\Lambda}
+\sum_{j=1}^kU_{\underline{x}_j,\Lambda}\right)
=O\left(\left(\frac{k}{r}\right)^{N-\varepsilon_0}\right)
+O\left(\frac{1}{k^{m_2}}\left(\frac{k}{r}\right)^{N-2}\right).
\end{align*}
Therefore,
\begin{align}\label{i22}
I_{22}
=&\frac{2k}{q+1}\int_{\Omega_1^+}K_2\left(\frac{|y|}{\mu}\right)\Bigg\{U_{\overline{x}_1,\Lambda}^{q+1}
+(q+1)U_{\overline{x}_1,\Lambda}^{q}\left(\sum_{j=2}^kU_{\overline{x}_j,\Lambda}
+\sum_{j=1}^kU_{\underline{x}_j,\Lambda}\right)\Bigg\}
+kO\left(\left(\frac{k}{r}\right)^{N-\varepsilon_0}\right)
\nonumber\\
=&\frac{2k}{q+1}\Bigg\{\int_{\mathbb{R}^N}U_{0,1}^{q+1}
-\frac{c_2}{\mu^{m_2}\Lambda^{m_2}}\int_{\mathbb{R}^N}|y_1|^{m_2}U_{0,1}^{q+1}
-\frac{c_2m_2(m_2-1)}{2\mu^{m_2}\Lambda^{m_2-2}}\int_{\mathbb{R}^N}|y_1|^{m_2-2}U_{0,1}^{q+1}(\mu-r)^2
\nonumber\\
&+\frac{C(r,\Lambda)}{\mu^{m_2}}(\mu-r)^{2+\theta_2}+\frac{C(r,\Lambda)}{\mu^{m_2+\theta_2}}
+O\left(\left(\frac{k}{\mu}\right)^{N-\varepsilon_0}\right)
\Bigg\}
\nonumber\\
&+\frac{2k}{q+1} \Bigg\{(q+1)\int_{\mathbb{R}^N}U_{\overline{x}_1,\Lambda}^{q}\left(\sum_{j=2}^kU_{\overline{x}_j,\Lambda}
+\sum_{j=1}^kU_{\underline{x}_j,\Lambda}\right)
+O\left(\frac{1}{k^{m_2}}\left(\frac{k}{r}\right)^{N-2}\right)
\Bigg\}
\nonumber\\
=&\frac{2k}{q+1}\Bigg\{\int_{\mathbb{R}^N}U_{0,1}^{q+1}
+(q+1)\int_{\mathbb{R}^N}U_{\overline{x}_1,\Lambda}^{q}\left(\sum_{j=2}^kU_{\overline{x}_j,\Lambda}
+\sum_{j=1}^kU_{\underline{x}_j,\Lambda}\right)
\nonumber\\
&-\frac{c_2}{\mu^{m_2}\Lambda^{m_2}}\int_{\mathbb{R}^N}|y_1|^{m_2}U_{0,1}^{q+1}
-\frac{c_2m_2(m_2-1)}{2\mu^{m_2}\Lambda^{m_2-2}}\int_{\mathbb{R}^N}|y_1|^{m_2-2}U_{0,1}^{q+1}(\mu-r)^2
\Bigg\}
\nonumber\\
&+k\frac{C(r,\Lambda)}{\mu^{m_2}}(\mu-r)^{2+\theta_2}+k\frac{C(r,\Lambda)}{\mu^{m_2+\theta_2}}
+kO\left(\left(\frac{k}{\mu}\right)^{N-\varepsilon_0}\right)
+kO\left(\frac{1}{k^{m_2}}\left(\frac{k}{r}\right)^{N-2}\right).
\end{align}
Similarly,
\begin{align}\label{i21}
I_{21}
=&\frac{2k}{p+1}\Bigg\{\int_{\mathbb{R}^N}V_{0,1}^{p+1}
+(p+1)\int_{\mathbb{R}^N}V_{\overline{x}_1,\Lambda}^{p}\left(\sum_{j=2}^kV_{\overline{x}_j,\Lambda}
+\sum_{j=1}^kV_{\underline{x}_j,\Lambda}\right)
\nonumber\\
&-\frac{c_1}{\mu^{m_1}\Lambda^{m_1}}\int_{\mathbb{R}^N}|y_1|^{m_1}V_{0,1}^{p+1}
-\frac{c_1m_1(m_1-1)}{2\mu^{m_1}\Lambda^{m_1-2}}\int_{\mathbb{R}^N}|y_1|^{m_1-2}V_{0,1}^{p+1}(\mu-r)^2
\Bigg\}
\nonumber\\
&+k\frac{C(r,\Lambda)}{\mu^{m_1}}(\mu-r)^{2+\theta_1}+k\frac{C(r,\Lambda)}{\mu^{m_1+\theta_1}}
+kO\left(\left(\frac{k}{\mu}\right)^{N-\varepsilon_0}\right)
+kO\left(\frac{1}{k^{m_1}}\left(\frac{k}{r}\right)^{N-2}\right).
\end{align}
By (\ref{i1}), (\ref{i21}), (\ref{i22}) and Lemma \ref{a1}, we have
\begin{align*}
I(W_1,W_2)=&I_1-I_2
\nonumber\\
=&2k\left[\left(1-\frac{1}{p+1}\right)\int_{\mathbb{R}^N}V_{0,1}^{p+1}
-\frac{1}{q+1}\int_{\mathbb{R}^N}U_{0,1}^{q+1}
\right]
\nonumber\\
&-2k\int_{\mathbb{R}^N}U_{\overline{x}_1,\Lambda}^{q}\left(\sum_{j=2}^kU_{\overline{x}_j,\Lambda}
+\sum_{j=1}^kU_{\underline{x}_j,\Lambda}\right)
\nonumber\\
&+2k\Bigg\{\frac{c_1}{(p+1)\mu^{m_1}\Lambda^{m_1}}\int_{\mathbb{R}^N}|y_1|^{m_1}V_{0,1}^{p+1}
+\frac{c_2}{(q+1)\mu^{m_2}\Lambda^{m_2}}\int_{\mathbb{R}^N}|y_1|^{m_2}U_{0,1}^{q+1}
\Bigg\}
\nonumber\\
&+2k\Bigg\{\frac{c_1m_1(m_1-1)}{2(p+1)\mu^{m_1}\Lambda^{m_1-2}}\int_{\mathbb{R}^N}|y_1|^{m_1-2}V_{0,1}^{p+1}(\mu-r)^2
\nonumber\\
&+\frac{c_2m_2(m_2-1)}{2(q+1)\mu^{m_2}\Lambda^{m_2-2}}\int_{\mathbb{R}^N}|y_1|^{m_2-2}U_{0,1}^{q+1}(\mu-r)^2
\Bigg\}
+kO\left(\left(\frac{k}{\mu}\right)^{N-\varepsilon_0}\right)
\nonumber\\
&+\sum_{i=1}^2\left[k\frac{C(r,\Lambda)}{\mu^{m_i}}(\mu-r)^{2+\theta_i}+k\frac{C(r,\Lambda)}{\mu^{m_i+\theta_i}}
+kO\left(\frac{1}{k^{m_i}}\left(\frac{k}{r}\right)^{N-2}\right)
\right]
\nonumber\\
=&kA_1-\frac{k}{\Lambda^{N-2}}\left[\frac{B_4k^{N-2}}{(r\sqrt{1-h^2})^{N-2}}
-\frac{B_5k}{r^{N-2}h^{N-3}\sqrt{1-h^2}}\right]
\nonumber\\
&+k\left[\frac{A_2}{\mu^{m_2}\Lambda^{m_2}}+\frac{\bar{A}_2}{\mu^{m_1}\Lambda^{m_1}}\right]
+k\left[\frac{A_3}{\mu^{m_2}\Lambda^{m_2-2}}+\frac{\bar{A}_3}{\mu^{m_1}\Lambda^{m_1-2}}\right](\mu-r)^2
\nonumber\\
&+k\frac{C(r,\Lambda)}{\mu^m}(\mu-r)^{\theta+2}+k\frac{C(r,\Lambda)}{\mu^{m+\theta}}
+kO\left(\frac{1}{k^{\frac{m(N-2)}{N-2-m}+\frac{2(N-3)}{N-1}+\theta}}\right),
\end{align*}
where
\begin{align}\label{ca1}
A_1=2\left[\left(1-\frac{1}{p+1}\right)\int_{\mathbb{R}^N}V_{0,1}^{p+1}
-\frac{1}{q+1}\int_{\mathbb{R}^N}U_{0,1}^{q+1}\right],
\end{align}
\begin{align}\label{cb4}
B_4=2B_0B_1, \ B_5=2B_0B_2,
\end{align}
\begin{align}\label{ca2}
A_2=\frac{2c_2}{q+1}\int_{\mathbb{R}^N}|y_1|^{m_2}U_{0,1}^{q+1},\
\bar{A}_2=\frac{2c_1}{p+1}\int_{\mathbb{R}^N}|y_1|^{m_1}V_{0,1}^{p+1},
\end{align}
\begin{align}\label{ca3}
A_3=\frac{c_2m_2(m_2-1)}{q+1}\int_{\mathbb{R}^N}|y_1|^{m_2-2}U_{0,1}^{q+1},\
\bar{A}_3=\frac{c_1m_1(m_1-1)}{p+1}\int_{\mathbb{R}^N}|y_1|^{m_1-2}V_{0,1}^{p+1},
\end{align}
and
\begin{align}\label{cm}
m=\min\{m_1,m_2\},\ \theta=\min\{\theta_1,\theta_2\}.
\end{align}
\end{proof}

We next need to calculate $\frac{\partial I(W_1,W_2)}{\partial \Lambda}$ and $\frac{\partial I(W_1,W_2)}{\partial h}$.

\begin{proposition}\label{iwlamda}
We have
\begin{align*}
\frac{\partial I(W_1,W_2)}{\partial \Lambda}
=&\frac{k(N-2)}{\Lambda^{N-1}}\left[\frac{B_4k^{N-2}}{(r\sqrt{1-h^2})^{N-2}}
+\frac{B_5k}{r^{N-2}h^{N-3}\sqrt{1-h^2}}\right]
\nonumber\\
&-k\left[\frac{A_2m_2}{\mu^{m_2}\Lambda^{m_2+1}}+\frac{\bar{A}_2m_1}{\mu^{m_1}\Lambda^{m_1+1}}\right]
-k\left[\frac{A_3(m_2-2)}{\mu^{m_2}\Lambda^{m_2-1}}+\frac{\bar{A}_3(m_1-2)}{\mu^{m_1}\Lambda^{m_1-1}}\right](\mu-r)^2
\nonumber\\
&
+kO\left(\frac{1}{k^{\frac{m(N-2)}{N-2-m}+\theta}}\right),
\end{align*}
where $B_4$, $B_5$, $A_2$, $\bar{A}_2$, $A_3$, $\bar{A}_3$, $m$, $\theta$ are same positive constants in Proposition \ref{iw}.
\end{proposition}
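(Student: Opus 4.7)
The plan is to mimic the expansion already carried out for $I(W_1,W_2)$ in Proposition \ref{iw}, but with $\partial_\Lambda W_i$ inserted as a test factor. Using the limit identities $-\Delta U_{\overline{x}_j,\Lambda}=V_{\overline{x}_j,\Lambda}^p$, $-\Delta V_{\overline{x}_j,\Lambda}=U_{\overline{x}_j,\Lambda}^q$ (and their analogues for $\underline{x}_j$), integration by parts on $\int \nabla W_1\nabla W_2$ turns the gradient piece into $\int \sum_j(V_{\overline{x}_j,\Lambda}^p+V_{\underline{x}_j,\Lambda}^p)\partial_\Lambda W_2+\int \sum_j(U_{\overline{x}_j,\Lambda}^q+U_{\underline{x}_j,\Lambda}^q)\partial_\Lambda W_1$, so one can rewrite
\[
\frac{\partial I(W_1,W_2)}{\partial\Lambda}
= -\int_{\mathbb{R}^N}R_{1,k}\,\frac{\partial W_2}{\partial\Lambda}
  -\int_{\mathbb{R}^N}R_{2,k}\,\frac{\partial W_1}{\partial\Lambda},
\]
with $R_k=(R_{1,k},R_{2,k})$ the residual from Section~\ref{Finite-dimensional reduction}. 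This identity is the starting point and lets me re-use, with almost no change, the region decompositions from the proof of Proposition \ref{iw}.

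The next step is to split each $R_{i,k}$ as $G_1+G_2$ according to (i) the bubble-bubble interactions (the part involving $W_2^p-\sum V^p$ or $W_1^q-\sum U^q$) and (ii) the potential defect $K_i(|y|/\mu)-1$. For (i), the asymptotic formulas in Lemma \ref{a1} for $\int U_{\overline{x}_1,\Lambda}^q U_{\overline{x}_i,\Lambda}$ and $\int U_{\overline{x}_1,\Lambda}^q U_{\underline{x}_i,\Lambda}$ and their analogues for $V$ carry a prefactor $\Lambda^{-(N-2)}$; differentiating in $\Lambda$ produces the $(N-2)/\Lambda^{N-1}$ factor and reproduces the leading bracket $[B_4 k^{N-2}/(r\sqrt{1-h^2})^{N-2}+B_5 k/(r^{N-2}h^{N-3}\sqrt{1-h^2})]$, summed over the two circles with the symmetry used in Proposition \ref{iw}. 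For (ii), scaling gives $\int|y_1|^{m_i}U_{0,\Lambda}^{q+1}=\Lambda^{-m_i}\int|y_1|^{m_i}U_{0,1}^{q+1}$ and $\int|y_1|^{m_i-2}U_{0,\Lambda}^{q+1}=\Lambda^{-(m_i-2)}\int|y_1|^{m_i-2}U_{0,1}^{q+1}$; differentiating in $\Lambda$ supplies exactly the factors $-m_i/\Lambda^{m_i+1}$ and $-(m_i-2)/\Lambda^{m_i-1}$ appearing in front of $A_2,\bar A_2$ and $A_3,\bar A_3$ respectively, while the Taylor remainders in the expansion of $K_i$ around $r_0$ are handled region-wise (over $\{||y|-\mu|\le\sqrt{\mu}\}$ and its complement) as in the proof of Proposition \ref{iw}.

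The main technical obstacle is controlling the remainders after differentiation: a formal term-by-term derivative of the expansion in Proposition \ref{iw} would require each $O(\cdot)$ to be differentiable in $\Lambda$ with a commensurate bound, which is not automatic. To avoid this I will redo the region estimates with $\partial_\Lambda W_1$ or $\partial_\Lambda W_2$ in place of a factor of $W_1$ or $W_2$, exploiting the pointwise bounds $|\partial_\Lambda U_{\overline{x}_j,\Lambda}|+|\partial_\Lambda V_{\overline{x}_j,\Lambda}|\le C(1+|y-\overline{x}_j|)^{-(N-2)}$ (and the same at $\underline{x}_j$), which match the decay of $U_{\overline{x}_j,\Lambda}$ and $V_{\overline{x}_j,\Lambda}$ themselves. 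Consequently every convolution and overlap estimate from the proof of Proposition \ref{iw} transfers unchanged, picking up at most a harmless factor of $1/\Lambda$; together with the range of $m$ prescribed in (\ref{m+}) this absorbs the final error into $kO(k^{-m(N-2)/(N-2-m)-\theta})$, as claimed.
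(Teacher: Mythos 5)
Your starting identity $\partial_\Lambda I(W_1,W_2)=-\int_{\mathbb{R}^N}R_{1,k}\,\partial_\Lambda W_2-\int_{\mathbb{R}^N}R_{2,k}\,\partial_\Lambda W_1$ is correct (it follows from $-\Delta W_1=\sum_j(V^p_{\overline{x}_j,\Lambda}+V^p_{\underline{x}_j,\Lambda})$ and $-\Delta W_2=\sum_j(U^q_{\overline{x}_j,\Lambda}+U^q_{\underline{x}_j,\Lambda})$), and your handling of the potential defect $K_i-1$ (region splitting, Taylor expansion at $r_0$, the scalings $\int|y_1|^{m_i}U_{0,\Lambda}^{q+1}=\Lambda^{-m_i}\int|y_1|^{m_i}U_{0,1}^{q+1}$, etc.) and of the remainders, i.e.\ redoing the estimates of Proposition \ref{iw} with $\overline{Z}_{3j},\overline{Y}_{3j}$ inserted and using $|\overline{Z}_{3j}|,|\overline{Y}_{3j}|\le C(1+|y-\overline{x}_j|)^{-(N-2)}$, is exactly the mechanism the paper uses: it never differentiates an $O(\cdot)$ term either, but recomputes every integral with the $\Lambda$-derivative inside.

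The genuine gap is the leading interaction bracket. In your decomposition the interaction contribution is, at leading order, $-2k\sum\bigl(\int\partial_\Lambda(U^q_{\overline{x}_1,\Lambda})U_{\overline{x}_j,\Lambda}+\int\partial_\Lambda(V^p_{\overline{x}_1,\Lambda})V_{\overline{x}_j,\Lambda}\bigr)$ summed over the other bubbles, i.e.\ two \emph{one-sided} derivatives, and Lemma \ref{a1} gives neither of them: it only gives the undifferentiated integral, and (as you say yourself) its remainder cannot be differentiated, while your fallback via pointwise bounds on $\partial_\Lambda U,\partial_\Lambda V$ only yields upper bounds of the right order, never the constant $(N-2)B_0/\Lambda^{N-1}$, hence not $B_4,B_5$ — which is the whole point of the proposition, since this constant fixes $\Lambda_0$. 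What is missing is either (a) the duality coming from the system: freezing the second bubble's parameter and integrating by parts twice with $-\Delta U=V^p$, $-\Delta V=U^q$ gives $\int\partial_\Lambda(V^p_{\overline{x}_1,\Lambda})V_{\overline{x}_j,\Lambda}=\int U^q_{\overline{x}_j,\Lambda}\partial_\Lambda U_{\overline{x}_1,\Lambda}=\int U^q_{\overline{x}_1,\Lambda}\partial_\Lambda U_{\overline{x}_j,\Lambda}$, so the two one-sided derivatives recombine into the full derivative $\frac{d}{d\Lambda}\int U^q_{\overline{x}_1,\Lambda}U_{\overline{x}_j,\Lambda}$, which must then be computed sharply — the paper does this by differentiating under the integral and using the expansions of $U_{0,1}$ and $U'_{0,1}$ to get $-\frac{(N-2)B_0}{\Lambda^{N-1}|\overline{x}_1-\overline{x}_j|^{N-2}}+O\bigl(|\overline{x}_1-\overline{x}_j|^{-(N-1)}\bigr)$; or (b) separate sharp asymptotics for each one-sided derivative (they carry different powers of $\Lambda$ through $\Lambda^{N/(q+1)}$ and $\Lambda^{N/(p+1)}$, and only their sum produces the clean factor $-(N-2)\Lambda^{-(N-1)}$). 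Note also that a literal reading of your sentence ``differentiate Lemma \ref{a1} and its $V$-analogue'' would double count, because $\int V^p_{\overline{x}_1,\Lambda}V_{\overline{x}_j,\Lambda}=\int U^q_{\overline{x}_1,\Lambda}U_{\overline{x}_j,\Lambda}$, whereas your identity supplies only one one-sided derivative from each of $R_{1,k}$ and $R_{2,k}$. Once this leading computation is supplied, the rest of your plan coincides with the paper's proof.
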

\begin{proof}
Since
\begin{align*}
I(W_1,W_2)
=&\int_{\mathbb{R}^N}\nabla W_1\nabla W_2
-\left(\frac{1}{p+1}\int_{\mathbb{R}^N}K_1\left(\frac{|y|}{\mu}\right)|W_2|^{p+1}
+\frac{1}{q+1}\int_{\mathbb{R}^N}K_2\left(\frac{|y|}{\mu}\right)|W_1|^{q+1}\right)
\nonumber\\
=&2k\int_{\mathbb{R}^N}V_{0,1}^{p+1}
+2k\int_{\mathbb{R}^N}V_{\overline{x}_1,\Lambda}^{p}\left(\sum_{j=2}^kV_{\overline{x}_j,\Lambda}
+\sum_{j=1}^kV_{\underline{x}_j,\Lambda}\right)
\nonumber\\
&-\left(\frac{1}{p+1}\int_{\mathbb{R}^N}K_1\left(\frac{|y|}{\mu}\right)|W_2|^{p+1}
+\frac{1}{q+1}\int_{\mathbb{R}^N}K_2\left(\frac{|y|}{\mu}\right)|W_1|^{q+1}\right),
\end{align*}
we have
\begin{align*}
\frac{\partial I(W_1,W_2)}{\partial \Lambda}
=&2k\frac{\partial }{\partial \Lambda} \int_{\mathbb{R}^N}V_{\overline{x}_1,\Lambda}^{p}\left(\sum_{j=2}^kV_{\overline{x}_j,\Lambda}
+\sum_{j=1}^kV_{\underline{x}_j,\Lambda}\right)
\nonumber\\
&-2k\int_{\Omega_1^+}\left[K_1\left(\frac{|y|}{\mu}\right)W_2^{p}\frac{\partial W_2}{\partial \Lambda}
+K_2\left(\frac{|y|}{\mu}\right)W_1^{q}\frac{\partial W_1}{\partial \Lambda}
\right].
\end{align*}
For $y\in \Omega_1^+$, we see that
\begin{align*}
&\left|\frac{\partial }{\partial \Lambda}\left[W_1^{q+1}-U_{\overline{x}_1,\Lambda}^{q+1}-
(q+1)U_{\overline{x}_1,\Lambda}^{q}\left(\sum_{j=2}^kU_{\overline{x}_j,\Lambda}
+\sum_{j=1}^kU_{\underline{x}_j,\Lambda}\right)\right]
\right|
\nonumber\\
\leq &
\begin{cases}
CU_{\overline{x}_1,\Lambda}^{\frac{q+1}{2}}\left(\sum_{j=2}^kU_{\overline{x}_j,\Lambda}
+\sum_{j=1}^kU_{\underline{x}_j,\Lambda}\right)^{\frac{q+1}{2}},\ & \text{ if } q+1\leq 3,\\
CU_{\overline{x}_1,\Lambda}^{\frac{q+1}{2}}\left(\sum_{j=2}^kU_{\overline{x}_j,\Lambda}
+\sum_{j=1}^kU_{\underline{x}_j,\Lambda}\right)^{\frac{q+1}{2}}
+CU_{\overline{x}_1,\Lambda}^{q-1}\left(\sum_{j=2}^kU_{\overline{x}_j,\Lambda}
+\sum_{j=1}^kU_{\underline{x}_j,\Lambda}\right)^2, \ & \text{ if } q+1>3.
\end{cases}
\end{align*}
Then
\begin{align*}
&(q+1)\int_{\Omega_1^+}K_2\left(\frac{|y|}{\mu}\right)W_1^{q}\frac{\partial W_1}{\partial \Lambda}
\nonumber\\
=&\int_{\Omega_1^+}K_2\left(\frac{|y|}{\mu}\right)
\frac{\partial W_1^{q+1}}{\partial \Lambda}
\nonumber\\
=&\int_{\Omega_1^+}K_2\left(\frac{|y|}{\mu}\right)
\frac{\partial U_{\overline{x}_1,\Lambda}^{q+1}}{\partial \Lambda}
+(q+1)\int_{\Omega_1^+}K_2\left(\frac{|y|}{\mu}\right)\frac{\partial }{\partial \Lambda}
\left(U_{\overline{x}_1,\Lambda}^{q}\left(\sum_{j=2}^kU_{\overline{x}_j,\Lambda}
+\sum_{j=1}^kU_{\underline{x}_j,\Lambda}\right)
\right)
\nonumber\\
&+O\left(\int_{\Omega_1^+}U_{\overline{x}_1,\Lambda}^{\frac{q+1}{2}}\left(\sum_{j=2}^kU_{\overline{x}_j,\Lambda}
+\sum_{j=1}^kU_{\underline{x}_j,\Lambda}\right)^{\frac{q+1}{2}}
\right)
\nonumber\\
&+
\begin{cases}
0, \ &\text{ if } q+1\leq 3,\\
O\left(\int_{\Omega_1^+}U_{\overline{x}_1,\Lambda}^{q-1}\left(\sum_{j=2}^kU_{\overline{x}_j,\Lambda}
+\sum_{j=1}^kU_{\underline{x}_j,\Lambda}\right)^2
\right) , \ &\text{ if } q+1>3.
\end{cases}
\end{align*}
Using the proof of Proposition \ref{iw}, we get
\begin{align*}
\int_{\Omega_1^+}U_{\overline{x}_1,\Lambda}^{\frac{q+1}{2}}\left(\sum_{j=2}^kU_{\overline{x}_j,\Lambda}
+\sum_{j=1}^kU_{\underline{x}_j,\Lambda}\right)^{\frac{q+1}{2}}
=O\left(\left(\frac{k}{r}\right)^{N-\varepsilon_0}\right)
\end{align*}
and
\begin{align*}
\int_{\Omega_1^+}U_{\overline{x}_1,\Lambda}^{q-1}\left(\sum_{j=2}^kU_{\overline{x}_j,\Lambda}
+\sum_{j=1}^kU_{\underline{x}_j,\Lambda}\right)^2
=O\left(\left(\frac{k}{r}\right)^{N-\varepsilon_0}\right).
\end{align*}
On the other hand,
\begin{align*}
\int_{\Omega_1^+}K_2\left(\frac{|y|}{\mu}\right)
\frac{\partial U_{\overline{x}_1,\Lambda}^{q+1}}{\partial \Lambda}
=&\int_{\Omega_1^+}
\frac{\partial U_{\overline{x}_1,\Lambda}^{q+1}}{\partial \Lambda}
+\int_{\Omega_1^+}\left(K_2\left(\frac{|y|}{\mu}\right)-1\right)
\frac{\partial U_{\overline{x}_1,\Lambda}^{q+1}}{\partial \Lambda}
\nonumber\\
=&\int_{\mathbb{R}^N}\frac{\partial U_{\overline{x}_1,\Lambda}^{q+1}}{\partial \Lambda}
+\int_{\Omega_1^+}\left(K_2\left(\frac{|y|}{\mu}\right)-1\right)
\frac{\partial U_{\overline{x}_1,\Lambda}^{q+1}}{\partial \Lambda}
+O\left(\left(\frac{k}{r}\right)^{N}\right)
\nonumber\\
=&\int_{\Omega_1^+}\left(K_2\left(\frac{|y|}{\mu}\right)-1\right)
\frac{\partial U_{\overline{x}_1,\Lambda}^{q+1}}{\partial \Lambda}
+O\left(\left(\frac{k}{r}\right)^{N}\right)
\nonumber\\
=&\left\{\int_{\Omega_1^+\cap \{y:|\frac{|y|}{\mu}-1|\geq \delta\}}
+\int_{\Omega_1^+\cap \{y:|\frac{|y|}{\mu}-1|\leq \delta\}}\right\}
\left(K_2\left(\frac{|y|}{\mu}\right)-1\right)
\frac{\partial U_{\overline{x}_1,\Lambda}^{q+1}}{\partial \Lambda}
+O\left(\left(\frac{k}{r}\right)^{N}\right).
\end{align*}
For $|\frac{|y|}{\mu}-1|\geq \delta$, $|y-\overline{x}_1|\geq ||y|-\mu|-|\mu-|\overline{x}_1||\geq \frac{1}{2}\delta \mu$, then
\begin{align*}
\int_{\Omega_1^+\cap \{y:|\frac{|y|}{\mu}-1|\geq \delta\}}\left(K_2\left(\frac{|y|}{\mu}\right)-1\right)
\frac{\partial U_{\overline{x}_1,\Lambda}^{q+1}}{\partial \Lambda}
\leq \frac{C}{\mu^{N-\varepsilon_0}}.
\end{align*}
For $|\frac{|y|}{\mu}-1|\leq \delta$, by the decay property of $K_2$,
\begin{align*}
&\int_{\Omega_1^+\cap \{y:|\frac{|y|}{\mu}-1|\leq \delta\}}\left(K_2\left(\frac{|y|}{\mu}\right)-1\right)
\frac{\partial U_{\overline{x}_1,\Lambda}^{q+1}}{\partial \Lambda}
\nonumber\\
=&-\frac{c_2}{\mu^{m_2}}\int_{\Omega_1^+\cap \{y:|\frac{|y|}{\mu}-1|\leq \delta\}}
\left||y|-\mu\right|^{m_2}\frac{\partial U_{\overline{x}_1,\Lambda}^{q+1}}{\partial \Lambda}
+O\left(\frac{1}{\mu^{m_2+\theta_2}}\int_{\Omega_1^+\cap \{y:|\frac{|y|}{\mu}-1|\leq \delta\}}
\left||y|-\mu\right|^{m_2+\theta_2}\frac{\partial U_{\overline{x}_1,\Lambda}^{q+1}}{\partial \Lambda}\right)
\nonumber\\
=&-\frac{c_2}{\mu^{m_2}}\int_{\mathbb{R}^N}
\left||y|-\mu\right|^{m_2}\frac{\partial U_{\overline{x}_1,\Lambda}^{q+1}}{\partial \Lambda}
+O\left(\int_{\mathbb{R}^N\setminus B_{\frac{\mu}{k}}(\overline{x}_1)}\left(\frac{|y|^{m_2}}{\mu^{m_2}}+1\right)
\frac{\partial U_{\overline{x}_1,\Lambda}^{q+1}}{\partial \Lambda}
\right)
\nonumber\\
&+O\left(\frac{1}{\mu^{m_2+\theta_2}}\int_{\Omega_1^+\cap \{y:|\frac{|y|}{\mu}-1|\leq \delta\}}
\left||y|-\mu\right|^{m_2+\theta_2}\frac{\partial U_{\overline{x}_1,\Lambda}^{q+1}}{\partial \Lambda}\right)
\nonumber\\
=&-\frac{c_2}{\mu^{m_2}}\int_{\mathbb{R}^N}
\left||y|-\mu\right|^{m_2}\frac{\partial U_{\overline{x}_1,\Lambda}^{q+1}}{\partial \Lambda}
+O\left(\left(\frac{k}{\mu}\right)^{N-\varepsilon_0}\right)
\nonumber\\
&+O\left(\frac{1}{\mu^{m_2+\theta_2}}\int_{\Omega_1^+\cap \{y:|\frac{|y|}{\mu}-1|\leq \delta\}}
\left||y|-\mu\right|^{m_2+\theta_2}\frac{\partial U_{\overline{x}_1,\Lambda}^{q+1}}{\partial \Lambda}\right)
\nonumber\\
=&-\frac{c_2}{\mu^{m_2}}\int_{\mathbb{R}^N}
\left||y+\overline{x}_1|-\mu\right|^{m_2}\frac{\partial U_{0,\Lambda}^{q+1}}{\partial \Lambda}
+O\left(\left(\frac{k}{\mu}\right)^{N-\varepsilon_0}\right)
\nonumber\\
&+O\left(\frac{1}{\mu^{m_2+\theta_2}}\int_{\Omega_1^+\cap \{y:|\frac{|y|}{\mu}-1|\leq \delta\}}
\left||y|-\mu\right|^{m_2+\theta_2}\frac{\partial U_{\overline{x}_1,\Lambda}^{q+1}}{\partial \Lambda}\right).
\end{align*}
By Taylor expansion,
\begin{align*}
&\int_{\mathbb{R}^N}
\left||y+\overline{x}_1|-\mu\right|^{m_2}\frac{\partial U_{0,\Lambda}^{q+1}}{\partial \Lambda}
\nonumber\\
=&\int_{\mathbb{R}^N}
\left|y_1\right|^{m_2}\frac{\partial U_{0,\Lambda}^{q+1}}{\partial \Lambda}
+\frac{1}{2}m_2(m_2-1)\int_{\mathbb{R}^N}
\left|y_1\right|^{m_2-2}\frac{\partial U_{0,\Lambda}^{q+1}}{\partial \Lambda}(\mu-r)^2
+C(r,\Lambda)(\mu-r)^{2+\theta_2}
\nonumber\\
=&-\frac{m_2}{\Lambda^{m_2+1}}\int_{\mathbb{R}^N}
\left|y_1\right|^{m_2}U_{0,1}^{q+1}
-\frac{m_2(m_2-1)(m_2-2)}{2\Lambda^{m_2-1}}\int_{\mathbb{R}^N}
\left|y_1\right|^{m_2-2}U_{0,1}^{q+1}(\mu-r)^2
\nonumber\\
&+\frac{\partial C(r,\Lambda)}{\partial \Lambda}(\mu-r)^{2+\theta_2}.
\end{align*}
Moreover,
\begin{align*}
&O\left(\frac{1}{\mu^{m_2+\theta_2}}\int_{\Omega_1^+\cap \{y:|\frac{|y|}{\mu}-1|\leq \delta\}}
\left||y|-\mu\right|^{m_2+\theta_2}\frac{\partial U_{\overline{x}_1,\Lambda}^{q+1}}{\partial \Lambda}\right)
\nonumber\\
=&O\left(\frac{1}{\mu^{m_2+\theta_2}}\int_{\mathbb{R}^N}
\left||y|-\mu\right|^{m_2+\theta_2}\frac{\partial U_{\overline{x}_1,\Lambda}^{q+1}}{\partial \Lambda}\right)
+O\left(\left(\frac{k}{\mu}\right)^{N-\varepsilon_0}\right)
\nonumber\\
=&\frac{\partial C(r,\Lambda)}{\partial \Lambda}\frac{1}{\mu^{m_2+\theta_2}}
+O\left(\left(\frac{k}{\mu}\right)^{N-\varepsilon_0}\right).
\end{align*}
Therefore,
\begin{align*}
&\int_{\Omega_1^+}K_2\left(\frac{|y|}{\mu}\right)
\frac{\partial U_{\overline{x}_1,\Lambda}^{q+1}}{\partial \Lambda}
\nonumber\\
=&\frac{c_2m_2}{\Lambda^{m_2+1}\mu^{m_2}}\int_{\mathbb{R}^N}
\left|y_1\right|^{m_2}U_{0,1}^{q+1}
+\frac{c_2m_2(m_2-1)(m_2-2)}{2\Lambda^{m_2-1}\mu^{m_2}}\int_{\mathbb{R}^N}
\left|y_1\right|^{m_2-2}U_{0,1}^{q+1}(\mu-r)^2
\nonumber\\
&+\frac{\partial C(r,\Lambda)}{\partial \Lambda}\frac{(\mu-r)^{2+\theta_2}}{\mu^{m_2}}
+\frac{\partial C(r,\Lambda)}{\partial \Lambda}\frac{1}{\mu^{m_2+\theta_2}}
+O\left(\left(\frac{k}{\mu}\right)^{N-\varepsilon_0}\right).
\end{align*}
Using a proof similar to Proposition \ref{iw}, it holds
\begin{align*}
&\int_{\Omega_1^+}K_2\left(\frac{|y|}{\mu}\right)\frac{\partial }{\partial \Lambda}\left(U_{\overline{x}_1,\Lambda}^{q}\left(\sum_{j=2}^kU_{\overline{x}_j,\Lambda}
+\sum_{j=1}^kU_{\underline{x}_j,\Lambda}\right)
\right)
\nonumber\\
=&\frac{\partial }{\partial \Lambda}\int_{\mathbb{R}^N}U_{\overline{x}_1,\Lambda}^{q}\left(\sum_{j=2}^kU_{\overline{x}_j,\Lambda}
+\sum_{j=1}^kU_{\underline{x}_j,\Lambda}\right)
\nonumber\\
&+\frac{\partial }{\partial \Lambda}\int_{\mathbb{R}^N\setminus \Omega_1^+}U_{\overline{x}_1,\Lambda}^{q}\left(\sum_{j=2}^kU_{\overline{x}_j,\Lambda}
+\sum_{j=1}^kU_{\underline{x}_j,\Lambda}\right)
\nonumber\\
&+\frac{\partial }{\partial \Lambda}
\int_{\Omega_1^+}\left(K_2\left(\frac{|y|}{\mu}\right)-1\right)
U_{\overline{x}_1,\Lambda}^{q}\left(\sum_{j=2}^kU_{\overline{x}_j,\Lambda}
+\sum_{j=1}^kU_{\underline{x}_j,\Lambda}\right)
\nonumber\\
=&\frac{\partial }{\partial \Lambda}\int_{\mathbb{R}^N}U_{\overline{x}_1,\Lambda}^{q}\left(\sum_{j=2}^kU_{\overline{x}_j,\Lambda}
+\sum_{j=1}^kU_{\underline{x}_j,\Lambda}\right)
+O\left(\left(\frac{k}{\mu}\right)^{N-\varepsilon_0}\right)
\nonumber\\
&+\frac{\partial }{\partial \Lambda}
\int_{\Omega_1^+}\left(K_2\left(\frac{|y|}{\mu}\right)-1\right)
U_{\overline{x}_1,\Lambda}^{q}\left(\sum_{j=2}^kU_{\overline{x}_j,\Lambda}
+\sum_{j=1}^kU_{\underline{x}_j,\Lambda}\right)
\nonumber\\
=&\frac{\partial }{\partial \Lambda}\int_{\mathbb{R}^N}U_{\overline{x}_1,\Lambda}^{q}\left(\sum_{j=2}^kU_{\overline{x}_j,\Lambda}
+\sum_{j=1}^kU_{\underline{x}_j,\Lambda}\right)
+O\left(\left(\frac{k}{\mu}\right)^{N-\varepsilon_0}\right)
+O\left(\frac{1}{k^{m_2}}\left(\frac{k}{r}\right)^{N-2}\right).
\end{align*}
Thus,
\begin{align*}
&\frac{1}{q+1}\int_{\Omega_1^+}K_2\left(\frac{|y|}{\mu}\right)
\frac{\partial W_1^{q+1}}{\partial \Lambda}
\nonumber\\
=&\frac{c_2m_2}{(q+1)\Lambda^{m_2+1}\mu^{m_2}}\int_{\mathbb{R}^N}
\left|y_1\right|^{m_2}U_{0,1}^{q+1}
+\frac{c_2m_2(m_2-1)(m_2-2)}{2(q+1)\Lambda^{m_2-1}\mu^{m_2}}\int_{\mathbb{R}^N}
\left|y_1\right|^{m_2-2}U_{0,1}^{q+1}(\mu-r)^2
\nonumber\\
&+\frac{\partial C(r,\Lambda)}{\partial \Lambda}\frac{(\mu-r)^{2+\theta_2}}{\mu^{m_2}}
+\frac{\partial C(r,\Lambda)}{\partial \Lambda}\frac{1}{\mu^{m_2+\theta_2}}
+O\left(\left(\frac{k}{\mu}\right)^{N-\varepsilon_0}\right)
\nonumber\\
&+\frac{\partial }{\partial \Lambda}\int_{\mathbb{R}^N}U_{\overline{x}_1,\Lambda}^{q}\left(\sum_{j=2}^kU_{\overline{x}_j,\Lambda}
+\sum_{j=1}^kU_{\underline{x}_j,\Lambda}\right)
+O\left(\frac{1}{k^{m_2}}\left(\frac{k}{r}\right)^{N-2}\right).
\end{align*}
In an analogy way, we have
\begin{align*}
&\frac{1}{p+1}\int_{\Omega_1^+}K_1\left(\frac{|y|}{\mu}\right)
\frac{\partial W_2^{p+1}}{\partial \Lambda}
\nonumber\\
=&\frac{c_1m_1}{(p+1)\Lambda^{m_1+1}\mu^{m_1}}\int_{\mathbb{R}^N}
\left|y_1\right|^{m_1}V_{0,1}^{p+1}
+\frac{c_1m_1(m_1-1)(m_1-2)}{2(p+1)\Lambda^{m_1-1}\mu^{m_1}}\int_{\mathbb{R}^N}
\left|y_1\right|^{m_1-2}V_{0,1}^{p+1}(\mu-r)^2
\nonumber\\
&+\frac{\partial C(r,\Lambda)}{\partial \Lambda}\frac{(\mu-r)^{2+\theta_1}}{\mu^{m_1}}
+\frac{\partial C(r,\Lambda)}{\partial \Lambda}\frac{1}{\mu^{m_1+\theta_1}}
+O\left(\left(\frac{k}{\mu}\right)^{N-\varepsilon_0}\right)
\nonumber\\
&+\frac{\partial }{\partial \Lambda}\int_{\mathbb{R}^N}V_{\overline{x}_1,\Lambda}^{p}\left(\sum_{j=2}^kV_{\overline{x}_j,\Lambda}
+\sum_{j=1}^kV_{\underline{x}_j,\Lambda}\right)
+O\left(\frac{1}{k^{m_1}}\left(\frac{k}{r}\right)^{N-2}\right).
\end{align*}
Furthermore,
\begin{align}\label{ilamdaw}
\frac{\partial I(W_1,W_2)}{\partial \Lambda}
=&2k\frac{\partial }{\partial \Lambda} \int_{\mathbb{R}^N}V_{\overline{x}_1,\Lambda}^{p}\left(\sum_{j=2}^kV_{\overline{x}_j,\Lambda}
+\sum_{j=1}^kV_{\underline{x}_j,\Lambda}\right)
\nonumber\\
&-2k\left(\frac{1}{p+1}\int_{\Omega_1^+}K_1\left(\frac{|y|}{\mu}\right)
\frac{\partial W_2^{p+1}}{\partial \Lambda}
+\frac{1}{q+1}\int_{\Omega_1^+}K_2\left(\frac{|y|}{\mu}\right)
\frac{\partial W_1^{q+1}}{\partial \Lambda}
\right)
\nonumber\\
=&-2k\frac{\partial }{\partial \Lambda}\int_{\mathbb{R}^N}U_{\overline{x}_1,\Lambda}^{q}\left(\sum_{j=2}^kU_{\overline{x}_j,\Lambda}
+\sum_{j=1}^kU_{\underline{x}_j,\Lambda}\right)
\nonumber\\
&-k\left[\frac{A_2m_2}{\mu^{m_2}\Lambda^{m_2+1}}+\frac{\bar{A}_2m_1}{\mu^{m_1}\Lambda^{m_1+1}}\right]
-k\left[\frac{A_3(m_2-2)}{\mu^{m_2}\Lambda^{m_2-1}}+\frac{\bar{A}_3(m_1-2)}{\mu^{m_1}\Lambda^{m_1-1}}\right](\mu-r)^2
\nonumber\\
&
+\sum_{i=1}^2\left[\frac{\partial C(r,\Lambda)}{\partial \Lambda}\frac{(\mu-r)^{2+\theta_i}}{\mu^{m_i}}
+\frac{\partial C(r,\Lambda)}{\partial \Lambda}\frac{1}{\mu^{m_i+\theta_i}}
+O\left(\frac{1}{k^{m_i}}\left(\frac{k}{r}\right)^{N-2}\right)
\right]
\nonumber\\
&+O\left(\left(\frac{k}{\mu}\right)^{N-\varepsilon_0}\right).
\end{align}
We finally estimate $\frac{\partial }{\partial \Lambda}\int_{\mathbb{R}^N}U_{\overline{x}_1,\Lambda}^{q}\left(\sum_{j=2}^kU_{\overline{x}_j,\Lambda}
+\sum_{j=1}^kU_{\underline{x}_j,\Lambda}\right)$ and that completes the proof.
For $j=2, \cdots, k$, by the decay property of $U'$, we deduce
\begin{align*}
&\frac{\partial }{\partial \Lambda}\int_{\mathbb{R}^N}U_{\overline{x}_1,\Lambda}^{q}U_{\overline{x}_j,\Lambda}
\nonumber\\
=&\frac{\partial }{\partial \Lambda}\int_{\mathbb{R}^N}U_{0,1}^{q}(z)U_{0,1}(z+\Lambda(\overline{x}_1-\overline{x}_j))
\nonumber\\
=&\int_{\mathbb{R}^N}U_{0,1}^{q}(z)
\left(\frac{\partial }{\partial \Lambda}U_{0,1}(z+\Lambda(\overline{x}_1-\overline{x}_j))
-\frac{\partial }{\partial \Lambda}U_{0,1}(\Lambda(\overline{x}_1-\overline{x}_j))
+\frac{\partial }{\partial \Lambda}U_{0,1}(\Lambda(\overline{x}_1-\overline{x}_j))
\right)
\nonumber\\
=&\frac{\partial }{\partial \Lambda}\int_{\mathbb{R}^N}U_{0,1}^{q}(z)U_{0,1}(\Lambda(\overline{x}_1-\overline{x}_j))
+\frac{\partial }{\partial \Lambda}\int_{\mathbb{R}^N}U_{0,1}^{q}(z)
\left(U_{0,1}(z+\Lambda(\overline{x}_1-\overline{x}_j))
-U_{0,1}(\Lambda(\overline{x}_1-\overline{x}_j))\right)
\nonumber\\
=&\int_{\mathbb{R}^N}U_{0,1}^{q}(z)U'_{0,1}(\Lambda(\overline{x}_1-\overline{x}_j))|\overline{x}_1-\overline{x}_j|
\nonumber\\
&+\int_{\mathbb{R}^N}U_{0,1}^{q}(z)\left[U'_{0,1}(z+\Lambda(\overline{x}_1-\overline{x}_j))
-U'_{0,1}(\Lambda(\overline{x}_1-\overline{x}_j))
\right]|\overline{x}_1-\overline{x}_j|
\nonumber\\
=&-\frac{B_0(N-2)}{\Lambda^{N-1}|\overline{x}_1-\overline{x}_j|^{N-2}}
+O\left(\frac{|\overline{x}_1-\overline{x}_j|}{(\Lambda|\overline{x}_1-\overline{x}_j|)^{N+1}}\right)
\nonumber\\
&+\int_{\mathbb{R}^N}U_{0,1}^{q}(z)\Bigg\{
\frac{-b_{N,p}(N-2)}{|z+\Lambda(\overline{x}_1-\overline{x}_j)|^{N-1}}
+\frac{b_{N,p}(N-2)}{|\Lambda(\overline{x}_1-\overline{x}_j)|^{N-1}}
+O\left(\frac{1}{|z+\Lambda(\overline{x}_1-\overline{x}_j)|^{N+1}}\right)
\nonumber\\
&+O\left(\frac{1}{|\Lambda(\overline{x}_1-\overline{x}_j)|^{N+1}}\right)
\Bigg\}
\nonumber\\
=&-\frac{B_0(N-2)}{\Lambda^{N-1}|\overline{x}_1-\overline{x}_j|^{N-2}}
+O\left(\frac{1}{|\Lambda(\overline{x}_1-\overline{x}_j)|^{N-1}}\right).
\end{align*}
Similarly, for $j=1,\cdots,k$,
\begin{align*}
\frac{\partial }{\partial \Lambda}\int_{\mathbb{R}^N}U_{\overline{x}_1,\Lambda}^{q}U_{\underline{x}_j,\Lambda}
=-\frac{B_0(N-2)}{\Lambda^{N-1}|\overline{x}_1-\underline{x}_j|^{N-2}}
+O\left(\frac{1}{|\Lambda(\overline{x}_1-\underline{x}_j)|^{N-1}}\right).
\end{align*}
Therefore, it follows from \cite[Lemma A.1]{duanmw} that
\begin{align*}
\frac{\partial }{\partial \Lambda}\int_{\mathbb{R}^N}U_{\overline{x}_1,\Lambda}^{q}\left(\sum_{j=2}^kU_{\overline{x}_j,\Lambda}
+\sum_{j=1}^kU_{\underline{x}_j,\Lambda}\right)
=&\sum_{j=2}^k\left(-\frac{B_0(N-2)}{\Lambda^{N-1}|\overline{x}_1-\overline{x}_j|^{N-2}}
+O\left(\frac{1}{|\Lambda(\overline{x}_1-\overline{x}_j)|^{N-1}}\right)\right)
\nonumber\\
&+\sum_{j=1}^k\left(-\frac{B_0(N-2)}{\Lambda^{N-1}|\overline{x}_1-\underline{x}_j|^{N-2}}
+O\left(\frac{1}{|\Lambda(\overline{x}_1-\underline{x}_j)|^{N-1}}\right)\right)
\nonumber\\
=&\frac{k(N-2)}{\Lambda^{N-1}}\left[\frac{B_4k^{N-2}}{(r\sqrt{1-h^2})^{N-2}}
+\frac{B_5k}{r^{N-2}h^{N-3}\sqrt{1-h^2}}\right]
\nonumber\\
&
+kO\left(\frac{1}{k^{\frac{m(N-2)}{N-2-m}+\theta}}\right).
\end{align*}
Then combining above identity and (\ref{ilamdaw}), we complete the proof.
\end{proof}

\begin{proposition}\label{iwh}
We have
\begin{align*}
\frac{\partial I(W_1,W_2)}{\partial h}
=&-\frac{k}{\Lambda^{N-2}}\left[\frac{(N-2)B_4k^{N-2}h}{r^{N-2}(\sqrt{1-h^2})^{N}}
-\frac{(N-3)B_5k}{r^{N-2}h^{N-2}\sqrt{1-h^2}}\right]
\nonumber\\
&+kO\left(\frac{1}{k^{\frac{m(N-2)}{N-2-m}+\frac{N-3}{N-1}+\theta}}\right),
\end{align*}
where $B_4$, $B_5$, $m$, $\theta$ are same positive constants in Proposition \ref{iw}.
\end{proposition}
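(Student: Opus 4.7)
My plan is to mirror the proof of Proposition \ref{iwlamda}. Starting from (\ref{i1}) and differentiating, the $h$-independent piece $2k\int V_{0,1}^{p+1}$ drops out, leaving
$$\partial_h I(W_1,W_2) = 2k\,\partial_h\!\int V_{\overline x_1,\Lambda}^p\Big(\sum_{j\ge 2}V_{\overline x_j,\Lambda}+\sum_{j=1}^k V_{\underline x_j,\Lambda}\Big) - \frac{1}{p+1}\!\int K_1\partial_h W_2^{p+1} - \frac{1}{q+1}\!\int K_2\partial_h W_1^{q+1}.$$
On each $\Omega_1^+$ I Taylor-expand $\partial_h W_1^{q+1} = \partial_h U_{\overline x_1,\Lambda}^{q+1} + (q+1)\partial_h\!\left(U_{\overline x_1,\Lambda}^q\sum U\right) + R$ with the same Young-type remainder bound used in the proof of Proposition \ref{iw}, and split each $K_i$-integral into a single-bubble piece, a bilinear interaction piece, and a remainder of size $O((k/r)^{N-\varepsilon_0})$; the $W_2^{p+1}$ integral is treated symmetrically.

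The key simplification relative to the $\Lambda$-derivative case is that the single-bubble integrals $\int K_2(|y|/\mu)U_{\overline x_1,\Lambda}^{q+1}$ and $\int K_1(|y|/\mu)V_{\overline x_1,\Lambda}^{p+1}$ are \emph{identically} independent of $h$. By the radial symmetry of $K_i$ and of $U_{0,1},V_{0,1}$, each depends on $\overline x_1$ only through $|\overline x_1|=r$, which is fixed as $h$ varies, since the bubble centers slide tangentially along the sphere of radius $r$. This explains the absence from Proposition \ref{iwh} of any term analogous to $A_2/(\mu^{m_2}\Lambda^{m_2})$ or $\bar A_2/(\mu^{m_1}\Lambda^{m_1})$, which appeared in Proposition \ref{iwlamda} precisely because the corresponding integrals depend on $\Lambda$.

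The leading $h$-dependence therefore comes entirely from the bilinear interaction. Using Lemma \ref{a1} together with the elementary identities
$$\partial_h|\overline x_1-\overline x_j|^2 = -8r^2 h\sin^2\!\tfrac{\theta_j}{2},\qquad \partial_h|\overline x_1-\underline x_j|^2 = 8r^2 h\cos^2\!\tfrac{\theta_j}{2},\qquad \theta_j=\tfrac{2(j-1)\pi}{k},$$
and the closed-form asymptotics $\sum_{j\ge 2}|\overline x_1-\overline x_j|^{-(N-2)} = B_1 k^{N-2}/(r\sqrt{1-h^2})^{N-2}+\text{l.o.}$ and $\sum_{j=1}^k|\overline x_1-\underline x_j|^{-(N-2)} = B_2 k/(r^{N-2}h^{N-3}\sqrt{1-h^2})+\text{l.o.}$ of \cite[Lemma~A.1]{duanmw}, direct differentiation produces the claimed leading terms $-(N-2)B_4 k^{N-1} h/[\Lambda^{N-2}r^{N-2}(\sqrt{1-h^2})^N]$ and $(N-3)B_5 k^2/[\Lambda^{N-2}r^{N-2}h^{N-2}\sqrt{1-h^2}]$. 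The main obstacle is the subleading bookkeeping: the piece $B_5 k^2 h^{4-N}(1-h^2)^{-3/2}/r^{N-2}$ coming from $\partial_h(1-h^2)^{-1/2}$, together with the $K_i-1$ corrections to the interaction (estimated exactly as in Proposition \ref{iw}) and the remainder $R$, must all be absorbed into $kO(k^{-m(N-2)/(N-2-m)-(N-3)/(N-1)-\theta})$. This closes because each $\partial_h$ inflates the corresponding Proposition \ref{iw} error by at most the inverse $h$-scale $k^{(N-3)/(N-1)}$, and the structural conditions (\ref{p+})--(\ref{m+}) were tuned to leave exactly this amount of slack.
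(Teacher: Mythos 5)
Your proposal is correct and follows essentially the same route as the paper's proof: the same decomposition into the bilinear interaction plus the two $K_i$-integrals, Taylor expansion of $W_i^{q+1}$ on $\Omega_1^+$, and extraction of the leading terms by differentiating the pairwise interactions $|\overline{x}_1-\overline{x}_j|^{-(N-2)}$, $|\overline{x}_1-\underline{x}_j|^{-(N-2)}$ and summing via \cite[Lemma A.1]{duanmw}. Your radial-symmetry observation explaining why the single-bubble pieces contribute no $A_2$-type terms is exactly the cancellation the paper uses (there it appears as $\int_{\mathbb{R}^N}\partial_h U_{\overline{x}_1,\Lambda}^{q+1}=0$ together with the vanishing of $\partial_h\int||y|-\mu|^{m_2}U_{\overline{x}_1,\Lambda}^{q+1}$), stated more transparently.
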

\begin{proof}
We have
\begin{align}\label{h123}
\frac{\partial I(W_1,W_2)}{\partial h}
=&\frac{\partial }{\partial h}\int_{\mathbb{R}^N}\nabla W_1\nabla W_2
-\Bigg(\frac{1}{p+1}\frac{\partial }{\partial h}\int_{\mathbb{R}^N}K_1\left(\frac{|y|}{\mu}\right)|W_2|^{p+1}
\nonumber\\
&+\frac{1}{q+1}\frac{\partial }{\partial h}\int_{\mathbb{R}^N}K_2\left(\frac{|y|}{\mu}\right)|W_1|^{q+1}\Bigg)
\nonumber\\
=&2k\frac{\partial }{\partial h}\int_{\mathbb{R}^N}V_{\overline{x}_1,\Lambda}^{p}\left(\sum_{j=2}^kV_{\overline{x}_j,\Lambda}
+\sum_{j=1}^kV_{\underline{x}_j,\Lambda}\right)
\nonumber\\
&-\frac{1}{p+1}\int_{\mathbb{R}^N}K_1\left(\frac{|y|}{\mu}\right)\frac{\partial (W_2^{p+1})}{\partial h}
-\frac{1}{q+1}\int_{\mathbb{R}^N}K_2\left(\frac{|y|}{\mu}\right)\frac{\partial (W_1^{q+1})}{\partial h}
\nonumber\\
:=&H_1-H_2-H_3.
\end{align}
For $y\in \Omega_1^+$,
\begin{align*}
&\left|\frac{\partial }{\partial \Lambda}\left[W_1^{q+1}-U_{\overline{x}_1,\Lambda}^{q+1}-
(q+1)U_{\overline{x}_1,\Lambda}^{q}\left(\sum_{j=2}^kU_{\overline{x}_j,\Lambda}
+\sum_{j=1}^kU_{\underline{x}_j,\Lambda}\right)\right]
\right|
\nonumber\\
\leq &
\begin{cases}
CrU_{\overline{x}_1,\Lambda}^{\frac{q+1}{2}}\left(\sum_{j=2}^kU_{\overline{x}_j,\Lambda}
+\sum_{j=1}^kU_{\underline{x}_j,\Lambda}\right)^{\frac{q+1}{2}},\ & \text{ if } q+1\leq 3,\\
Cr\left[U_{\overline{x}_1,\Lambda}^{\frac{q+1}{2}}\left(\sum_{j=2}^kU_{\overline{x}_j,\Lambda}
+\sum_{j=1}^kU_{\underline{x}_j,\Lambda}\right)^{\frac{q+1}{2}}
+U_{\overline{x}_1,\Lambda}^{q-1}\left(\sum_{j=2}^kU_{\overline{x}_j,\Lambda}
+\sum_{j=1}^kU_{\underline{x}_j,\Lambda}\right)^2\right], \ & \text{ if } q+1>3.
\end{cases}
\end{align*}
We have that for $q+1>3$, similar to Proposition \ref{iw},
\begin{align*}
H_3=&\frac{2k}{q+1}\int_{\Omega_1^+}K_2\left(\frac{|y|}{\mu}\right)\frac{\partial (W_1^{q+1})}{\partial h}
\nonumber\\
=&\frac{2k}{q+1}\int_{\Omega_1^+}K_2\left(\frac{|y|}{\mu}\right)\frac{\partial}{\partial h}\Bigg\{U_{\overline{x}_1,\Lambda}^{q+1}+(q+1)U_{\overline{x}_1,\Lambda}^{q}\left(\sum_{j=2}^kU_{\overline{x}_j,\Lambda}
+\sum_{j=1}^kU_{\underline{x}_j,\Lambda}\right)
\Bigg\}
\nonumber\\
&+krO\left(\int_{\Omega_1^+}K_2\left(\frac{|y|}{\mu}\right)U_{\overline{x}_1,\Lambda}^{\frac{q+1}{2}}\left(\sum_{j=2}^kU_{\overline{x}_j,\Lambda}
+\sum_{j=1}^kU_{\underline{x}_j,\Lambda}\right)^{\frac{q+1}{2}}
\right)
\nonumber\\
&+krO\left(\int_{\Omega_1^+}K_2\left(\frac{|y|}{\mu}\right)U_{\overline{x}_1,\Lambda}^{q-1}\left(\sum_{j=2}^kU_{\overline{x}_j,\Lambda}
+\sum_{j=1}^kU_{\underline{x}_j,\Lambda}\right)^2
\right)
\nonumber\\
=&\frac{2k}{q+1}\int_{\Omega_1^+}K_2\left(\frac{|y|}{\mu}\right)\frac{\partial}{\partial h}
\Bigg\{U_{\overline{x}_1,\Lambda}^{q+1}+(q+1)U_{\overline{x}_1,\Lambda}^{q}\left(\sum_{j=2}^kU_{\overline{x}_j,\Lambda}
+\sum_{j=1}^kU_{\underline{x}_j,\Lambda}\right)
+k^2O\left(\left(\frac{k}{r}\right)^{N-\varepsilon_0}\right).
\end{align*}
Since
\begin{align*}
\frac{2k}{q+1}\int_{\Omega_1^+}K_2\left(\frac{|y|}{\mu}\right)\frac{\partial U_{\overline{x}_1,\Lambda}^{q+1}}{\partial h}
=&\frac{2k}{q+1}\int_{\Omega_1^+}\frac{\partial U_{\overline{x}_1,\Lambda}^{q+1}}{\partial h}
+\frac{2k}{q+1}\int_{\Omega_1^+}\left(K_2\left(\frac{|y|}{\mu}\right)-1\right)\frac{\partial U_{\overline{x}_1,\Lambda}^{q+1}}{\partial h}
\nonumber\\
=&\frac{2k}{q+1}\int_{\mathbb{R}^N}\frac{\partial U_{\overline{x}_1,\Lambda}^{q+1}}{\partial h}
+\frac{2k}{q+1}\int_{\mathbb{R}^N\setminus \Omega_1^+}\frac{\partial U_{\overline{x}_1,\Lambda}^{q+1}}{\partial h}
\nonumber\\
&+\frac{2k}{q+1}\int_{\Omega_1^+}\left(K_2\left(\frac{|y|}{\mu}\right)-1\right)\frac{\partial U_{\overline{x}_1,\Lambda}^{q+1}}{\partial h}
\nonumber\\
=&\frac{2k}{q+1}\int_{\mathbb{R}^N\setminus \Omega_1^+}\frac{\partial U_{\overline{x}_1,\Lambda}^{q+1}}{\partial h}
+\frac{2k}{q+1}\int_{\Omega_1^+}\left(K_2\left(\frac{|y|}{\mu}\right)-1\right)\frac{\partial U_{\overline{x}_1,\Lambda}^{q+1}}{\partial h},
\end{align*}
where
\begin{align*}
\int_{\mathbb{R}^N\setminus \Omega_1^+}\frac{\partial U_{\overline{x}_1,\Lambda}^{q+1}}{\partial h}
\leq C\left(\frac{k}{r}\right)^{N+1-\varepsilon_0}\int_{\mathbb{R}^N\setminus \Omega_1^+}
\frac{r}{(1+|y-\overline{x}_1|)^{(q+1)(N-2)-(N+1-\varepsilon_0)}}
=kO\left(\left(\frac{k}{r}\right)^{N-\varepsilon_0}\right).
\end{align*}
Moreover,
\begin{align*}
\int_{\Omega_1^+}\left(K_2\left(\frac{|y|}{\mu}\right)-1\right)\frac{\partial U_{\overline{x}_1,\Lambda}^{q+1}}{\partial h}
=\left\{\int_{\Omega_1^+\cap \{y:|\frac{|y|}{\mu}-1|\geq \delta\}}
+\int_{\Omega_1^+\cap \{y:|\frac{|y|}{\mu}-1|\leq \delta\}}\right\}
\left(K_2\left(\frac{|y|}{\mu}\right)-1\right)\frac{\partial U_{\overline{x}_1,\Lambda}^{q+1}}{\partial h}.
\end{align*}
For $|\frac{|y|}{\mu}-1|\geq \delta$, we have $|y-\overline{x}_1|\geq ||y|-\mu|-|\mu-|\overline{x}_1||\geq \frac{1}{2}\delta\mu$. Furthermore,
\begin{align*}
&\int_{\Omega_1^+\cap \{y:|\frac{|y|}{\mu}-1|\geq \delta\}}
\left(K_2\left(\frac{|y|}{\mu}\right)-1\right)\frac{\partial U_{\overline{x}_1,\Lambda}^{q+1}}{\partial h}
\nonumber\\
\leq &C\int_{\Omega_1^+\cap {y:|\frac{|y|}{\mu}-1|\geq \delta}}
\frac{r}{(1+|y-\overline{x}_1|)^{q(N-2)+(N-1)}}
\nonumber\\
\leq &C\int_{\Omega_1^+\cap {y:|\frac{|y|}{\mu}-1|\geq \delta}}\frac{1}{\mu^{N+1-\varepsilon_0}}
\frac{r}{(1+|y-\overline{x}_1|)^{q(N-2)+(N-1)-(N+1-\varepsilon_0)}}
=kO\left(\left(\frac{k}{r}\right)^{N-\varepsilon_0}\right).
\end{align*}
If $|\frac{|y|}{\mu}-1|\leq \delta$, it follows from the condition $(K_2)$ that
\begin{align*}
&\int_{\Omega_1^+\cap \{y:|\frac{|y|}{\mu}-1|\leq \delta\}}
\left(K_2\left(\frac{|y|}{\mu}\right)-1\right)\frac{\partial U_{\overline{x}_1,\Lambda}^{q+1}}{\partial h}
\nonumber\\
=&-\frac{c_2}{\mu^{m_2}}\int_{\Omega_1^+\cap \{y:|\frac{|y|}{\mu}-1|\leq \delta\}}||y|-\mu|^{m_2}
\frac{\partial U_{\overline{x}_1,\Lambda}^{q+1}}{\partial h}
+O\left(\frac{1}{\mu^{m_2+\theta_2}}\int_{\Omega_1^+\cap \{y:|\frac{|y|}{\mu}-1|\leq \delta\}}||y|-\mu|^{m_2+\theta_2}
\frac{\partial U_{\overline{x}_1,\Lambda}^{q+1}}{\partial h}
\right)
\nonumber\\
=&-\frac{c_2}{\mu^{m_2}}\int_{\mathbb{R}^N}||y|-\mu|^{m_2}
\frac{\partial U_{\overline{x}_1,\Lambda}^{q+1}}{\partial h}
+O\left(\int_{\mathbb{R}^N\setminus B_{\frac{r}{k}}(\overline{x}_1)}\left(\left(\frac{|y|}{\mu}\right)^{m_2}+1\right)
\frac{\partial U_{\overline{x}_1,\Lambda}^{q+1}}{\partial h}
\right)
\nonumber\\
&
+O\left(\frac{1}{\mu^{m_2+\theta_2}}\int_{\Omega_1^+\cap \{y:|\frac{|y|}{\mu}-1|\leq \delta\}}||y|-\mu|^{m_2+\theta_2}
\frac{\partial U_{\overline{x}_1,\Lambda}^{q+1}}{\partial h}
\right)
=kO\left(\left(\frac{k}{\mu}\right)^{N-\varepsilon_0}\right).
\end{align*}
Consequently,
\begin{align*}
\int_{\Omega_1^+}\left(K_2\left(\frac{|y|}{\mu}\right)-1\right)\frac{\partial U_{\overline{x}_1,\Lambda}^{q+1}}{\partial h}=kO\left(\left(\frac{k}{\mu}\right)^{N-\varepsilon_0}\right).
\end{align*}
Furthermore,
\begin{align*}
\frac{2k}{q+1}\int_{\Omega_1^+}K_2\left(\frac{|y|}{\mu}\right)\frac{\partial U_{\overline{x}_1,\Lambda}^{q+1}}{\partial h}
=k^2O\left(\left(\frac{k}{\mu}\right)^{N-\varepsilon_0}\right).
\end{align*}
In the same way, we also get
\begin{align*}
&\int_{\Omega_1^+}K_2\left(\frac{|y|}{\mu}\right)\frac{\partial}{\partial h}\Bigg\{
U_{\overline{x}_1,\Lambda}^{q}\left(\sum_{j=2}^kU_{\overline{x}_j,\Lambda}
+\sum_{j=1}^kU_{\underline{x}_j,\Lambda}\right)
\Bigg\}
\nonumber\\
=&\int_{\Omega_1^+}\frac{\partial}{\partial h}\Bigg\{
U_{\overline{x}_1,\Lambda}^{q}\left(\sum_{j=2}^kU_{\overline{x}_j,\Lambda}
+\sum_{j=1}^kU_{\underline{x}_j,\Lambda}\right)
\Bigg\}
\nonumber\\
&+\int_{\Omega_1^+}\left(K_2\left(\frac{|y|}{\mu}\right)-1\right)\frac{\partial}{\partial h}\Bigg\{
U_{\overline{x}_1,\Lambda}^{q}\left(\sum_{j=2}^kU_{\overline{x}_j,\Lambda}
+\sum_{j=1}^kU_{\underline{x}_j,\Lambda}\right)
\Bigg\}
\nonumber\\
=&\int_{\mathbb{R}^N}\frac{\partial}{\partial h}\Bigg\{
U_{\overline{x}_1,\Lambda}^{q}\left(\sum_{j=2}^kU_{\overline{x}_j,\Lambda}
+\sum_{j=1}^kU_{\underline{x}_j,\Lambda}\right)
\Bigg\}
\nonumber\\
&+\int_{\mathbb{R}^N\setminus \Omega_1^+}\frac{\partial}{\partial h}\Bigg\{
U_{\overline{x}_1,\Lambda}^{q}\left(\sum_{j=2}^kU_{\overline{x}_j,\Lambda}
+\sum_{j=1}^kU_{\underline{x}_j,\Lambda}\right)
\Bigg\}
\nonumber\\
&+\int_{\Omega_1^+}\left(K_2\left(\frac{|y|}{\mu}\right)-1\right)\frac{\partial}{\partial h}\Bigg\{
U_{\overline{x}_1,\Lambda}^{q}\left(\sum_{j=2}^kU_{\overline{x}_j,\Lambda}
+\sum_{j=1}^kU_{\underline{x}_j,\Lambda}\right)
\Bigg\}
\nonumber\\
=&\int_{\mathbb{R}^N}\frac{\partial}{\partial h}\Bigg\{
U_{\overline{x}_1,\Lambda}^{q}\left(\sum_{j=2}^kU_{\overline{x}_j,\Lambda}
+\sum_{j=1}^kU_{\underline{x}_j,\Lambda}\right)
\Bigg\}
+kO\left(\left(\frac{k}{r}\right)^{N-\varepsilon_0}\right)
\nonumber\\
&+\int_{\mathbb{R}^N\setminus \Omega_1^+}\frac{\partial}{\partial h}\Bigg\{
U_{\overline{x}_1,\Lambda}^{q}\left(\sum_{j=2}^kU_{\overline{x}_j,\Lambda}
+\sum_{j=1}^kU_{\underline{x}_j,\Lambda}\right)
\Bigg\}.
\end{align*}

From the fact that $|y-\overline{x}_1|\geq C\frac{\mu}{k}$ for $y\in \mathbb{R}^N\setminus\Omega_1^+$,
\begin{align*}
&\frac{\partial}{\partial h}\int_{\mathbb{R}^N\setminus \Omega_1^+}U_{\overline{x}_1,\Lambda}^{q}U_{\overline{x}_j,\Lambda}
\nonumber\\
=&q\int_{\mathbb{R}^N\setminus \Omega_1^+}U_{\overline{x}_1,\Lambda}^{q-1}\frac{\partial U_{\overline{x}_1,\Lambda}}{\partial h}U_{\overline{x}_j,\Lambda}
+\int_{\mathbb{R}^N\setminus \Omega_1^+}U_{\overline{x}_1,\Lambda}^{q}\frac{\partial U_{\overline{x}_j,\Lambda}}{\partial h}
\nonumber\\
\leq &C\int_{\mathbb{R}^N\setminus \Omega_1^+}\frac{r}{(1+|y-\overline{x}_1|)^{(q-1)(N-2)+(N+1)}}
\frac{1}{(1+|y-\overline{x}_j|)^{N-2}}
\nonumber\\
&+C\int_{\mathbb{R}^N\setminus \Omega_1^+}\frac{1}{(1+|y-\overline{x}_1|)^{q(N-2)}}
\frac{r}{(1+|y-\overline{x}_j|)^{N-1}}
\nonumber\\
\leq &C\left(\frac{k}{\mu}\right)^{N+1-\varepsilon_0}\int_{\mathbb{R}^N\setminus \Omega_1^+}\frac{r}{(1+|y-\overline{x}_1|)^{q(N-2)-N+\varepsilon_0}}
\frac{1}{(1+|y-\overline{x}_j|)^{N-2}}
\nonumber\\
&+C\left(\frac{k}{\mu}\right)^{N+1-\varepsilon_0}\int_{\mathbb{R}^N\setminus \Omega_1^+}\frac{1}{(1+|y-\overline{x}_1|)^{q(N-2)-N-1+\varepsilon_0}}
\frac{r}{(1+|y-\overline{x}_j|)^{N-1}}
\nonumber\\
\leq &Ck\left(\frac{k}{\mu}\right)^{N-\varepsilon_0}\int_{\mathbb{R}^N\setminus \Omega_1^+}
\frac{1}{|\overline{x}_1-\overline{x}_j|^{\alpha}}\Bigg(\frac{1}{(1+|y-\overline{x}_1|)^{(q+1)(N-2)-N+\varepsilon_0-\alpha}}
\nonumber\\
&+\frac{1}{(1+|y-\overline{x}_j|)^{(q+1)(N-2)-N+\varepsilon_0-\alpha}}
\Bigg)
\nonumber\\
&+Ck\left(\frac{k}{\mu}\right)^{N-\varepsilon_0}\int_{\mathbb{R}^N\setminus \Omega_1^+}
\frac{1}{|\overline{x}_1-\overline{x}_j|^{\beta}}\Bigg(\frac{1}{(1+|y-\overline{x}_1|)^{(q+1)(N-2)-N+\varepsilon_0-\beta}}
\nonumber\\
&+\frac{1}{(1+|y-\overline{x}_j|)^{(q+1)(N-2)-N+\varepsilon_0-\beta}}
\Bigg)
=kO\left(\left(\frac{k}{\mu}\right)^{N-\varepsilon_0}\right),
\end{align*}
where for $q>\frac{N+2}{N-2}+\frac{N-2-m}{(N-2)^2}$, there exist $\alpha$, $\beta$ satisfying
\begin{align*}
\frac{N-2-m}{N-2}<\alpha<(q+1)(N-2)-2N,\ \frac{N-2-m}{N-2}<\beta<(q+1)(N-2)-2N,
\end{align*}
such that $(q+1)(N-2)-N+\varepsilon_0-\alpha>N$ and $(q+1)(N-2)-N+\varepsilon_0-\beta>N$.

Similarly,
$\frac{\partial}{\partial h}\int_{\mathbb{R}^N\setminus \Omega_1^+}U_{\overline{x}_1,\Lambda}^{q}U_{\underline{x}_j,\Lambda}=kO\left(\left(\frac{k}{\mu}\right)^{N-\varepsilon_0}\right)$ for $j=1,\cdots,k$. Hence,
\begin{align*}
\int_{\Omega_1^+}K_2\left(\frac{|y|}{\mu}\right)\frac{\partial}{\partial h}\Bigg\{
U_{\overline{x}_1,\Lambda}^{q}\left(\sum_{j=2}^kU_{\overline{x}_j,\Lambda}
+\sum_{j=1}^kU_{\underline{x}_j,\Lambda}\right)
\Bigg\}
=&\int_{\mathbb{R}^N}\frac{\partial}{\partial h}\Bigg\{
U_{\overline{x}_1,\Lambda}^{q}\left(\sum_{j=2}^kU_{\overline{x}_j,\Lambda}
+\sum_{j=1}^kU_{\underline{x}_j,\Lambda}\right)
\Bigg\}
\nonumber\\
&+kO\left(\left(\frac{k}{\mu}\right)^{N-\varepsilon_0}\right).
\end{align*}
In conclusion,
\begin{align}\label{h3}
H_3=-2k\int_{\mathbb{R}^N}\frac{\partial}{\partial h}\Bigg\{
U_{\overline{x}_1,\Lambda}^{q}\left(\sum_{j=2}^kU_{\overline{x}_j,\Lambda}
+\sum_{j=1}^kU_{\underline{x}_j,\Lambda}\right)
\Bigg\}
+k^2O\left(\left(\frac{k}{\mu}\right)^{N-\varepsilon_0}\right).
\end{align}
We can also get
\begin{align}\label{h2}
H_2=-2k\int_{\mathbb{R}^N}\frac{\partial}{\partial h}\Bigg\{
V_{\overline{x}_1,\Lambda}^{p}\left(\sum_{j=2}^kV_{\overline{x}_j,\Lambda}
+\sum_{j=1}^kV_{\underline{x}_j,\Lambda}\right)
\Bigg\}
+k^2O\left(\left(\frac{k}{\mu}\right)^{N-\varepsilon_0}\right).
\end{align}
Then from (\ref{h123}),
\begin{align}\label{h1234}
\frac{\partial I(W_1,W_2)}{\partial h}
=-2k\int_{\mathbb{R}^N}\frac{\partial}{\partial h}\Bigg\{
U_{\overline{x}_1,\Lambda}^{q}\left(\sum_{j=2}^kU_{\overline{x}_j,\Lambda}
+\sum_{j=1}^kU_{\underline{x}_j,\Lambda}\right)
\Bigg\}
+k^2O\left(\left(\frac{k}{\mu}\right)^{N-\varepsilon_0}\right).
\end{align}
Next, we estimate $\int_{\mathbb{R}^N}\frac{\partial}{\partial h}\Bigg\{
U_{\overline{x}_1,\Lambda}^{q}\left(\sum_{j=2}^kU_{\overline{x}_j,\Lambda}
+\sum_{j=1}^kU_{\underline{x}_j,\Lambda}\right)\Bigg\}$. For $j=2,\cdots, k$, we have
\begin{align*}
&\frac{\partial}{\partial h}\int_{\mathbb{R}^N}U_{\overline{x}_1,\Lambda}^{q}U_{\overline{x}_j,\Lambda}
\nonumber\\
=&\frac{\partial}{\partial h}\int_{\mathbb{R}^N}U_{0,1}^{q}(z)U_{0,1}(z+\Lambda(\overline{x}_1-\overline{x}_j))
\nonumber\\
=&\int_{\mathbb{R}^N}U_{0,1}^{q}(z)\frac{\partial U_{0,1}(z+\Lambda(\overline{x}_1-\overline{x}_j))}{\partial h}
\nonumber\\
=&\int_{\mathbb{R}^N}U_{0,1}^{q}(z)\frac{\partial U_{0,1}(\Lambda(\overline{x}_1-\overline{x}_j))}{\partial h}
+\int_{\mathbb{R}^N}U_{0,1}^{q}(z)\Bigg(\frac{\partial U_{0,1}(z+\Lambda(\overline{x}_1-\overline{x}_j))}{\partial h}
-\frac{\partial U_{0,1}(\Lambda(\overline{x}_1-\overline{x}_j))}{\partial h}
\Bigg)
\nonumber\\
=&\Lambda\int_{\mathbb{R}^N}U_{0,1}^{q}(z)U'_{0,1}(\Lambda(\overline{x}_1-\overline{x}_j))\frac{\partial |\overline{x}_1-\overline{x}_j|}{\partial h}
\nonumber\\
&+\int_{\mathbb{R}^N}U_{0,1}^{q}(z)\Bigg(\frac{\partial U_{0,1}(z+\Lambda(\overline{x}_1-\overline{x}_j))}{\partial h}
-\frac{\partial U_{0,1}(\Lambda(\overline{x}_1-\overline{x}_j))}{\partial h}
\Bigg).
\end{align*}
It follows from $|\overline{x}_1-\overline{x}_j|^2=4(1-h^2)r^2\sin^2r$ that
\begin{align*}
\frac{\partial |\overline{x}_1-\overline{x}_j|}{\partial h}
=\frac{1}{2}\frac{1}{|\overline{x}_1-\overline{x}_j|}\left(-8hr^2\sin^2r\right)
=-\frac{4hr^2\sin^2r}{|\overline{x}_1-\overline{x}_j|}.
\end{align*}
Then by the decay property of $U'$,
\begin{align*}
&\Lambda\int_{\mathbb{R}^N}U_{0,1}^{q}(z)U'_{0,1}(\Lambda(\overline{x}_1-\overline{x}_j))\frac{\partial |\overline{x}_1-\overline{x}_j|}{\partial h}
\nonumber\\
=&\Lambda\int_{\mathbb{R}^N}U_{0,1}^{q}(z)\left(\frac{-b_{N,p}(N-2)}{\Lambda^{N-1}|\overline{x}_1-\overline{x}_j|^{N-1}}
+O\left(\frac{1}{|\overline{x}_1-\overline{x}_j|^{N-1+\kappa_0}}\right)
\right)
\left(-\frac{4hr^2\sin^2r}{|\overline{x}_1-\overline{x}_j|}\right)
\nonumber\\
=&\int_{\mathbb{R}^N}U_{0,1}^{q}(z)\frac{b_{N,p}(N-2)}{\Lambda^{N-2}|\overline{x}_1-\overline{x}_j|^{N-2}}
\frac{4hr^2\sin^2r}{|\overline{x}_1-\overline{x}_j|^2}
+O\left(\frac{h}{(1-h^2)|\overline{x}_1-\overline{x}_j|^{N-2+\kappa_0}}\right)
\nonumber\\
=&\int_{\mathbb{R}^N}U_{0,1}^{q}(z)\frac{b_{N,p}(N-2)}{\Lambda^{N-2}|\overline{x}_1-\overline{x}_j|^{N-2}}
\frac{h}{1-h^2}
+O\left(\frac{h}{(1-h^2)|\overline{x}_1-\overline{x}_j|^{N-2+\kappa_0}}\right)
\nonumber\\
=&\int_{\mathbb{R}^N}U_{0,1}^{q}(z)\frac{b_{N,p}(N-2)}{\Lambda^{N-2}|\overline{x}_1-\overline{x}_j|^{N-2}}
\frac{h}{1-h^2}
+O\left(\frac{1}{k^{\frac{m(N-2)}{N-2-m}+\frac{N-3}{N-1}+\theta}}\right),
\end{align*}
where $b_{N,p}$ is defined in \cite[Lemma 2.2]{kimm}.

Using the mean value Theorem, we deduce that there exists a $t\in (0,1)$ such that
\begin{align*}
&\int_{\mathbb{R}^N}U_{0,1}^{q}(z)\frac{\partial}{\partial h}\Bigg(
U_{0,1}(z+\Lambda(\overline{x}_1-\overline{x}_j))
-U_{0,1}(\Lambda(\overline{x}_1-\overline{x}_j))
\Bigg)dz
\nonumber\\
=&\int_{\mathbb{R}^N}U_{0,1}^{q}(z)\frac{\partial}{\partial h}\Bigg(
U'_{0,1}(tz+\Lambda(\overline{x}_1-\overline{x}_j))\cdot z
\Bigg)dz
\nonumber\\
=&\int_{\mathbb{R}^N}U_{0,1}^{q}\left(\frac{s-\Lambda(\overline{x}_1-\overline{x}_j)}{t}\right)
\frac{\partial U'_{0,1}(s)}{\partial h}\frac{s-\Lambda(\overline{x}_1-\overline{x}_j)}{t}\frac{1}{t^N}ds
\nonumber\\
=&\int_{\mathbb{R}^N}U_{0,1}^{q}\left(\frac{s-\Lambda(\overline{x}_1-\overline{x}_j)}{t}\right)
\frac{\partial U'_{0,1}(s)}{\partial s}\frac{\partial s}{\partial h}
\frac{s-\Lambda(\overline{x}_1-\overline{x}_j)}{t}\frac{1}{t^N}ds
\nonumber\\
=&\int_{\mathbb{R}^N}U_{0,1}^{q}\left(\frac{s-\Lambda(\overline{x}_1-\overline{x}_j)}{t}\right)
\frac{\partial U'_{0,1}(s)}{\partial s}
\frac{4hr^2\cos^2(\frac{j-1}{k}\pi)}{\Lambda(\overline{x}_1-\overline{x}_j)}
\frac{s-\Lambda(\overline{x}_1-\overline{x}_j)}{t}\frac{1}{t^N}ds
\nonumber\\
\leq &C\int_{\mathbb{R}^N}U_{0,1}^{q}\left(\frac{s-\Lambda(\overline{x}_1-\overline{x}_j)}{t}\right)
\frac{4hr^2\cos^2(\frac{j-1}{k}\pi)}{(\Lambda(\overline{x}_1-\overline{x}_j))^{N+1}}
\frac{s-\Lambda(\overline{x}_1-\overline{x}_j)}{t}ds
\nonumber\\
\leq &C\left(\int_{|s|\geq 2\Lambda(\overline{x}_1-\overline{x}_j)}
+\int_{|s|\leq 2\Lambda(\overline{x}_1-\overline{x}_j)}\right)
U_{0,1}^{q}\left(\frac{s-\Lambda(\overline{x}_1-\overline{x}_j)}{t}\right)
\frac{4hr^2\cos^2(\frac{j-1}{k}\pi)}{(\Lambda(\overline{x}_1-\overline{x}_j))^{N+1}}
\frac{s-\Lambda(\overline{x}_1-\overline{x}_j)}{t}ds
\nonumber\\
=&O\left(\frac{hr^2\cos^2(\frac{j-1}{k}\pi)}{(\Lambda(\overline{x}_1-\overline{x}_j))^{N+1}}
\frac{1}{(\Lambda(\overline{x}_1-\overline{x}_j))^{q(N-2)-N-1}}
\right)
\nonumber\\
=&O\left(\frac{hr^2}{(\Lambda(\overline{x}_1-\overline{x}_j))^{q(N-2)}}
\right)
=O\left(k\left(\frac{k}{r}\right)^{N-\varepsilon_0}hk\left(\frac{k}{r}\right)^{q(N-2)-2+\varepsilon_0-N}
\right)
=O\left(k\left(\frac{k}{r}\right)^{N-\varepsilon_0}\right).
\end{align*}
Here we use the fact that $q\geq \frac{N+2}{N-2}+\frac{2(N-2-m)}{m(N-1)(N-2)}-\frac{\varepsilon_0}{N-2}$. Consequently, Using the fact that
\begin{align*}
m\in
\begin{cases}
[2,N-2), \ & \text{ if } N=5, 6,\\
(\frac{(N-2)^2}{2N-3},N-2), \ & \text{ if } N\geq 7,
\end{cases}
\end{align*}
we get
\begin{align*}
\frac{\partial}{\partial h}\int_{\mathbb{R}^N}U_{\overline{x}_1,\Lambda}^{q}U_{\overline{x}_j,\Lambda}
=&\int_{\mathbb{R}^N}U_{0,1}^{q}(z)\frac{b_{N,p}(N-2)}{\Lambda^{N-2}|\overline{x}_1-\overline{x}_j|^{N-2}}
\frac{h}{1-h^2}
+O\left(\frac{1}{k^{\frac{m(N-2)}{N-2-m}+\frac{N-3}{N-1}+\theta}}\right)
\nonumber\\
&+O\left(k\left(\frac{k}{r}\right)^{N-\varepsilon_0}\right)
\nonumber\\
=&\int_{\mathbb{R}^N}U_{0,1}^{q}(z)\frac{b_{N,p}(N-2)}{\Lambda^{N-2}|\overline{x}_1-\overline{x}_j|^{N-2}}
\frac{h}{1-h^2}
+O\left(\frac{1}{k^{\frac{m(N-2)}{N-2-m}+\frac{N-3}{N-1}+\theta}}\right).
\end{align*}
In the same way,
\begin{align*}
\frac{\partial}{\partial h}\int_{\mathbb{R}^N}U_{\overline{x}_1,\Lambda}^{q}U_{\underline{x}_j,\Lambda}
=\int_{\mathbb{R}^N}U_{0,1}^{q}(z)\frac{b_{N,p}(N-2)}{\Lambda^{N-2}|\overline{x}_1-\underline{x}_j|^{N-2}}
\frac{h}{1-h^2}
+O\left(\frac{1}{k^{\frac{m(N-2)}{N-2-m}+\frac{N-3}{N-1}+\theta}}\right).
\end{align*}
Hence, it follows from \cite[Lemma A.1]{duanmw} that
\begin{align*}
&\int_{\mathbb{R}^N}\frac{\partial}{\partial h}\Bigg\{
U_{\overline{x}_1,\Lambda}^{q}\left(\sum_{j=2}^kU_{\overline{x}_j,\Lambda}
+\sum_{j=1}^kU_{\underline{x}_j,\Lambda}\right)\Bigg\}
\nonumber\\
=&-\frac{k}{\Lambda^{N-2}}\left[\frac{(N-2)B_4k^{N-2}h}{r^{N-2}(\sqrt{1-h^2})^{N}}
-\frac{(N-3)B_5k}{r^{N-2}h^{N-2}\sqrt{1-h^2}}\right]
+O\left(\frac{1}{k^{\frac{m(N-2)}{N-2-m}+\frac{N-3}{N-1}+\theta}}\right).
\end{align*}
Then by (\ref{h1234}) and above equation, we conclude the proof.
\end{proof}

\end{document}